\documentclass[11pt]{article}
\usepackage[margin = 1 in]{geometry}

\usepackage{amsmath, amsthm, amssymb, mathrsfs, dsfont, bm, mathtools, xcolor, verbatim, bbm}
\usepackage[colorlinks = true, citecolor=blue]{hyperref}
\allowdisplaybreaks

\newtheorem{theorem}{Theorem}[section]
\newtheorem{lemma}[theorem]{Lemma}
\newtheorem{prop}[theorem]{Proposition}
\numberwithin{equation}{section}

\renewcommand{\P}{\mathbb{P}}					
\newcommand{\sP}{\mathsf{P}}					
\newcommand{\bP}{\mathbf{P}}					
\newcommand{\C}{\mathbb{C}}						
\newcommand{\cC}{\mathcal{C}}					
\newcommand{\Z}{\mathbb{Z}}						
\newcommand{\N}{\mathbb{N}}						
\newcommand{\E}{\mathbb{E}}						
\newcommand{\R}{\mathbb{R}}						
\newcommand{\D}{\mathcal{D}}					
\newcommand{\A}{\mathcal{A}}					
\newcommand{\bE}{\mathbf{E}}					
\newcommand{\tp}{\mathtt{p}}                    

\newcommand{\diag}{\mathrm{diag}}				
\newcommand{\dl}{\mathcal{L}}					
\newcommand{\dks}{\mathcal{KS}}					
\newcommand{\f}{\mathrm{F}}						
\newcommand{\op}{\mathrm{op}}					
\newcommand{\cE}{\mathcal{E}}				 	
\newcommand{\supp}{\mathrm{supp}}				
\newcommand{\ct}{\mathscr{T}}					
\newcommand{\cc}{\mathscr{C}}					

\newcommand{\M}{\text{\it \scalebox{.7}{M}}}
\renewcommand{\L}{\text{\it \scalebox{.7}{L}}}

\newcommand{\MK}{\text{\it \scalebox{.7}{M,K}}}
\newcommand{\ML}{\text{\it \scalebox{.7}{M,L}}}
\newcommand{\MLK}{\text{\it \scalebox{.7}{M,L,K}}}

\newcommand{\fH}{H^\M}
\newcommand{\sH}{H^\ML}
\newcommand{\tH}{H^\MLK}
\newcommand{\fD}{\Delta^\M}
\newcommand{\sD}{\Delta^\ML}
\newcommand{\tD}{\Delta^\MLK}
\newcommand{\dD}{\Delta^\text{\it \scalebox{.7}{M,K}}}

\newcommand{\lp}{\left(}						
\newcommand{\rp}{\right)}						
\newcommand{\bb}[1]{\Big\lbrace #1 \Big\rbrace}	

\newcommand{\1}{\mathds{1}}                             
\newcommand{\lz}{\ell^2(\mathbb{Z})}                    
\newcommand{\sgn}{\mathrm{sgn}}                         
\newcommand{\ed}{\stackrel{d}{=}}                       


\begin{document}
\title{Limiting eigenvalue distribution of heavy-tailed Toeplitz matrices}
\author{Ratul Biswas and Arnab Sen}
\date{}
\maketitle
\begin{abstract}
We consider an $N\times N$ random symmetric Toeplitz matrix with an i.i.d.\ input sequence drawn from a distribution that lies in the domain of attraction of an $\alpha$-stable law for $0 < \alpha < 2$. We show that under an appropriate scaling, its empirical eigenvalue distribution, as $N \to \infty$, converges weakly to a random symmetric probability distribution on $\R$, which can be described as the expected spectral measure of a certain random unbounded self-adjoint operator on $\ell^2(\Z)$. The limiting distribution turns out to be  almost surely subgaussian. Furthermore,  the support of the limiting distribution is bounded almost surely if $0<\alpha <1$ and is unbounded almost surely if  $1\leq \alpha <2$.
\end{abstract}

\section{Introduction}

Toeplitz matrices are ubiquitous in a variety of fields of mathematics and statistics including signal processing, numerical methods, machine learning, and time series. They have a rich, well-developed mathematical theory \cite{Bot2, Bot1}.  A (symmetric) Toeplitz matrix with i.i.d.\ input sequence provides a non-mean-field random matrix model which is important both from the theoretical and applied viewpoints. Despite this, the efforts to understand the spectral behavior of random Toeplitz matrices have received limited success so far, especially when compared to the classical mean-field models like Wigner matrices.

Answering a question posed in \cite{Bai2}, the article \cite{Bry} proved that the eigenvalues of a random Toeplitz matrix with input sequence having unit variance have a non-random limiting distribution that does not depend on the specific choice of the entry distribution, see also \cite{Ham, Bos2}. In \cite{Sen2},  it was shown, under the finite $(2+\varepsilon)$-moment assumption, that the maximum eigenvalue of the random Toeplitz matrix of size $n$, scaled by $\sqrt{n \log n}$, converges to a constant that is related to the $2\to4$ norm of operator norm of the sine kernel. The fluctuation of the linear statistics of the eigenvalues of a random Toeplitz for polynomial test functions is known to obey a CLT, see \cite{Cha, Liu}.

Obtaining finer spectral properties of the random Toeplitz matrices remains a mathematical challenge. Indeed, the resolvent analysis that can yield very precise results on the local and global spectral statistics for the mean-field models becomes ineffective in the Toeplitz model due to the lack of full independence and Toeplitz structure. The proof of the existence of the limiting eigenvalue distribution of random Toeplitz matrices in \cite{Bry} was based on the method of moments and as such the limiting measure does not have an explicit description. In fact, we know only very basic information about this distribution and  almost nothing is known beyond the fact that it has a bounded density which was shown in \cite{Sen1}. 

In the world of random matrices, the asymptotic spectral statistics do not generally depend on the choice of i.i.d.\ entry distribution under the finite variance assumption. So, it is a natural question to investigate what happens beyond finite variance, say when the entries are drawn i.i.d.\ from a heavy-tailed distribution with infinite variance. The case of Wigner matrices with heavy-tailed entries with tail index $\alpha \in (0, 2)$ (also, known as L\'evy matrices) is relatively well-studied. After the pioneering (non-rigorous) work \cite{Ciz}, it was shown rigorously in  \cite{Ben} using resolvent methods and in \cite{Zak} using the method of moments that the limiting eigenvalue distribution is a non-random symmetric distribution that only depends on $\alpha$. See also \cite{Bel} and \cite{Bor3}. The limiting distribution is also heavy-tailed (with unbounded support) with the same tail index. The eigenvectors of the Levy matrices have attracted a lot of attention lately \cite{Bor1, Bor2, Agg2, Agg1}. When $1< \alpha < 2$, the eigenvectors are known to be completely delocalized. For $0< \alpha<1$, the model exhibits Anderson localization-type phase transition, where the eigenvectors with eigenvalues (in magnitude) below a threshold, called the mobility edge, are completely delocalized and the eigenvectors with eigenvalues above the threshold are localized. 

In this article, we establish the limiting eigenvalue distribution of the heavy-tailed Toeplitz matrix. The limiting distribution of the symmetric circulant matrices with heavy-tailed entries was studied in \cite{Bos1}. For circulants, the eigenvalues are just the discrete Fourier transform of the first row. The eigenvalues of Toeplitz matrices, however, lack such explicit representation. We employ the standard method of moments, not on the original matrix but the one obtained by conjugating it with the discrete Fourier transform matrix. Since we are working with heavy-tailed entries, we truncate them at an appropriate level so that the truncated random variables have all finite moments. Our key observation is that after the Fourier conjugation, the heavy-tailed Toeplitz matrix is well-approximated by a random band matrix where the width of the band depends on the approximation level. This allows us to find a limiting random operator on $\ell^2(\Z)$, whose law only depends on $\alpha$, and the eigenvalue limiting distribution is connected to the spectral measure of this operator. 
In contrast to the light-tailed case, the limiting measure for heavy-tailed Toeplitz turns out to be random. The source of this randomness is the magnitudes and locations of the largest entries of the heavy-tailed random variables.
Interestingly, unlike the heavy-tailed Wigner matrix, the limiting measure is not heavy-tailed. In fact, it has a subgaussian tail almost surely. We hope that this limiting operator can further be analyzed to extract more information about the limiting measure.

Next, we describe our model. Let $(\Xi, \mathcal{F},\sP)$ be a probability space on which $(a_k)_{k\geq 0}$ are defined to be i.i.d.\ random variables whose distribution is in the domain of attraction of an $\alpha$-stable law for some $\alpha \in (0,2)$. It is known (\cite{Fel}, Theorem IX.8.1a) that there exists a function $g$ such that \begin{align} \label{eqn 1.1}
g(t) := \sP (|a_0|\geq t) = \frac{\ell(t)}{t^\alpha} \qquad \mathrm{as\; } t\to \infty,
\end{align}
where $\ell:\R_+ \to \R_+$ is a slowly varying function, i.e., it satisfies 
\begin{align*}
\lim_{t\to\infty} \frac{\ell(\beta t)}{\ell(t)} = 1 \qquad \mathrm{for\;all\;}\beta>0.
\end{align*}
We further assume that there exists $0\leq p \leq 1$ such that \begin{align} \label{eqn 1.2}
\lim_{t\to \infty} \frac{\sP ( a_0 \geq t )}{\sP (|a_0| \geq t)} = p \qquad \mathrm{and} \qquad \lim_{t\to \infty} \frac{\sP ( a_0 \leq -t)}{\sP( |a_0| \geq t)} = 1-p.
\end{align}
Define the normalizing constant 
\begin{align*}
c_N: = \inf \lbrace t: \sP(|a_0| \geq t ) \leq N^{-1} \rbrace.
\end{align*} 
It is known that there exists a slowly varying function $\ell_0$ such that $c_N = \ell_0(N) N^{1/\alpha}$.

We consider the $N \times N$  random symmetric Toeplitz matrix $T_N$  with i.i.d.\ heavy-tailed entries scaled by $c_N$ as follows:
 \begin{align*}
T_N=  c_N^{-1} \begin{pmatrix}
a_0 & a_1 & \ldots & a_{N-2} & a_{N-1}\\
a_1 & a_0 & a_1 & \ddots & a_{N-2}\\
\vdots & \ddots & \ddots & \ddots & \vdots\\
a_{N-2} & \ddots & \ddots &  \ddots & a_1\\
a_{N-1} & a_{N-2} & \ldots & a_1 & a_0
\end{pmatrix} = (b_{|k-l|})_{k,l \in [N]},
\end{align*}
where $b_k = c_N^{-1} a_k$ for $k \in [N] : = \{0, \ldots, N-1\}$. 
For a symmetric matrix $A$ of size $N\times N$, we denote by $\mu_A$ the empirical spectral distribution of $A$, i.e., \begin{align*}
\mu_A = \frac{1}{N}\sum_{j=0}^{N-1} \delta_{\lambda_j(A)}
\end{align*}
where $\lambda_0(A) \leq \ldots\leq \lambda_{N-1}(A)$ are the eigenvalues of the matrix $A$ and $\delta_x$ is the Dirac measure at the point $x$.

We are interested in finding the limit of  $\mu_{T_N}$ for a sequence of heavy-tailed Toeplitz matrices. The limit turns out to be a random probability measure, which arises as a spectral measure of a certain random operator on $\ell^2(\Z)$ at some unit vector. Below we describe this random operator. 

 Let $\Pi:\lz \to \lz$ be the projection operator obtained by composing the following operators
\begin{align*}
\Pi: \lz \xrightarrow{\mathfrak{F}}L^2(S^1) \xrightarrow{\1_{[0,1/2]}} L^2(S^1)\xrightarrow{\mathfrak{F}^{-1}}\lz
\end{align*}
where $\mathfrak{F}$ is the Fourier transform, i.e. for a vector $v = (v_n)_{n \in \Z}$, $(\mathfrak{F}v)(x) = \sum_{n \in \Z} v_n e^{ 2 \pi i n x}$  and $\1_{[0,1/2]}$ is the projection that acts by multiplication by the indicator function of the interval $[0,1/2]$. Here, as in the rest of the paper, we use $i$ to denote $\sqrt{-1}$ and never use it as an index. The action of $\Pi$ on the standard basis $(e_k)_{k\in \Z}$ is given by, see \cite{Sen2}, 
\begin{align} \label{eqn 1.3}
\Pi(k,l) : = \langle e_k, \Pi e_l\rangle = \begin{cases} 
\frac{1}{2}1; & \mathrm{\;if\;} k = l,\\
0; & \mathrm{\;if\;}k\neq l \mathrm{\;and\;}|k-l|\text{\;is even},\\
-\frac{i}{\pi(k-l)}; & \mathrm{\;if\;} |k-l|\text{\;is odd}.
\end{cases}
\end{align} 

Let $(\Upsilon, \mathcal{G},\mathbf{P})$ be a probability space on which $(\Gamma_j)_{j\geq 0}$ are the arrival times of a unit rate Poisson process on $(0,\infty)$,  $(\zeta_j)_{j\geq 0}$ are i.i.d.\ uniform on $[0, 1/2], $  and  $(U_j)_{j\geq 0}$ are i.i.d.\ uniform on $[0, 1]$, all sequences being independent each other. We denote the random element $ (\Gamma_j,\zeta_j)_{j\geq 0}$ by $\omega$, and think of $\omega$ as a ``random environment". 
We will use $\mathbf{P}^\omega$  and  $\mathbf{E}^\omega$   to denote the conditional probability and the conditional expectation  given $\omega$. In other words, in $\mathbf{E}^\omega$, we take expectation with respect to the randomness of  $(U_j)_{j \ge 0}$ only.

Below we list some full-measure outcomes for the environment $\omega$, measurable with respect to the Borel $\sigma$-algebra on  $\R^{\Z_+}_+ \times \R^{\Z_+}_+ $, 
 which we will use throughout the paper.
\begin{align*}
 \Omega_1 &= \{ (x, y)  \in \R^{\Z_+}_+ \times \R^{\Z_+}_+ : \lim_{ j \to \infty}  x_j/ j \to 1   \},  \\
 \Omega_2 &=  \{ (x, y)  \in \R^{\Z_+}_+ \times \R^{\Z_+}_+ : \text{each $y_j$ is irrational} \} \\
 \Omega_3 &=  \{ (x, y)  \in \R^{\Z_+}_+ \times \R^{\Z_+}_+ : y_0, y_1, y_2,  \ldots \text{ are rationally independent} \} \subseteq  \Omega_2, \\
 \Omega_0 &=   \Omega_1 \cap \Omega_3.
\end{align*}
By the strong law of large numbers, $\mathbf{P}(\omega \in \Omega_1) = 1$. On the other hand, for any $k \ge 1$, the set of $k$ rationally dependent real numbers has zero Lebesgue measure on $\R^k$. Therefore, $\mathbf{P}(\omega \in \Omega_3)  = \mathbf{P}(\omega \in \Omega_2) = 1$, which implies that $ \mathbf{P}(\omega \in \Omega_0) = 1$. 

For $k \in \Z$, let 
\begin{equation}\label{def:rho_k}
   \varrho_k =    \varrho_k^\omega =  2 \sum_{j=0}^\infty \Gamma_j^{-1/\alpha}\cos (2\pi (U_j  + k\zeta_j )). 
\end{equation}
For a given $\omega$, we view $ \varrho_k^\omega$ as a function of $U = (U_j)_{j \ge 0}$. 
It is easy to check (see Proposition \ref{prop 1.1}(a) below)  that for a fixed $\omega \in \Omega_1$,  the series in \eqref{def:rho_k} is convergent $\mathbf{P}^\omega$-almost surely.
Define $\Lambda = \Lambda^\omega = \mathrm{diag}((\varrho_k^\omega)_{k \in \Z})$ to be the (random) diagonal operator on $\ell^2(\Z)$ and set  
\[ \Delta = \Delta^\omega =  \Pi \Lambda^\omega \Pi : \ell^2(\Z) \to \ell^2(\Z), \]
where the above multiplication should be understood as the composition of operators. 
 For $ \alpha \in [1, 2)$,  the operators $\Lambda$ and $\Delta$  become  unbounded  almost surely, so we need to be careful about defining their domains of definition. 
 
 Note that for each $l$, the vector $\Pi e_l$ decays like $|(\Pi e_l)_k | \le  O_l((1+k^2)^{-1})$ for all $k$. It then follows from Proposition \ref{prop 1.1}(a) that for any fixed  $\omega \in \Omega_1$, we have  $\Lambda^\omega  \Pi e_l  \in \ell^2(\Z)$ $\bP^\omega$-almost surely.  Therefore,  for each $\omega \in \Omega_1$,
 \[ \mathbf{P}^\omega ( \Delta^\omega e_l \in \ell^2(\Z) \text{  for all  } l \in \Z ) =1.\]
 Let $\mathcal{C}$ be the set of all finitely supported vectors in $\ell^2(\Z)$, which is dense in $\ell^2(\Z)$. 
 By taking finite linear combinations of the standard basis vectors,  we can now extend the definition of $\Delta^\omega$ to $\mathcal{C}$, $\bP^\omega$-almost surely. Thus, for each $\omega \in \Omega_1$, 
 the operator $\Delta^\omega$ is densely defined on $\ell^2(\Z)$ $\bP^\omega$-almost surely.  Clearly, the operator $\Delta^\omega$ is Hermitian. In Proposition \ref{prop 1.1}(b), we will show that it is also self-adjoint $\bP^\omega$-almost surely.
 
\begin{prop} \label{prop 1.1}
For each $\omega \in \Omega_1$, the following statements hold $\bP^\omega$-almost surely. 
\begin{enumerate}
\item[(a)] For each $k \in \Z$, $\varrho^\omega_k $ is finite. Moreover,  $\sum_{k\in \Z} (1+k^2)^{-1} ( \varrho^\omega_k)^2 <\infty$.
\item[(b)] $\Delta^\omega$ is a self-adjoint operator on the domain $\D^\omega = \{ v \in \lz : \|\Delta^\omega v\|_2 < \infty\}$.
\end{enumerate}
\end{prop}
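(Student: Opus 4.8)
For part (a), fix $\omega \in \Omega_1$. Since $\Gamma_j/j \to 1$ and $2/\alpha > 1$, the constant $S_\omega := \sum_{j\ge 0}\Gamma_j^{-2/\alpha}$ is finite. Under $\bP^\omega$ the summands $X_j := 2\Gamma_j^{-1/\alpha}\cos(2\pi(U_j + k\zeta_j))$ of the series \eqref{def:rho_k} are independent, uniformly bounded (by $2\Gamma_j^{-1/\alpha}$), with $\bE^\omega X_j = 0$ — because $U_j$ is uniform on $[0,1]$, so $U_j + k\zeta_j$ is uniform modulo $1$ — and $\bE^\omega X_j^2 = 2\Gamma_j^{-2/\alpha}$. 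As $\sum_j \bE^\omega X_j^2 = 2S_\omega < \infty$, Kolmogorov's one-series theorem gives $\bP^\omega$-a.s.\ convergence of $\varrho_k^\omega$, and convergence also in $L^2(\bP^\omega)$, so $\bE^\omega[(\varrho_k^\omega)^2] = 2S_\omega$ for every $k$. A union bound over $k \in \Z$ shows all $\varrho_k^\omega$ are simultaneously finite a.s., and Tonelli's theorem gives $\bE^\omega\!\big[\sum_k (1+k^2)^{-1}(\varrho_k^\omega)^2\big] = 2S_\omega \sum_k (1+k^2)^{-1} < \infty$, whence the second claim. I expect this part to be routine.

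For part (b), if $\alpha \in (0,1)$ then $\sum_j \Gamma_j^{-1/\alpha} < \infty$ on $\Omega_1$, so $\sup_k |\varrho_k^\omega| \le 2\sum_j \Gamma_j^{-1/\alpha} < \infty$ and $\Delta^\omega = \Pi\Lambda^\omega\Pi$ is bounded Hermitian, so the claim holds with $\D^\omega = \lz$. Assume now $\alpha \in [1,2)$, so $\Lambda^\omega$ is a.s.\ unbounded. The plan is to show $\Delta^\omega$, viewed as the symmetric densely defined operator initially defined on $\cC$, is essentially self-adjoint, with the domain of its closure equal to $\D^\omega$. First, using $|\Pi(k,l)| \le \tfrac1\pi |k-l|^{-1}$ for $k \ne l$ together with part (a) and its routine refinement $\sum_k (1+k^2)^{-1+\varepsilon}(\varrho_k^\omega)^2 < \infty$ a.s.\ for every $\varepsilon \in (0,1)$ (proved exactly as above), one checks that the entrywise matrix product $\Pi\Lambda^\omega\Pi v$ converges for every $v \in \lz$, that the adjoint $(\Delta^\omega|_\cC)^*$ acts by $v \mapsto \Pi\Lambda^\omega\Pi v$, and that it has domain precisely $\D^\omega = \{v : \|\Pi\Lambda^\omega\Pi v\|_2 < \infty\}$; therefore it suffices to prove $\ker\big((\Delta^\omega|_\cC)^* \mp i\big) = \{0\}$. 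So suppose $w \in \D^\omega$ satisfies $\Pi\Lambda^\omega\Pi w = iw$. Applying $\Pi$ and using $\Pi^2 = \Pi$ (the rearrangement being licit via the decay of $\Pi$ and part (a)) gives $\Pi w = w$. Then, granting the interchange of summations, $\langle w, \Pi\Lambda^\omega\Pi w\rangle = \sum_k \varrho_k^\omega |(\Pi w)_k|^2 = \sum_k \varrho_k^\omega |w_k|^2 \in \R$, while the left side equals $\langle w, iw\rangle = i\|w\|_2^2$; hence $w = 0$. The eigenvalue $-i$ is handled identically.

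The crux — and the step I expect to be the main obstacle — is justifying that last interchange, equivalently showing $\sum_k |\varrho_k^\omega|\,|w_k| < \infty$ for $w$ in the deficiency subspace; by Cauchy--Schwarz and the refined bound above it suffices to know that such $w$ has a little polynomial decay, $\sum_k (1+k^2)^{1-\varepsilon}|w_k|^2 < \infty$. This is delicate precisely because $\Lambda^\omega$ is unbounded ($\sup_k|\varrho_k^\omega| = \infty$ a.s.) and $\Pi$ does not preserve $\mathcal D(\Lambda^\omega)$, so the inclusion $\D^\omega \subseteq \mathcal D(\Lambda^\omega)$ is not automatic. I would try to produce the needed decay by a Combes--Thomas/bootstrap argument: from $(\Pi\Lambda^\omega\Pi w)_k = iw_k$, separate the $m=k$ contribution (which yields $\tfrac14\varrho_k^\omega w_k$, the leading part of the diagonal of $\Delta^\omega$) and use that the spectral parameter $i$ stays at distance $\ge 1$ from the real diagonal; this expresses $w_k$ through a convolution of $(w_l)$ against the off-diagonal of $\Delta^\omega$, whose $O(|k-l|^{-1})$-type decay (weighted by the nearby $\varrho^\omega$'s) together with part (a) can be iterated to pump out decay of $w$. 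An alternative is to approximate $\Delta^\omega$ by the band operators $\Pi_K\Lambda^\omega\Pi_K$, where $\Pi_K$ is $\Pi$ truncated to bandwidth $K$, and to invoke a Carleman-type essential-self-adjointness criterion, available because $\varrho_k^\omega = o(|k|)$ (a consequence of part (a)). Either way, the sparse large values of $(\varrho_k^\omega)$ coming from the heavy tails are what make the bookkeeping the real work.
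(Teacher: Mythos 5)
Part (a) of your proposal is correct and matches the paper's argument in substance: both establish convergence of the series from $\sum_j\Gamma_j^{-2/\alpha}<\infty$ (you via Kolmogorov's one-series theorem and a direct variance computation, the paper via the two-series theorem plus an $L^2$-bounded-martingale bound), and both conclude the weighted square-summability from the uniform-in-$k$ bound $\E^\omega(\varrho_k^\omega)^2\le C(\omega)$ together with Tonelli.

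Part (b) has a genuine gap, and it is exactly the one you flag as the crux. Your route (essential self-adjointness of $\Delta|_{\cC}$ via vanishing deficiency indices) requires showing that a vector $w\in\D$ with $\Delta w=iw$ satisfies $\sum_k|\varrho_k^\omega|\,|(\Pi w)_k|^2<\infty$, i.e.\ some a priori decay of deficiency vectors. Neither of your proposed fixes is carried out, and both look doubtful for this operator: $\Delta=\Pi\Lambda\Pi$ has no finite bandwidth, so Carleman-type criteria do not apply directly, and its off-diagonal entries $\sum_m\Pi(k,m)\varrho_m^\omega\Pi(m,l)$ are weighted by the unbounded $\varrho^\omega$'s, which undermines the uniform off-diagonal decay a Combes--Thomas bootstrap would need. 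The paper avoids this issue entirely by working with the \emph{maximal} operator $(\Delta,\D)$ and running the standard multiplication-operator argument (\cite[Proposition VIII.1]{Ree}): for $v$ in the domain of the adjoint one computes $\|\Delta^* v\|_2=\lim_n\sup_{\|w\|_2=1}|\langle\Delta(\chi_n\circ w),v\rangle|=\|\Delta v\|_2$, where $\Delta v$ is the entrywise-defined sequence. The point is that every interchange of sums in that chain involves one \emph{finitely supported} vector $u$, for which $\Lambda\Pi u\in\lz$ by part (a) (since $|(\Pi e_l)_k|=O_l(|k-l|^{-1})$ and $\sum_k(1+k^2)^{-1}(\varrho_k^\omega)^2<\infty$), so Cauchy--Schwarz justifies everything; no decay of $v$ or of deficiency vectors is ever needed. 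So while your reduction of the adjoint's action and domain is essentially the first half of the paper's proof, the kernel computation you append to it is the step that does not go through as written, and the paper's argument shows it is also unnecessary.
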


Let $\mathcal{M}$ be the set of probability measures on $\R$. Define the L\'evy distance between two probability measures $\nu_1, \nu_2 \in \mathcal{M}$ as \begin{align*}
\dl(\nu_1, \nu_2) = \inf \{ \varepsilon >0 : \nu_1((-\infty, t-\varepsilon]) - \varepsilon \leq \nu_2((-\infty,t]) \leq \nu_1((-\infty, t+\varepsilon]) + \varepsilon \mathrm{\;for\;all\;} t\in \R\}.
\end{align*}
It is well known that $0 \le  \dl(\nu_1, \nu_2) \le 1$ and the space $\mathcal{M}$ equipped with the L\'evy distance is a complete separable metric space. 

A random probability measure on a measurable space $(\Sigma, \mathcal{S})$ is a measurable map $\mu: \Sigma \to \mathcal{M}$ with $\sigma \in \Sigma \mapsto \mu^\sigma \in \mathcal{M}$. We say that a sequence of random probability measures $(\mu_n)_{n\geq 1}$ on $\Xi$ converges weakly to the random probability measure $\mu_\infty$ on $\Upsilon$, and write $\mu_n \Rrightarrow \mu_\infty$, if for all bounded and continuous functions $\phi: \mathcal{M}\to \R$, \begin{align*}
\mathsf{E} \phi(\mu_n) \to \bE \phi(\mu_\infty)
\end{align*}
as $n \to \infty$, where $\mathsf{E}$ and $\bE$ denote the expectations with respect to the probability measures $\sP$ and $\bP$ respectively. We will use the notation $\Rightarrow$ to denote weak convergence of random variables or their laws and use $\ed$ to denote equality in distribution.

For  a self-adjoint operator $\Phi$  on  $\lz$ and a unit vector $v \in \lz$, we denote by $\nu_{\Phi,v}$ the spectral measure of the operator at $v$, i.e., $\nu_{\Phi,v}$ is the unique probability measure on $\R$ that satisfies \begin{align*}
\langle v, f(\Phi) v\rangle = \int_{\R}fd\nu_{\Phi,v},
\end{align*}
for any bounded measurable function $f:\R \to \R$. Alternatively, the probability measure $\nu_{\Phi,v}$ is described by its  Stieltjes transform:
\[ \int \frac{1}{x -z}\;d\nu_{\Phi,v}(x) = \langle v, (\Phi - z)^{-1} v\rangle, \qquad z \in \C\setminus \R.   \]
When $v = e_0$, we shall denote $\nu_{\Phi, v}$ simply by $\nu_\Phi$. Let us define
\[ \nu_\ct = \nu_\ct^\omega = \bE^\omega \nu_{\Delta, u},\]
 where the unit vector $u =\sqrt{2}  \Pi e_0$. Note that $\nu_\ct$ is  a random probability measure on $\Upsilon$. We are now ready to state our main result.

\begin{theorem} \label{main_thm} Fix $\alpha \in (0, 2)$. 
Let $(T_N)_{N \ge 1}$ be a sequence of heavy-tailed symmetric Toeplitz matrices whose entry distribution satisfies \eqref{eqn 1.1} and \eqref{eqn 1.2} and let $(\mu_{T_N})_{N \ge 1}$ be their empirical spectral distributions.  Then, as $N \to \infty$, 
\[ \mu_{T_N} \Rrightarrow \nu_\ct. \]
\end{theorem}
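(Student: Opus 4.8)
The plan is to combine a truncation step with the method of moments, the latter carried out after conjugating by the discrete Fourier matrix. Fix a small $\varepsilon>0$, set $\tilde a_k = a_k\1_{|a_k|\ge\varepsilon c_N}$, and let $\tilde T_N^{(\varepsilon)} = c_N^{-1}(\tilde a_{|k-l|})_{k,l\in[N]}$ (after a suitable recentring when $\alpha\ge1$, which changes $\tilde T_N^{(\varepsilon)}$ only by a rank-one matrix and so does not affect the limit). First I would show that truncation is harmless uniformly in $N$: for $0<\alpha<1$ one bounds $\|T_N - \tilde T_N^{(\varepsilon)}\|_\op \le 2\sum_k|b_k|\1_{|a_k|<\varepsilon c_N}$, which by Karamata's theorem concentrates around $\tfrac{2\alpha}{1-\alpha}\varepsilon^{1-\alpha}$; for $1\le\alpha<2$ one bounds instead $\mathsf{E}\big[\tfrac1N\mathrm{Tr}((T_N - \tilde T_N^{(\varepsilon)})^2)\big]\le\mathsf{E}\sum_k b_k^2\1_{|a_k|<\varepsilon c_N}$, which is $O(\varepsilon^{2-\alpha})$, and invokes the standard inequality $\dl(\mu_A,\mu_B)^3\le\tfrac1N\mathrm{Tr}((A-B)^2)$ together with Jensen. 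Either way, $\limsup_{N\to\infty}\mathsf{E}\,\dl\big(\mu_{T_N},\mu_{\tilde T_N^{(\varepsilon)}}\big)\le\delta(\varepsilon)$ with $\delta(\varepsilon)\to0$.

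Conjugation by the unitary $F_N$ preserves the spectrum, so $\mu_{\tilde T_N^{(\varepsilon)}} = \mu_{\hat T_N^{(\varepsilon)}}$ with $\hat T_N^{(\varepsilon)} = F_N^*\tilde T_N^{(\varepsilon)}F_N$, and $\tfrac1N\mathrm{Tr}\big((\tilde T_N^{(\varepsilon)})^p\big) = \tfrac1N\sum_j\langle e_j,(\hat T_N^{(\varepsilon)})^p e_j\rangle$. The heart of the argument is a structural description of $\hat T_N^{(\varepsilon)}$ in the spirit of \cite{Sen2}. Writing $\tilde T_N^{(\varepsilon)} = c_N^{-1}\sum_{|a_k|\ge\varepsilon c_N}a_k(J^k+J^{-k})$ with $J$ the non-cyclic forward shift on $\C^N$ and using $J^k = R^k - J^{-(N-k)}$ for the cyclic shift $R$, one separates a circulant (hence, after conjugation, diagonal) part from corner corrections; an explicit computation of the conjugated entries then shows that, around a bulk index $j_0$, after recentring $j\mapsto j-j_0$ and passing to the $\mathfrak F$--picture, $\hat T_N^{(\varepsilon)}$ converges to a hopping operator on $L^2(S^1)$ in which each large entry at rescaled location $t_l = k_l/N$ induces only the non-wrapping hops $x\mapsto x\pm t_l$ (the restriction to non-wrapping hops being precisely what distinguishes the Toeplitz from the circulant model), with weight $b_{k_l}\,e^{\pm 2\pi iU_l}$ where $U_l = j_0 t_l\bmod 1$. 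Rescaling $[0,1)$ to $[0,1/2)$ --- equivalently, exploiting the $j\leftrightarrow N-j$ symmetry of $\cos(2\pi jk/N)$ --- turns this into the operator $\Delta^{(\varepsilon)} = \Pi\Lambda^{(\varepsilon)}\Pi$ with the constant state $e_0$ becoming $u = \sqrt2\,\Pi e_0$, where $\Lambda^{(\varepsilon)} = \mathrm{diag}\big((\varrho_k^{(\varepsilon)})_{k\in\Z}\big)$ and $\varrho_k^{(\varepsilon)} = 2\sum_{l:\Gamma_l^{-1/\alpha}\ge\varepsilon}\Gamma_l^{-1/\alpha}\cos(2\pi(U_l+k\zeta_l))$. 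Here the magnitudes $\Gamma_l^{-1/\alpha}$ and the (folded) locations $\zeta_l\in[0,1/2]$ come from the classical convergence of the point process of large entries $\sum_{|a_k|\ge\varepsilon c_N}\delta_{(k/N,\,a_k/c_N)}$ to an $\alpha$-stable Poisson process, the sign of $a_k$ being absorbed into $U_l$ (shifting $U_l$ by $\tfrac12$ flips every cosine), which is why $p$ does not appear; the phases $U_l$ become independent and uniform on $[0,1]$ by averaging over the bulk index $j_0$, which is legitimate because, with probability tending to $1$, the random locations $k_l$ satisfy no relation $\sum_l m_l k_l\equiv 0\pmod N$ with bounded integer coefficients $m_l$ --- the finite-$N$ avatar of the event $\Omega_3$ --- so the relevant Weyl sums, with frequencies bounded in terms of $p$, vanish in the limit.

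Controlling the non-summable $1/|j-j'|$ off-diagonal decay of $\hat T_N^{(\varepsilon)}$ via a band truncation (band width tied to $\varepsilon$), whose error is small in normalized trace norm, and feeding in the off-diagonal decay of $\Delta^{(\varepsilon)}$ and its powers together with the Poisson convergence and the above equidistribution, one obtains, for each fixed $p$ and jointly in $p$,
\[
\tfrac1N\mathrm{Tr}\big((\tilde T_N^{(\varepsilon)})^p\big) \ \Rightarrow\ \bE^\omega\!\int x^p\,d\nu_{\Delta^{(\varepsilon)},u} \ =:\ m_p\big(\nu_\ct^{(\varepsilon)}\big), \qquad N\to\infty
\]
(the cases $p=1,2$ can be checked directly). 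To upgrade this convergence of random moments to $\mu_{\tilde T_N^{(\varepsilon)}}\Rrightarrow\nu_\ct^{(\varepsilon)}$, I would first show that $\nu_\ct^{(\varepsilon)}$, and $\nu_\ct$, are almost surely subgaussian, uniformly in $\varepsilon$: conditionally on $\omega$, each $\varrho_k$ is a weighted sum of bounded, independent, mean-zero terms, hence subgaussian with a variance proxy $4\sum_l\Gamma_l^{-2/\alpha}$ that is independent of $k$ and finite almost surely precisely because $\alpha<2$, so that expanding $\bE^\omega\langle u,\Delta^{2r}u\rangle = 2\,\bE^\omega\|\Delta^r e_0\|^2$ into paths and combining with the square-summability of the rows of $\Pi$ gives $\bE^\omega\langle u,\Delta^{2r}u\rangle\le(C(\omega)r)^r$ (this also yields almost sure subgaussianity of $\nu_\ct$, and, when $\alpha<1$, boundedness of its support, since then $\sum_l\Gamma_l^{-1/\alpha}<\infty$ a.s.). Thus $\nu_\ct^{(\varepsilon)}$ and $\nu_\ct$ are almost surely determined by their moments, and a standard moment-method argument for random probability measures --- combining this moment-determinacy with the tightness of $(\mu_{\tilde T_N^{(\varepsilon)}})_N$ (itself a consequence of the uniform moment bounds) --- promotes the moment convergence above to $\mu_{\tilde T_N^{(\varepsilon)}}\Rrightarrow\nu_\ct^{(\varepsilon)}$.

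Finally, as $\varepsilon\to0$ the truncated sums $\varrho_k^{(\varepsilon)}$ converge to $\varrho_k$ by Proposition \ref{prop 1.1}, and the uniform subgaussian bound forces $\int x^p\,d\nu_{\Delta^{(\varepsilon)},u}\to\int x^p\,d\nu_{\Delta,u}$ for every $p$, hence $\dl(\nu_\ct^{(\varepsilon)},\nu_\ct)\to0$ almost surely. A routine $3\delta$ estimate with bounded Lipschitz test functions on $(\mathcal{M},\dl)$ --- combining the truncation bound $\delta(\varepsilon)$, the convergence $\mu_{\tilde T_N^{(\varepsilon)}}\Rrightarrow\nu_\ct^{(\varepsilon)}$, and $\nu_\ct^{(\varepsilon)}\to\nu_\ct$ --- then gives $\mu_{T_N}\Rrightarrow\nu_\ct$. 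The step I expect to be the main obstacle is the structural one: identifying the localized limit of the Fourier-conjugated Toeplitz matrix --- in particular recognizing it, after the $[0,1)\to[0,1/2)$ rescaling, as $\Pi\Lambda\Pi$ and extracting the uniform phases $U_l$ from the corner corrections, while controlling all approximation errors (band truncation, far jumps) uniformly --- together with making the equidistribution over the bulk Fourier index quantitative, which rests on the random locations avoiding low-complexity rational dependencies.
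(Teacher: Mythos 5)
Your proposal is correct in its overall architecture and lands on the same limiting object by essentially the same mechanism as the paper: isolate the $O(\varepsilon^{-\alpha})$ Poissonian large entries, conjugate by the discrete Fourier matrix, use the convergence of the order statistics to $(\Gamma_j^{-1/\alpha},\zeta_j)$ together with Weyl equidistribution of $\{\Theta_N\sigma_N(j)/(2N)\}$ over a uniform bulk Fourier index (which is exactly where the rational-independence event $\Omega_3$ enters) to identify the local limit $\Pi\Lambda\Pi$ at $u=\sqrt2\,\Pi e_0$, control the $1/|j-j'|$ tails of the conjugated projection by a band truncation, and finish with the method of moments. The main place where your route is materially harder than the paper's is the structural step you yourself flag: rather than conjugating the $N\times N$ Toeplitz directly and bookkeeping the corner corrections $J^k=R^k-J^{-(N-k)}$, the paper embeds $T_N$ as the corner $Q_{2N}G_{2N}Q_{2N}$ of the $2N\times 2N$ circulant, so that the Fourier conjugate is \emph{exactly} $P_{2N}D^\circ_{2N}P_{2N}$ with $P_{2N}=F_{2N}^*Q_{2N}F_{2N}$ an honest projection converging entrywise to $\Pi$; this single identity replaces your ``hopping operator with non-wrapping hops'' computation and is what makes the $\Pi\cdots\Pi$ sandwich appear for free. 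The remaining differences are organizational: you use one truncation parameter $\varepsilon$ where the paper uses the coupled scheme $K=M=L^{1/9}$ (capping entries at $M$ so that the truncated moments admit deterministic path-counting bounds, which is what makes moment determinacy of $\E^\omega\nu_{\tD}$ immediate); and for the removal of the truncation you rely on uniform subgaussian moment bounds $\E^\omega\langle u,\Delta^{2r}u\rangle\le (C(\omega)r)^r$ (true --- it is Theorem \ref{thm 1.3}(b), though the paper proves it a posteriori by interlacing with the circulant, and your path-expansion needs a decoupling/H\"older step since the rows of $\Pi$ are only square-summable), whereas the paper instead proves strong resolvent convergence $\tD\to\Delta$ on the core $\cC$ (Proposition \ref{prop 5.1}), which sidesteps moment estimates for the untruncated operator altogether. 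Neither difference is a gap, but if you pursue your version, the $2N$-embedding is the trick you should import, and the decoupling in the moment bound is the detail you should write out.
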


We list some properties of $\nu_\ct$ in the next theorem.

\begin{theorem} \label{thm 1.3}
The following statements hold for each $\omega \in \Omega_0$.
\begin{enumerate}
\item[(a)]  $\nu_\ct^\omega$ is a probability distribution symmetric around 0, 
\item[(b)]  $\nu_\ct^\omega$ is subgaussian. In particular,  for every $\beta>0$, 
\begin{align*}
\int_\R e^{\beta t} \nu_\ct^\omega(dt) \leq 2 \exp\Bigl(2\beta^2\sum_{j=0}^\infty \Gamma_j^{-2/\alpha} \Bigr).
\end{align*}
\item[(c)]  The support of $\nu_\ct^\omega$ is contained inside the bounded interval $$\Big[-2\sum_{j=0}^\infty \Gamma_j^{-1/\alpha}, 2\sum_{j=0}^\infty \Gamma_j^{-1/\alpha}\Big]$$
when $0<\alpha<1$.
\item[(d)]  $\nu_\ct^\omega$ has an unbounded support when $1\leq \alpha <2$.
\end{enumerate}
\end{theorem}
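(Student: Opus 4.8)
The plan is to show that, for $1 \le \alpha < 2$ and $\omega \in \Omega_0$, the diagonal operator $\Lambda^\omega$ has arbitrarily large diagonal entries $\varrho_k^\omega$ in a suitable conditional-probability sense, and to transfer this unboundedness to the spectral measure $\nu_\ct^\omega = \bE^\omega \nu_{\Delta,u}$. The key point is that $\Delta = \Pi\Lambda\Pi$ and $\Pi$ is a genuine orthogonal projection with $\Pi u = u$ (since $u = \sqrt2\,\Pi e_0$ and $\Pi^2 = \Pi$). A natural way to detect unbounded support is to produce, for every $R > 0$, a (possibly $U$-dependent) unit vector $w$ in the range of $\Pi$ with $\langle w, \Delta w\rangle$ large, and more robustly to show $\bE^\omega \langle u, \Delta^{2m} u\rangle$ grows faster than $R^{2m}$ for every $R$; equivalently, $\int t^{2m}\,\nu_\ct^\omega(dt) \ge c_m$ with $c_m^{1/2m} \to \infty$. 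By part (b) the measure is subgaussian, so all moments are finite and the support is unbounded iff the even moments grow superexponentially — but subgaussianity caps the growth, so instead one should argue directly that for each $R$ the measure assigns positive mass outside $[-R,R]$.

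The main mechanism: condition on the environment and on the event that one of the $U_j$'s is such that many cosines in \eqref{def:rho_k} align. More concretely, fix $J$ large; on the event $A_J = \{U_j \in (-\epsilon,\epsilon) \bmod 1 \text{ for } j \le J\}$ (which has positive $\bP^\omega$-probability, roughly $(2\epsilon)^{J+1}$), for the index $k=0$ we get $\varrho_0^\omega \ge 2\sum_{j=0}^{J}\Gamma_j^{-1/\alpha}\cos(2\pi U_j) \ge (1-\delta)\cdot 2\sum_{j=0}^J \Gamma_j^{-1/\alpha}$, and since $\sum_j \Gamma_j^{-1/\alpha} = \infty$ a.s.\ for $\alpha \ge 1$ (because $\Gamma_j \sim j$ and $\sum j^{-1/\alpha}$ diverges when $1/\alpha \le 1$), this is as large as we like by choosing $J$ large. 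Then use the variational characterization: on $A_J$, taking $w = e_0$ as a trial vector is not quite right since $e_0 \notin \mathrm{range}(\Pi)$; instead use $w_0 = \Pi e_0 / \|\Pi e_0\| = u/\sqrt2 \cdot \sqrt2 = $ hmm — more carefully, use that $\langle u, \Delta^2 u\rangle = \langle \Lambda \Pi u, \Lambda \Pi u\rangle = \|\Lambda u\|^2 = \sum_k |u_k|^2 (\varrho_k^\omega)^2 \ge |u_0|^2 (\varrho_0^\omega)^2 = \tfrac12 (\varrho_0^\omega)^2$, using $u_0 = \langle e_0, u\rangle = \sqrt2\,\Pi(0,0) = \tfrac{1}{\sqrt2}$. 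Hence $\int t^2\,\nu_{\Delta,u}(dt) = \langle u,\Delta^2 u\rangle \ge \tfrac12(\varrho_0^\omega)^2$ pointwise in $U$, so $\bE^\omega[\,\cdot\,]$ over $A_J$ already forces second moment of $\nu_\ct^\omega$ to be large — but large second moment alone does not give unbounded support.

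So the real argument must go through higher moments: show $\int t^{2m}\,\nu_\ct^\omega(dt) = \bE^\omega\langle u, \Delta^{2m} u\rangle \ge \bE^\omega\big[ |u_0|^2 (\varrho_0^\omega)^{2m} \cdot \mathbf{1}_{\text{nearby coords also large}} \big]$, which one lower-bounds by restricting to $A_J$ where \emph{all} of $\varrho_0^\omega, \varrho_{\pm1}^\omega, \ldots$ on a growing window are comparable to $S_J := 2\sum_{j=0}^J \Gamma_j^{-1/\alpha}$; on $A_J$ one has $\varrho_k^\omega \ge (1-\delta)S_J$ for $|k| \le K$ provided $\epsilon$ is small relative to $1/K$ (so that $\cos(2\pi(U_j + k\zeta_j))$ stays near $1$). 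Then a standard path-counting / positivity estimate for $\langle u,\Delta^{2m}u\rangle$ with a band of large diagonal entries of width $K \gg m$ gives $\langle u,\Delta^{2m}u\rangle \ge c\, ((1-\delta)S_J)^{2m}$ on $A_J$ for a universal $c > 0$ (the walk of length $2m$ stays inside the window), whence $\int t^{2m}\,\nu_\ct^\omega(dt) \ge \bP^\omega(A_J)\, c\,((1-\delta)S_J)^{2m}$. Taking $2m$-th roots and letting $m\to\infty$ (with $J$ fixed, then $J\to\infty$) shows the support reaches past $(1-\delta)S_J \to \infty$, proving unboundedness. The main obstacle is the combinatorial lower bound on $\langle u, \Delta^{2m}u\rangle$ given a band of large diagonal values through $\Pi$: one must control that the off-diagonal decay of $\Pi$ (like $1/|k-l|$) does not kill the contribution of walks confined to the window, which should follow because the dominant contribution comes from walks using the diagonal entries $\Pi(k,k)=\tfrac12$ and short hops, giving a term of order $(S_J/2)^{2m}$ times a combinatorial factor bounded below uniformly in $m$ — and the divergence $\sum_j \Gamma_j^{-1/\alpha} = \infty$ for $\alpha\ge1$ (contrasted with convergence for $\alpha<1$, which is exactly why (c) holds) is what makes $S_J$ unbounded.
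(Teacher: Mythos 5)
Your proposal addresses only part (d); parts (a)--(c) are not treated. For part (d), the overall strategy (detect unboundedness of $\nu_\ct^\omega$ via superexponential growth of $\bE^\omega\langle u,\Delta^{2m}u\rangle$ on an alignment event for the $U_j$'s) contains two gaps that I do not see how to repair. First, the alignment event $A_J=\{U_j\in(-\epsilon,\epsilon)\}$ controls only $\varrho_0^\omega$. For $k\neq 0$ one has $\varrho_k^\omega=2\sum_j\Gamma_j^{-1/\alpha}\cos(2\pi(U_j+k\zeta_j))$, and on $A_J$ the phase is essentially $k\zeta_j$ with $\zeta_j$ a fixed irrational of the environment; no choice of $\epsilon$ makes $\cos(2\pi(U_j+k\zeta_j))$ close to $1$ simultaneously for $k=0$ and $k=\pm 1$, since that would force $\zeta_j\approx 0\pmod 1$. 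So the claim that $\varrho_k^\omega\ge(1-\delta)S_J$ for all $|k|\le K$ on $A_J$ is false, and this is not a technicality: making a whole window of $\varrho_k$'s simultaneously large is obstructed by exactly the quasi-periodic structure of the model. Second, even granting such a window, the ``path-counting/positivity estimate'' $\langle u,\Delta^{2m}u\rangle\ge c((1-\delta)S_J)^{2m}$ is unsubstantiated: the off-diagonal entries of $\Pi$ are the complex numbers $-i/(\pi(k-l))$, the terms of the path expansion carry phases, and there is no positivity structure preventing cancellation against the diagonal path. Relatedly, your identity $\langle u,\Delta^2u\rangle=\|\Lambda u\|^2$ is incorrect; since $\Delta^2=\Pi\Lambda\Pi\Lambda\Pi$ and $\Pi u=u$, one gets $\langle u,\Delta^2u\rangle=\|\Pi\Lambda u\|^2\le\|\Lambda u\|^2$, so even the claimed second-moment lower bound $\tfrac12(\varrho_0^\omega)^2$ does not follow (the inequality goes the wrong way, and lower-bounding $\|\Pi\Lambda u\|$ reintroduces the cancellation problem).

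The paper circumvents all of this by never estimating moments of $\Delta$ directly. It uses the explicit circulant limit: by \cite[Theorem 3.3(b)]{Bos1}, $\mu_G\Rrightarrow\nu_\cc$ where $\nu_\cc^\omega$ is the law of $2\sum_j\Gamma_j^{-1/\alpha}\cos(2\pi U_j)$, whose support is easily shown unbounded when $\sum_j\Gamma_j^{-1/\alpha}=\infty$ (i.e.\ $\alpha\ge1$) --- this is essentially the sound kernel of your idea. It then transfers unboundedness from $\nu_\cc$ to $\nu_\ct$ at the matrix level via the block identity $H_{2N}^*G_{2N}H_{2N}=\mathrm{diag}(T_N+S_N,\,T_N-S_N)$ and Weyl's inequalities, arguing by contradiction that if $\nu_\ct$ had bounded support almost surely then $\nu_\cc$ would too; this gives $\bP(\cE)>0$. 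Finally it upgrades positive probability to probability one using the Hewitt--Savage zero-one law, where the key input is the ergodicity of $\Delta^\omega$ (so that $\supp(\nu_\ct^\omega)=\sigma(\Delta^\omega)$ is a deterministic set given $\omega$, stable under finite permutations of $(E_j,\zeta_j)$). If you want a direct operator-theoretic argument, you would at minimum need the identification $\supp(\nu_\ct^\omega)=\sigma(\Delta^\omega)$ plus a genuinely new device for exhibiting large quadratic forms $\langle w,\Delta w\rangle$; the test vector $w=\sqrt2\,\Pi e_0$ fails for the reason above, since the neighboring $\varrho_k$'s it weights cannot all be made large and positive.
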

It is an interesting open problem to show that $\nu_\ct^\omega$ is absolutely continuous $\bP^\omega$-a.s.. One of the main difficulties in showing the absolute continuity lies in the fact that although each fixed $\omega$,
$\varrho^\omega_k$ is absolutely continuous (with respect to the randomness of $(U_j)_{j \ge 0}$), they are not independent.  However, we would like to mention that for fixed $\omega \in \Omega_0$,  the random operator $\Delta^\omega$, viewed as a function of $(U_j)_{j \ge 0}$,  is ergodic, as explained in the next subsection. We use this  fact in showing part (d) of Theorem~\ref{thm 1.3}. We  believe that it might be helpful in showing the absolute continuity of the measure  
$\nu_\ct^\omega$  along the lines of argument in \cite{Del}. 

\subsection{Ergodicity of the operator \texorpdfstring{$\Delta^\omega$}{Delta}}
Throughout this subsection, we fix $\omega \in \Omega_0$.
Without loss, let us assume that the random variables $U = (U_j)_{j \ge 0}$ are the coordinate-wise projections from   $[0, 1)^{\Z_+}$ equipped with the Borel $\sigma$-algebra and the product Lebesgue measure. 
Define the coordinate-wise rotation map $\theta^\omega$ on $[0, 1)^{\Z_+}$
\[ \theta^\omega( U )  =  ( \{ U_j+  \zeta_j\} )_{j \ge 0},\]
where for a positive real $x$, $\{ x\}$ denotes its fractional part. 
Clearly, $\theta^\omega$ is measure preserving and since $\zeta_j$'s are irrational, $\theta^\omega$ is also ergodic. Moreover, $\theta^\omega$ acts ergodically on the sequence $(\varrho_k^\omega )_{ k \in \Z}$ as 
 \begin{equation}\label{eq:ergodic}
 \Big(\varrho_k^\omega \big(  (\theta^\omega)^l (U) \big ) \Big)_{k \in \Z}  =    \big(\varrho^\omega_{k+l}\big)_{k \in \Z}, \quad \text{ for any } l \in \Z. 
  \end{equation}
This action can be naturally extended to the random operator $\Delta^\omega$ as 
\[  \Delta^\omega \big( (\theta^\omega)^l  (U) \big) = \Pi\; \mathrm{diag} \big(\varrho_k^\omega \big(  (\theta^\omega)^l (U) \big ) \big)_{k \in \Z}  \Pi.\]
This implies that the self-adjoint random operator $\Delta^\omega$ is ergodic, i.e.,  for each $l \in \Z$, there exists a unitary operator $\mathfrak{U}_l = \mathfrak{U}^\omega_l$ on $\ell^2(\Z)$ such that
 \begin{equation}\label{eq:ergodic_op1}
\Delta^\omega ( (\theta^\omega)^l  (U) ) = \mathfrak{U}^*_l \Delta^\omega(U) \mathfrak{U}_l.
  \end{equation}
  See \cite{Aiz} or \cite{Kir} for more information on ergodic operators. 
Indeed, take $\mathfrak{U}_l$ to be the right $l$-shift operator on $\ell^2(\Z)$, i.e.,  $ (\mathfrak{U}_l v)_n =  v_{n -l}$ for all $n \in \Z$. Since $\Pi$ is translation invariant, it commutes with $\mathfrak{U}_l$. This, coupled with \eqref{eq:ergodic}, implies \eqref{eq:ergodic_op1}. We record this observation in the following lemma.

\begin{lemma}\label{lem:ergodic}
For each $\omega \in \Omega_0$, the operator $\Delta^\omega$ is ergodic with respect to the rotation $\theta^\omega$. 
Consequently, there exists a closed set $\Sigma^\omega \subseteq \R,$ 
such that 
\begin{align*}
\sigma( \Delta^\omega ) = \Sigma^\omega \quad \mathbf{P}^\omega\text{-a.s.},  
\end{align*}
where $\sigma( \Delta^\omega )$ is the spectrum of the operator $\Delta^\omega $. The same applies to the absolutely continuous, singular continuous, and pure point spectrum of  $\Delta^\omega$. 
\end{lemma}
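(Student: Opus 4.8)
The plan is to invoke the standard theory of ergodic self-adjoint operators, for which the only nontrivial inputs we must supply are (i) the covariance relation \eqref{eq:ergodic_op1} exhibiting $\Delta^\omega$ as an ergodic family over the dynamical system $\big([0,1)^{\Z_+}, \text{product Lebesgue}, \theta^\omega\big)$, and (ii) a suitable measurability statement so that the general theorems apply. Item (i) is essentially done in the text preceding the lemma: $\mathfrak{U}_l$, the right $l$-shift on $\ell^2(\Z)$, is unitary and commutes with $\Pi$ by translation invariance of $\Pi$ (visible from \eqref{eqn 1.3}, since $\Pi(k,l)$ depends only on $k-l$), while \eqref{eq:ergodic} gives $\mathrm{diag}\big(\varrho^\omega_k((\theta^\omega)^l U)\big)_k = \mathfrak{U}_l^* \,\mathrm{diag}(\varrho^\omega_k)_k\, \mathfrak{U}_l$; composing with $\Pi$ on both sides yields \eqref{eq:ergodic_op1}. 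I would also note that $\theta^\omega$ is measure-preserving and, because each $\zeta_j$ is irrational (as $\omega \in \Omega_0 \subseteq \Omega_2$), each coordinate rotation is ergodic, hence the product map $\theta^\omega$ is ergodic on the product space by the standard fact that a countable product of weakly mixing (indeed, a rotation by an irrational is ergodic but we actually want ergodicity of the product — here one uses that an infinite product of ergodic rotations is ergodic, or more safely that $\theta^\omega$ is ergodic because its coordinate rotations have irrational, rationally \emph{independent} angles by $\omega \in \Omega_3$, so no nontrivial finite-dimensional character is invariant).

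Next I would address measurability and the passage from the algebraic covariance relation to the a.s.-constancy of the spectrum. The operator $\Delta^\omega(U)$ is a.s.\ self-adjoint with a common core $\cC$ (by Proposition \ref{prop 1.1}(b) and the discussion before it), and for each fixed $v, w \in \cC$ the map $U \mapsto \langle v, \Delta^\omega(U) w\rangle$ is measurable (indeed continuous, being a finite sum of the a.s.-convergent series defining the $\varrho^\omega_k$), so $U \mapsto \Delta^\omega(U)$ is a measurable family of self-adjoint operators in the resolvent sense: $U \mapsto \langle v, (\Delta^\omega(U) - z)^{-1} w\rangle$ is measurable for each $z \in \C\setminus\R$. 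Then I would apply the classical result (see \cite{Aiz} or \cite{Kir}) that for an ergodic measurable family of self-adjoint operators $\{H_U\}$ satisfying $H_{\theta U} = \mathfrak{U}_1^* H_U \mathfrak{U}_1$ with $\theta$ ergodic, the spectrum $\sigma(H_U)$ is a.s.\ equal to a deterministic closed set $\Sigma^\omega$, and likewise the absolutely continuous, singular continuous, and pure point components $\sigma_{ac}$, $\sigma_{sc}$, $\sigma_{pp}$ are each a.s.\ deterministic closed sets. The mechanism is: unitary conjugation preserves each spectral component, so each of $U \mapsto \sigma_\bullet(\Delta^\omega(U))$ is a $\theta^\omega$-invariant map into the closed subsets of $\R$; spelling this out via the countable family of events $\{\lambda \in \sigma_\bullet(\Delta^\omega(U)) \text{ within distance } 1/m \text{ of a rational } q\}$, each such event is measurable and $\theta^\omega$-invariant, hence has probability $0$ or $1$ by ergodicity, and the deterministic set is recovered as the set of $q$-limit points with probability-one membership.

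The main (and really the only) obstacle is the measurability bookkeeping for \emph{unbounded} operators when $\alpha \in [1,2)$: one must make sure the ``measurable family of self-adjoint operators'' hypothesis of the ergodic-operator theorems is genuinely met, i.e.\ that strong resolvent measurability holds and that the common core $\cC$ is honored on a single full-measure set (not an $U$-dependent one). This is handled by the uniform statements already in the excerpt — Proposition \ref{prop 1.1}(a) gives, on a single $\bP^\omega$-full set, $\sum_k (1+k^2)^{-1}(\varrho^\omega_k)^2 < \infty$, which simultaneously makes $\Delta^\omega$ well-defined on $\cC$ and makes $U \mapsto \langle v, (\Delta^\omega - z)^{-1} w \rangle$ measurable; beyond that the argument is routine. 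A secondary, minor point is to confirm ergodicity of the infinite-product rotation $\theta^\omega$ rather than just of each coordinate, which I would settle by the rational-independence of the angles $(\zeta_j)_{j\ge 0}$ guaranteed by $\omega \in \Omega_3$, ensuring no nonconstant $L^2$ function built from finitely many coordinates is invariant, whence $\theta^\omega$ is ergodic.
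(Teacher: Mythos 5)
Your proposal is correct and follows essentially the same route as the paper: establish the covariance relation \eqref{eq:ergodic_op1} via the shift operators $\mathfrak{U}_l$ and the translation invariance of $\Pi$, then invoke Pastur's theorem (\cite[Theorem 3.10]{Aiz}) for the a.s.-constancy of the spectrum and its components. Your added care on two points — that ergodicity of the infinite-product rotation requires the rational independence of the $\zeta_j$'s from $\Omega_3$ (mere irrationality of each coordinate would not suffice), and the measurability bookkeeping for the unbounded case — is if anything more explicit than the paper's own treatment.
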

The consequence mentioned in the above lemma is due to  Pastur's theorem (see, for example, \cite[Theorem 3.10]{Aiz}),

In the rest of the paper,  assuming that the underlying probability space is clear from the context, we shall use $\P, \E$ to denote the probability and expectation over all random variables involved in the expression under consideration. Also, we will use $\P^\omega$ and  $\E^\omega$ to denote the conditional probability and expectation given $\omega$.

\section{Roadmap for the proof of Theorem \ref{main_thm}}

\subsection{Connection between Toeplitz and circulant matrices} \label{sec 2.1}

Following \cite{Sen2}, we observe that the Toeplitz matrix $ T_N$ is the principal submatrix of the $2N \times 2N$ circulant matrix

\begin{align}\label{eq:matrix G}
G_{2N} = \begin{pmatrix}
b_0 & b_1 & \ldots & b_{N-1} &  b_N & b_{N-1} & \ldots & b_2 & b_1\\
b_1 & b_0 & b_1 & \ddots & b_{N-1} &  b_N & b_{N-1} & \ddots & b_2\\
\vdots & \ddots & \ddots & \ddots & \ddots & \ddots & \ddots & \ddots & \vdots\\
b_2 & \ddots & \ddots & \ddots & \ddots & \ddots & \ddots & \ddots & b_1\\
b_1 & b_2 & \ldots & \ldots & \ldots & \ldots & \ldots & b_1 & b_0
\end{pmatrix} = (b_{\min(|k-l|,2N-|k-l|)})_{k,l\in[2N]}.
\end{align}
 In other words, letting \begin{align*}
Q_{2N} = \begin{pmatrix}
I_N	& 0_N\\
0_N & 0_N
\end{pmatrix},
\end{align*}
we have 
\[ \begin{pmatrix}
T_N	& 0_N\\
0_N & 0_N
\end{pmatrix} =  Q_{2N} G_{2N}Q_{2N}. \]
The choice of $b_N$ does not affect the above observation and we set $b_N = 0$.
Working with circulant matrices has the advantage that they can be diagonalized by the discrete Fourier transform matrix. More precisely, let $F_{2N}$ denote the $2N \times 2N$ discrete Fourier transform matrix,
\begin{align*}
F_{2N} = \Bigl(\frac{1}{\sqrt{2N}}\exp \Bigl(\frac{2\pi ikl}{2N}\Bigr)\Bigr)_{k,l\in [2N]}.
\end{align*} 
Then we can write $G_{2N} = F_{2N} D_{2N}^\circ F^*_{2N}$, where
$D^\circ_{2N} = \diag(d^\circ_0, \ldots, d^\circ_{2N-1})$ is the diagonal matrix of eigenvalues of $G_{2N}$ which are given by
\begin{align*}
d_k^\circ = \sum_{j=0}^{2N-1} b_{\min(j, 2N - j)} \exp\Bigl(\frac{2\pi ijk}{2N}\Bigr) = b_0 + 2\sum_{j=1}^{N-1} b_j \cos \Bigl(\frac{2\pi jk}{2N}\Bigr), \qquad k\in [2N].
\end{align*}
The matrix $Q_{2N} G_{2N}Q_{2N}$ has the same eigenvalues as its discrete Fourier conjugate \begin{align*}
F_{2N}^*Q_{2N} G_{2N}Q_{2N}F_{2N} = F_{2N}^*Q_{2N} F_{2N} D_{2N}^\circ F_{2N}^*Q_{2N}F_{2N} = P_{2N}D_{2N}^\circ P_{2N}
\end{align*}
where the matrix $P_{2N}:= F_{2N}^*Q_{2N} F_{2N}$ is a projection matrix with  entries \begin{align*}
P_{2N}(k,l) = \begin{cases}
\frac{1}{2}; & \mathrm{if\;} k = l,\\
0; & \mathrm{if\;} k \neq l\mathrm{\;and\;} |k-l| \mathrm{\;is\;even},\\
\frac{1}{N}\Bigl(1-\exp\Bigl(-\frac{2\pi i (k-l)}{2N}\Bigr)\Bigr)^{-1}; & \mathrm{if\;}|k-l| \mathrm{\;is\;odd}.
\end{cases}
\end{align*}
From the above observations we have
\begin{align} \label{eq:esd_toep_circ}
\frac{1}{2}(\mu_{ T_N} + \delta_0) = \mu_{Q_{2N}  G_{2N} Q_{2N}} = \mu_{P_{2N} D_{2N}^\circ P_{2N}}.
\end{align}
We will approximate $d_k^\circ$ by 
\begin{align*}
 d_k = 2\sum_{j=0}^{N-1}  b_j \cos \Bigl(\frac{2\pi jk}{2N}\Bigr),
\end{align*}
and define $ D_{2N} = \diag(d_0, \ldots,  d_{2N-1})$.

\subsection{Truncation of matrices and operators}
We will establish Theorem~\ref{main_thm} via the method of moments. To execute it, we first need to perform several truncations on the variables involved. The level of these truncations would be  measured  by positive integers $K$, $M$, and $L$.

First we truncate the variable $b_j$ by setting  $ b_j^\M:= \sgn(b_j)\min(|b_j|,M)$, $j \in [N]$. Replacing $b_j$ by $b_j^\M$, we obtain truncated versions of $d_k, D_{2N}$, and $G_{2N}$, which we will denote by $d^\M_k, D^\M_{2N}$, and $G^\M_{2N}$ respectively. 

Let $|b_{(0)}|\geq  \ldots\geq  |b_{(N-1)}|$ be the order statistics of $|b_0|, \ldots, |b_{N-1}|$ and let $\sigma_N:[N] \to [N]$ be the uniform random permutation such that $ b_{(j)} =  b_{\sigma_N(j)}$ for $j \in [N]$. If we now define $b_{(j)}^\M~= \sgn(b_{(k)})\min(|b_{(j)}|,M), j \in [N]$, then we can write \begin{align*}
 d_k^\M = 2\sum_{j=0}^{N-1} b_{(j)}^\M\cos \Bigl(\frac{2\pi k\sigma_N(j)}{2N} \Bigr).
\end{align*}
Next, we keep the first $K$ many terms in the above term and define
\begin{align*}
 d_k^\MK = 2\sum_{j=0}^{K-1}  b_{(j)}^\M \cos \Bigl(\frac{2\pi k\sigma_N(j)}{2N}\Bigr).
\end{align*}
The corresponding diagonal matrix is denoted by  $D_{2N}^\MK = \diag(d_0^\MK, \ldots,  d_{2N-1}^\MK)$.

In our last truncation, for a given band width $L$, we truncate the matrix $P_{2N}$ in a circular fashion by setting
\begin{align*}
P^\L_{2N}(k,l) = \begin{cases}
P_{2N}(k,l); & |k-l| \leq L \mathrm{\;or\;} |k-l| \geq  2N-L,\\
0; & \mathrm{otherwise}.
\end{cases}
\end{align*}

To keep the notation light, from now on we shall drop the subscripts mentioning the matrix size.

We perform analogous truncations on the operator $\Delta$ as well. 
Let $\Pi^\L:\lz\to \lz$ be obtained from $\Pi$ by zeroing out the entries of $\Pi$ outside the band of width $L$ around its diagonal. That is, 
\begin{align*}
\Pi^\L(k,l): = \langle e_k, \Pi^\L e_l\rangle = \begin{cases}
\Pi(k,l), & \text{if } |k-l| \leq L,\\
0 & \mathrm{otherwise}.
\end{cases}
\end{align*}
$\Pi^\L$ still remains a bounded operator on $\lz$ (see Lemma \ref{lem pi_L bounded}). 

Let  $\Gamma_j^\M = \max(\Gamma_j, M^{-\alpha})$ and  define
\begin{align*}
\varrho_k^\M  = 2\sum_{j=0}^\infty (\Gamma_j^\M)^{-1/\alpha}\cos (2\pi (U_j + k\zeta_j)), \qquad
\varrho^\MK_k  = 2\sum_{j=0}^{K-1} (\Gamma_j^\M)^{-1/\alpha}\cos (2\pi (U_j + k\zeta_j)).
\end{align*}
Denote the  corresponding diagonal operators as $ \Lambda^\M = \mathrm{diag}((\varrho_k^\M)_{k \in \Z})$ and $ \Lambda^\MK = \mathrm{diag}((\varrho_k^\MK)_{k \in \Z})$.
Note that $|\varrho_k^\MK| \le MK$ for each $k$ and consequently, $\Lambda^\MK$ is a bounded operator with $\|\Lambda^\MK\|_\op \le MK$ and hence it is self-adjoint.

We shall use the following notation in the rest of the paper. Let $H$ denote the matrix $PDP$. We use $\fH$, $\sH$ and $\tH$ to respectively denote the matrices $PD^\M P$, $P^\L D^\M P^\L$ and $P^\L D^\MK P^\L$. Likewise, we define $\fD = \Pi\Lambda^\M\Pi$, $\sD = \Pi^\L \Lambda^\M \Pi^\L$ and $\tD = \Pi^\L \Lambda^\MK \Pi^\L$.

\subsection{Proof of Theorem \ref{main_thm}}
We identify the three essential steps that go into the proof of our main result. The standing assumption throughout the paper is the following relationship among the different truncation levels:
\begin{align}\label{eq:trunc_assump}
    K = M = L^{1/9}.
\end{align}
 In the first step, using the method of moments, we show that the empirical spectral distribution of the truncated matrix $\tH$ is close to the $U$-averaged spectral measure of the truncated operator $\tD$ at $e_0$.

\begin{prop} \label{prop 3.1}
Fix $K, L, M \ge 1$.
There exist random probability measures $ (\mu_N^\MLK)_{N \ge 1}$ and $ \vartheta^\MLK$  defined on a common probability space such that $\mu_N^\MLK \ed \mu_{\tH}$, $\vartheta^\MLK \ed \E^\omega \nu_{\tD}$ and almost surely, as $N\to\infty$,
\begin{align*}
\dl\lp \mu_N^\MLK, \vartheta^\MLK  \rp \to 0.
\end{align*}
\end{prop}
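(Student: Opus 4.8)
The plan is to prove Proposition~\ref{prop 3.1} by the method of moments applied simultaneously to the matrix $\tH = P^\L D^\MK P^\L$ and to the operator $\tD = \Pi^\L \Lambda^\MK \Pi^\L$, exploiting that, conditionally on the environment $\omega$ and the permutation data, both objects are built from the same finitely many ``large'' coordinates. The first task is to construct the coupling. The eigenvalues $d_k^\MK$ depend on the top $K$ order statistics $|b_{(0)}| \ge \cdots \ge |b_{(K-1)}|$ of the rescaled heavy-tailed sequence, on their signs, and on the locations $\sigma_N(0), \dots, \sigma_N(K-1)$. By the classical theory of heavy-tailed order statistics (the same input used to define $c_N$ via \eqref{eqn 1.1}), $(c_N^{-1} |a_{(j)}|)_{j<K}$ converges jointly in distribution to $(\Gamma_j^{-1/\alpha})_{j<K}$; the signs converge to i.i.d.\ $p$-coins (which I will absorb into the $U_j$'s via the substitution $U_j \mapsto U_j + \tfrac12(1-\mathrm{sign})$, using that $\cos(2\pi(\cdot))$ is unaffected by a sign flip, i.e.\ $-\cos\theta = \cos(\theta+\pi)$); and, crucially, the normalized locations $\sigma_N(j)/(2N)$ converge to i.i.d.\ uniforms on $[0,\tfrac12]$ --- here one uses that $\cos(2\pi \cdot k \cdot x)$ for $x$ and $1-x$ agree since the index $k/(2N)$ ranges over a symmetric set, so $\sigma_N(j)/N \bmod 1$ folded onto $[0,\tfrac12]$ gives the $\zeta_j$'s. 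Invoking Skorokhod's representation theorem on the complete separable metric space $\mathcal{M}$, I may realize all these convergences almost surely on a common probability space, which is exactly the $\mu_N^\MLK \ed \mu_{\tH}$, $\vartheta^\MLK \ed \E^\omega \nu_{\tD}$ statement.

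Next, the moment computation. Fix $m \ge 1$; I want to show $\frac{1}{2N}\operatorname{tr}(\tH)^m \to \langle e_0, (\tD)^m e_0\rangle$ after $U$-averaging, with the right normalization. Writing $\tH = P^\L D^\MK P^\L$ and using that $(P^\L)^2 = P^\L$ is false in general but $P^\L$ is close to the genuine projection $P$ (band truncation of a projection), one expands
\begin{align*}
\operatorname{tr}(\tH)^m = \sum_{k_1,\dots,k_m} P^\L(k_1,k_2) d_{k_2}^\MK P^\L(k_2,k_3) d_{k_3}^\MK \cdots P^\L(k_m,k_1) d_{k_1}^\MK.
\end{align*}
The key structural point, already flagged in the introduction, is that after the Fourier conjugation the matrix behaves like a band matrix of width $L$: the entries $P^\L(k,l)$ decay like $1/|k-l|$ and are supported on $|k-l|\le L$, so the sum localizes to indices within distance $mL$ of one another. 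Translation-invariance of $P^\L$ (up to the circular wrap, which contributes a negligible boundary term of order $L/N$) lets me write $\frac{1}{2N}\operatorname{tr}(\tH)^m$ as an average over a base index $k_1 \in [2N]$ of a local quantity that, as $N\to\infty$, converges to $\langle e_0, (\Pi^\L \Lambda^\MK \Pi^\L)^m e_0\rangle$ evaluated in the limiting environment --- because $d_{k}^\MK$ with $k$ near $k_1 + r$ converges to $\varrho_r^\M{}^{,K}$ (the cosines $\cos(2\pi k \sigma_N(j)/(2N))$ converge to $\cos(2\pi(U_j + r\zeta_j))$ once the base point is fixed, by an Euler-angle-addition bookkeeping) and $P^\L(k,l)$ depends only on $k-l$ in the bulk. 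Matching the normalization $\langle e_0, \cdot \, e_0 \rangle$ to the trace requires the averaging over $k_1$ to play the role of averaging over the diagonal; this is where ergodicity/translation-invariance of the limit object is used. I would control the convergence of moments via dominated convergence, using the deterministic bound $|\varrho_k^\MK| \le MK$ and $\|\Pi^\L\|_\op \le C\log L$ (Lemma~\ref{lem pi_L bounded}) to get a uniform-in-$N$ moment bound $\frac{1}{2N}\operatorname{tr}(\tH)^{2m} \le (CMK\log L)^{2m}$, hence tightness and the fact that the limit is determined by its moments (the limit measure is compactly supported for fixed $K,L,M$). Then $\dl(\mu_N^\MLK, \vartheta^\MLK)\to 0$ follows since on a fixed bounded interval, convergence of all moments implies weak convergence, and weak convergence of probability measures supported in a fixed compact set is metrized by $\dl$; the ``almost surely'' comes from the Skorokhod coupling plus a Borel--Cantelli argument on the fluctuations of the random moments (the variance of $\frac{1}{2N}\operatorname{tr}(\tH)^m$ around its conditional mean is $O(N^{-1})$ by the standard second-moment count, since only $O(1)$ large entries are involved and the band structure limits the number of contributing index tuples).

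The main obstacle I anticipate is the bookkeeping that identifies the limit of the $k_1$-averaged local trace with the spectral-measure-at-$e_0$ quantity, \emph{including} getting the unit vector $u = \sqrt{2}\,\Pi e_0$ (versus $e_0$) right. The subtlety is that $\tH$ involves $P^\L$ on \emph{both} sides of $D^\MK$, so a single power of $\tH$ carries two $P^\L$ factors, whereas $(\tD)^m$ at $e_0$ is $\langle e_0, \Pi^\L \Lambda^\MK \Pi^\L \cdots \Pi^\L \Lambda^\MK \Pi^\L e_0\rangle$ --- the outer $\Pi^\L$'s acting on $e_0$ are precisely what produces the vector $\Pi e_0$ (up to the band error), and the $\sqrt 2$ normalization is needed because $\|\Pi e_0\|_2^2 = \Pi(0,0) = \tfrac12$. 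Reconciling the trace normalization $\frac{1}{2N}\operatorname{tr}$ against the factor-of-$2$ in $u$ and the factor-of-$\tfrac12$ in the identity \eqref{eq:esd_toep_circ} is the delicate accounting that I expect to consume most of the work; the heavy-tailed order-statistics convergence and the second-moment fluctuation bound are comparatively routine. A secondary technical point is handling the difference between $P^\L$ and a genuine band-truncation of the projection $P$ on $\ell^2(\Z)$ (as opposed to on $\Z/2N\Z$): the entries $P_{2N}(k,l) = \frac{1}{N}(1-e^{-2\pi i(k-l)/(2N)})^{-1}$ for odd $|k-l|$ converge to $\Pi(k,l) = -\frac{i}{\pi(k-l)}$ only when $|k-l| \ll N$, which is guaranteed by the band width $L = M^9 \ll N$, but the error must be summed over the $O((mL)^{m})$ relevant tuples and shown to vanish, which is where the polynomial relationship \eqref{eq:trunc_assump} between $K,M,L$ (and implicitly $N \to \infty$ first) earns its keep.
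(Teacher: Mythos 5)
Your overall strategy (method of moments, Skorokhod coupling of the top-$K$ order statistics and their locations, band localization, replacing $P^\L$ by $\Pi^\L$ via the entrywise estimate, and turning the trace into a diagonal average) is the same as the paper's. But there is a genuine gap at the central step: you never identify the mechanism by which the diagonal average $\frac{1}{2N}\sum_{\Theta\in[2N]}\prod_l d^\MLK_{k_{2l-1}+\Theta}$ converges to the \emph{$U$-averaged} quantity $\E^\omega\prod_l\varrho^\MK_{k_{2l-1}}$ --- i.e., where the i.i.d.\ uniforms $U_j$ in the limiting operator come from. In the paper this is Weyl's equidistribution criterion: writing the base-point average as $\E_{\Theta_N}$ for $\Theta_N$ uniform on $[2N]$, one shows (Lemmas \ref{lem a1}--\ref{lem 3.4}) that the vector $\bigl(\{\Theta_N\sigma_N(j)/2N\}\bigr)_{j<K}$ equidistributes on the torus $[0,1]^K$, jointly with and asymptotically independent of the order-statistics data, precisely because the limits $\zeta_j$ of $\sigma_N(j)/2N$ are rationally independent almost surely. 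Your appeal to ``ergodicity/translation-invariance of the limit object'' and ``Euler-angle-addition bookkeeping'' does not supply this; without the rational-independence input the phases $k\sigma_N(j)/N$ could resonate and the average would not produce independent uniforms. Relatedly, your proposed route to the almost-sure statement --- a variance bound of order $N^{-1}$ for $\frac{1}{2N}\operatorname{tr}(\tH)^m$ around a ``conditional mean'' plus Borel--Cantelli --- is the wrong mechanism: conditional on the top-$K$ magnitudes and locations the trace is deterministic, there is no residual independent randomness to concentrate over, and the limit $\E^\omega\nu_{\tD}$ is itself random. The almost-sure convergence comes entirely from the Skorokhod coupling making Lemma \ref{lem 3.4} pointwise, after which the moment convergence is a deterministic (equidistribution) statement realization by realization.

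Two secondary points. First, your expansion of $\operatorname{tr}(\tH)^m$ with a single $P^\L$ factor between consecutive $d$'s is incorrect: since $(P^\L)^2\neq P^\L$, the correct expansion of $\operatorname{tr}(P^\L D^\MK P^\L)^r$ is a sum over paths of length $2r$ carrying $2r$ factors of $P^\L$ and $r$ diagonal factors at the odd positions, as in \eqref{prop 3.1 eqn 2}; this is repairable but changes the combinatorics you would need to control. Second, the accounting with $u=\sqrt{2}\,\Pi e_0$ and the factor of $\tfrac12$ in \eqref{eq:esd_toep_circ} that you flag as the main difficulty does not belong to Proposition \ref{prop 3.1} at all: here the limit is $\E^\omega\nu_{\tD}$ at $e_0$, and the identity $\tfrac{1}{2N}\operatorname{tr}(\tH)^r\to\E^\omega(\tD)^r(0,0)$ involves no such factors; the passage to the vector $u$ is handled separately in Lemma \ref{lem spec_measure_at_u}.
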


The second step comprises showing that the empirical spectral distributions of $H$ and $\tH$ are close.

\begin{prop} \label{prop 4.2}
Assume \eqref{eq:trunc_assump}. Then
\begin{align*}
\lim_{L \to \infty} \limsup_N \E \dl(\mu_H, \mu_{\tH}) = 0.
\end{align*}
\end{prop}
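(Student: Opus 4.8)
The proof splits the distance $\dl(\mu_H,\mu_{\tH})$ along the chain of truncations $H \rightsquigarrow \fH \rightsquigarrow \sH \rightsquigarrow \tH$ and controls each link by a Hoffman--Wielandt-type bound, namely $\dl(\mu_A,\mu_B)^3 \le \frac{1}{N}\|A-B\|_{\mathrm F}^2$ for symmetric $A,B$ (this is the standard rank/Frobenius inequality; see Lemma~3 type bounds in \cite{Bai2}). So it suffices to show, under \eqref{eq:trunc_assump},
\begin{equation}\label{eq:four_links}
\lim_{L\to\infty}\limsup_N \E\Bigl[\tfrac1N\|H-\fH\|_{\mathrm F}^2\Bigr]=0,\quad
\lim_{L\to\infty}\limsup_N \E\Bigl[\tfrac1N\|\fH-\sH\|_{\mathrm F}^2\Bigr]=0,\quad
\lim_{L\to\infty}\limsup_N \E\Bigl[\tfrac1N\|\sH-\tH\|_{\mathrm F}^2\Bigr]=0,
\end{equation}
after which the triangle inequality for $\dl$ and subadditivity of $x\mapsto x^{1/3}$ finish the argument. (One should also remember that $H$ uses $d_k$, an approximation of the true eigenvalue $d_k^\circ$; the passage from $d_k^\circ$ to $d_k$ changes only the $b_N$-term and a boundary term, contributing $O(1/N)$ in Frobenius norm squared, which is negligible — I would dispose of this at the very start, or absorb it into the first link.)

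\textbf{The truncation link $H\to\fH$.} Here $\fH-H = P(D^\M-D)P$ with $\|P\|_\op=1$, so $\frac1N\|H-\fH\|_{\mathrm F}^2 \le \frac1N\|D^\M-D\|_{\mathrm F}^2 = \frac1{N}\sum_{k\in[2N]}(d_k^\M-d_k)^2$. Writing $d_k-d_k^\M = 2\sum_j (b_j - b_j^\M)\cos(2\pi jk/2N)$ and expanding the square, the diagonal-in-$k$ sum over $k\in[2N]$ of $\cos^2$ gives a factor $\sim N$ and the off-diagonal cross terms cancel by orthogonality of characters, leaving $\frac1N\|D^\M-D\|_{\mathrm F}^2 \asymp \sum_{j=0}^{N-1}(|b_j|-M)_+^2 = c_N^{-2}\sum_{j}(|a_j|-c_NM)_+^2$. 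Taking expectations and using that the $a_j$ are i.i.d.\ in the domain of attraction of an $\alpha$-stable law with $\alpha<2$, a standard computation with \eqref{eqn 1.1} (Karamata) gives $\E[c_N^{-2}\sum_j(|a_j|-c_NM)_+^2] \to C_\alpha M^{2-\alpha}\ell$-correction; the point is this is $O(M^{2-\alpha})$ uniformly in $N$, which is \emph{not} going to zero as $L\to\infty$ since $M=L^{1/9}\to\infty$. This is the crux of the difficulty: the naive Frobenius bound is too lossy on the truncation step.

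\textbf{The main obstacle and how to get around it.} The fix is that one must not compare $H$ to $\fH$ directly in Frobenius norm but exploit that the large entries of $b$ are \emph{few} — only about $K$ of the $N$ entries exceed $M$ in normalized scale (more precisely, $\#\{j: |b_j|>M\}$ is $\mathrm{Binomial}(N, g(c_N M))$ with mean $\approx N\cdot (c_NM)^{-\alpha}\ell(c_NM) \asymp M^{-\alpha}$, which is $o(1)$, so with the threshold set at level below $M$ there are $O(K)$ of them). Thus $D^\circ - D^\MK$ is, up to a genuinely small Frobenius piece, a sum of at most $K$ rank-one (in the appropriate Fourier sense) pieces whose removal perturbs $\mu_H$ by $O(K/N)$ in Lévy distance — i.e. one must interleave a \emph{rank bound} $\dl(\mu_A,\mu_{A+E})\le \mathrm{rank}(E)/N$ with the Frobenius bound. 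Concretely I would: (i) peel off from $d_k^\circ$ the contribution of the $K$ largest $|b_j|$'s, using the rank inequality to absorb a $O(K/N)\to 0$ error; (ii) show the \emph{remaining} entries, now all bounded by $c_N M$ in magnitude with the top ones removed, have $\E[\frac1N\|\cdot\|_{\mathrm F}^2]\to 0$ because the sum $\sum_{j\ge K}(|b_{(j)}|\wedge M)^2$ over the \emph{small} order statistics is genuinely small (here one uses that $|b_{(K)}| = c_N^{-1}|a_{(K)}|$ concentrates and the tail sum $\sum_{j> K} c_N^{-2}a_{(j)}^2$ vanishes as $K\to\infty$, an easy consequence of $\alpha<2$ and the LLN for the Poisson/stable point process — cf.\ the computations used to justify Proposition~\ref{prop 1.1}). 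This is exactly parallel to, and should reuse, the order-statistics estimates already needed for Proposition~\ref{prop 3.1}.

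\textbf{The band-truncation link $\fH\to\sH$.} For $\fH-\sH = PD^\M P - P^\L D^\M P^\L = (P-P^\L)D^\M P + P^\L D^\M (P-P^\L)$, I bound $\frac1N\|\fH-\sH\|_{\mathrm F}^2 \le \frac{2}{N}\|D^\M\|_\op^2\bigl(\|(P-P^\L)P\|_{\mathrm F}^2 + \|P^\L(P-P^\L)\|_{\mathrm F}^2\bigr)$ — but $\|D^\M\|_\op$ can be as large as $\asymp M\cdot(\text{number of surviving large terms})$, so again I should instead split $D^\M = D^\MK + (D^\M - D^\MK)$, bound the $D^\MK$ part in operator norm by $O(MK)$ (a constant power of $L$) times $\frac1N\|P-P^\L\|_{\mathrm F}^2$, and handle $D^\M-D^\MK$ together with the previous step. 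The key deterministic input is the tail decay $|P(k,l)| \le \frac{C}{N|\mathrm{dist}|}$ for odd $|k-l|$ on the discrete circle, giving $\frac1N\|P-P^\L\|_{\mathrm F}^2 \le \frac{C}{N}\sum_{k}\sum_{L<|k-l|_{\mathrm{circ}}<2N-L}\frac{1}{|k-l|_{\mathrm{circ}}^2} \le \frac{C'}{L}$, so that $MK \cdot \frac1L = L^{2/9}/L = L^{-7/9}\to 0$. This is precisely where the exponent $1/9$ in \eqref{eq:trunc_assump} is used, and where I would expect to spend the most care getting the powers of $L$ to line up.

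\textbf{The $D^\M\to D^\MK$ link $\sH\to\tH$.} Finally $\sH-\tH = P^\L(D^\M-D^\MK)P^\L$ with $\|P^\L\|_\op = O(1)$ (Lemma~\ref{lem pi_L bounded} and its matrix analogue), so $\frac1N\|\sH-\tH\|_{\mathrm F}^2 \le \frac{C}{N}\|D^\M-D^\MK\|_{\mathrm F}^2$, and $d_k^\M - d_k^\MK = 2\sum_{j\ge K}b_{(j)}^\M\cos(2\pi k\sigma_N(j)/2N)$, whose squared Frobenius norm summed over $k$ and averaged is, by the same orthogonality-of-characters computation as before, $\asymp \sum_{j\ge K}\E[(|b_{(j)}|\wedge M)^2] \le \sum_{j\ge K}\E[b_{(j)}^2\wedge M^2]$. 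Since this is a tail sum of squared order statistics of a normalized heavy-tailed sequence, it $\to 0$ as $K\to\infty$ (for $\alpha < 2$, $\sum_{j\ge 1}c_N^{-2}a_{(j)}^2$ converges in distribution to $\sum_{j}\Gamma_j^{-2/\alpha}<\infty$ a.s., and the tail sums vanish). Note this link and part (ii) of the $H\to\fH$ link are really the same estimate and I would prove it once. Assembling \eqref{eq:four_links} from these four bounds and applying the cube-root triangle inequality for $\dl$ yields the proposition; the only place where both $L\to\infty$ and the $\limsup_N$ matter simultaneously is the band link, and that is the step I flag as the main technical obstacle.
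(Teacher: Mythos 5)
Your overall architecture is on target: split $H \rightsquigarrow \fH \rightsquigarrow \sH \rightsquigarrow \tH$, use a Frobenius/Hoffman--Wielandt bound for the band and order-statistics links, and notice — correctly — that the naive Frobenius bound is too lossy for the $M$-truncation link $H\to\fH$, so a rank-type argument is needed there. Your treatment of $\fH\to\sH$ (split out $D^\MK$ and control it in operator norm against $\|P-P^\L\|_\f^2/N\lesssim 1/L$) and of $\sH\to\tH$ (tail sum $\sum_{j\ge K}\E (b_{(j)}^\M)^2\to 0$, i.e.\ Lemma~\ref{lem 4.6}) both work, modulo the minor point that $\|P^\L\|_\op$ is $O(\log L)$ rather than $O(1)$, which is harmless for the power counting.

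However, step~(i) of your fix for the $H\to\fH$ link contains a genuine gap. You propose to ``peel off the contribution of the $K$ largest $|b_j|$'s, using the rank inequality to absorb a $O(K/N)$ error.'' This does not work: removing even a single $b_j$ from the input sequence changes \emph{every} diagonal entry $d_k$ of $D$ (since $d_k - d_k' = 2 b_j \cos(2\pi jk/2N)$ is nonzero for essentially all $k$), so the perturbation $D - D'$ is a diagonal matrix of full rank $\approx 2N$, not rank $O(K)$. The rank inequality therefore gives $\dks \lesssim 1$, not $O(K/N)$. (Moreover, the top $K$ contributions are precisely what one wants to \emph{keep} — they constitute $\tD$ — so ``peeling them off'' also heads in the wrong direction.) The paper's actual argument for Lemma~\ref{lem 4.4} is different in a crucial way: it is effectively a union bound dressed in rank-inequality clothing. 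One truncates at level $M$; with probability at least $1 - N\sP(|a_0|>Mc_N) \to 1 - M^{-\alpha}$ no $b_j$ exceeds $M$, so $D=D^\M$ and the rank of $H - \fH$ is exactly $0$; on the complementary event, $\dks \le 1$. Hence $\E\,\dks(\mu_H,\mu_{\fH}) \le N\sP(|a_0|\ge Mc_N)\to M^{-\alpha}\to 0$. No per-entry rank-$1$ accounting is involved, and the $K/N$ bookkeeping in your step~(i) has no justification.

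One further small error: your opening Frobenius computation for $H\to\fH$ claims $\E\bigl[c_N^{-2}\sum_j(|a_j|-c_NM)_+^2\bigr]\to C_\alpha M^{2-\alpha}$, but since $\E |a_0|^2 = \infty$ for $\alpha<2$, the quantity $\E(|a_0|-c_NM)_+^2$ is actually infinite, so this expectation is $+\infty$, not $O(M^{2-\alpha})$. (The finite quantity $M^{2-\alpha}$ that appears in the paper is $N\E(b_0^\M)^2$ with $b_0^\M$ \emph{capped} at $M$, which is a different object.) This makes your conclusion that Frobenius fails even more forceful, but the displayed computation is incorrect as written.
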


In the final step, we show that the $U$-averaged spectral measure of $\Delta$ at $e_0$ is well approximated by that corresponding to $\tD$.

\begin{prop}\label{prop 5.1}
Assume \eqref{eq:trunc_assump}. Then
\begin{align*}
\lim_{L\to \infty} \E \dl(\E^\omega \nu_\Delta, \E^\omega \nu_{\tD}) = 0.
\end{align*}
\end{prop}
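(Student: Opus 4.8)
\textbf{Proof proposal for Proposition \ref{prop 5.1}.}

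The plan is to compare $\nu_\Delta = \nu_{\Delta, e_0}$ and $\nu_{\tD} = \nu_{\tD, e_0}$ through their Stieltjes transforms at a fixed point $z = iy$ with $y > 0$, and then upgrade this to control of the L\'evy distance. Since both operators are self-adjoint on $\ell^2(\Z)$ (the latter bounded, the former self-adjoint by Proposition \ref{prop 1.1}(b)), for each fixed $\omega \in \Omega_1$ we have the resolvent identity
\begin{align*}
\langle e_0, (\Delta - z)^{-1} e_0\rangle - \langle e_0, (\tD - z)^{-1} e_0\rangle = \langle e_0, (\Delta - z)^{-1} (\tD - \Delta)(\tD - z)^{-1} e_0\rangle,
\end{align*}
so that the difference is bounded by $y^{-2} \|(\tD - \Delta)(\tD - z)^{-1} e_0\|_2$, and it suffices to show this tends to $0$ in the appropriate averaged sense as $L \to \infty$ (recall $K = M = L^{1/9} \to \infty$ with $L$). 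The difference $\Delta - \tD = \Pi \Lambda \Pi - \Pi^\L \Lambda^\MK \Pi^\L$ splits into three pieces by a telescoping argument: (i) replacing $\Pi$ by $\Pi^\L$ on both sides, (ii) replacing $\Lambda$ by $\Lambda^\M$, i.e.\ truncating $\Gamma_j$ at $M^{-\alpha}$, and (iii) replacing $\Lambda^\M$ by $\Lambda^\MK$, i.e.\ keeping only the first $K$ Poisson points. I would bound the contribution of each piece separately when applied to the vector $(\tD-z)^{-1}e_0$, which lies in $\ell^2(\Z)$ with norm at most $y^{-1}$.

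The key quantitative inputs are the following. For piece (i): $\Pi e_l$ and $(\Pi - \Pi^\L)e_l$ have the explicit $O(1/|k-l|)$ tails from \eqref{eqn 1.3}, so $\|(\Pi - \Pi^\L)e_l\|_2 = O(L^{-1/2})$ uniformly in $l$; combined with Proposition \ref{prop 1.1}(a), which gives $\sum_k (1+k^2)^{-1}(\varrho_k^\omega)^2 < \infty$ $\bP^\omega$-a.s., one controls $\|\Lambda(\Pi - \Pi^\L)v\|_2$ for $v \in \ell^2$ by combining Cauchy–Schwarz with the decay of the coefficients of $(\Pi-\Pi^\L)v$. For pieces (ii) and (iii): the differences $\varrho_k - \varrho_k^\M$ and $\varrho_k^\M - \varrho_k^\MK$ involve only the Poisson points $\Gamma_j$ with $\Gamma_j < M^{-\alpha}$ (resp.\ with index $j \ge K$), and on $\Omega_1$ the number of such small points, and the sum $\sum_{j : \Gamma_j < M^{-\alpha}} \Gamma_j^{-1/\alpha}$ or $\sum_{j \ge K}\Gamma_j^{-1/\alpha}$, is controlled; taking $\bE^\omega$ (expectation over $U$) makes the cosine terms orthogonal, so $\bE^\omega (\varrho_k - \varrho_k^\M)^2 = 2\sum_{j: \Gamma_j < M^{-\alpha}} \Gamma_j^{-2/\alpha}$ independent of $k$, and similarly for (iii) with $2\sum_{j \ge K}\Gamma_j^{-2/\alpha}$. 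Since $\alpha < 2$, on $\Omega_1$ both of these tend to $0$ as $M, K \to \infty$ (for (ii), the sum over $\Gamma_j \in (0, M^{-\alpha})$ of $\Gamma_j^{-2/\alpha}$ is a.s.\ finite and vanishes as $M^{-\alpha} \downarrow 0$; for (iii), $\sum_j \Gamma_j^{-2/\alpha} < \infty$ a.s.). Feeding these into the weighted-$\ell^2$ bound via $\|D^\M(\tD-z)^{-1}e_0\| $-type estimates and Proposition \ref{prop 1.1}(a), one gets $\bE \|(\tD - \Delta)(\tD-z)^{-1}e_0\|_2^2 \to 0$.

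Finally I would translate Stieltjes-transform closeness into L\'evy-distance closeness: pointwise convergence of $\bE^\omega\langle e_0,(\Delta-z)^{-1}e_0\rangle - \bE^\omega\langle e_0,(\tD-z)^{-1}e_0\rangle \to 0$ for all $z \in \C \setminus \R$ (after dominated convergence in $U$, justified by the uniform $y^{-1}, y^{-2}$ bounds) implies $\bE^\omega\nu_\Delta - \bE^\omega\nu_{\tD} \to 0$ weakly, hence in L\'evy distance, and then bounded convergence in $\omega$ gives $\E\,\dl(\E^\omega\nu_\Delta, \E^\omega\nu_{\tD}) \to 0$; alternatively one can get an explicit $\dl$ bound via the smoothing/Cauchy-kernel inequality relating $\dl$ to the difference of Stieltjes transforms on a horizontal line. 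I expect the main obstacle to be piece (i): because $\Lambda$ is unbounded when $\alpha \in [1,2)$, one cannot simply write $\|\Lambda(\Pi - \Pi^\L)v\|$ in operator-norm terms, and must argue carefully coordinate-by-coordinate, using that $v = (\tD - z)^{-1}e_0$ is not just in $\ell^2$ but — since $\tD$ is built from $\Pi^\L$ and a diagonal operator — has enough decay/regularity to be paired against the slowly-decaying $\varrho_k^\omega$; handling this unboundedness while keeping all estimates uniform in $N$-free quantities and integrable in $\omega$ is the delicate point.
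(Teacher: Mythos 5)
Your high-level decomposition of $\Delta - \tD$ into the three truncation pieces (band truncation of $\Pi$, truncation of $\Lambda$ at level $M$, restriction to the first $K$ Poisson points) is exactly the decomposition the paper uses, and your final step (Stieltjes/weak convergence $\Rightarrow$ L\'evy convergence $\Rightarrow$ bounded convergence in $\omega$) also matches. However, the mechanism you propose for comparing the resolvents has a genuine gap that your plan does not resolve. The second resolvent identity
\[
(\Delta - z)^{-1} - (\tD - z)^{-1} = (\Delta - z)^{-1}(\tD - \Delta)(\tD - z)^{-1}
\]
only makes sense on $e_0$ if $(\tD - z)^{-1}e_0$ lies in the domain of $\Delta$, i.e.\ if $\Lambda\,\Pi\,(\tD - z)^{-1}e_0 \in \ell^2(\Z)$. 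For $\alpha \in [1,2)$ the operator $\Lambda$ is unbounded, and $(\tD - z)^{-1}e_0$ is just some $\ell^2$ vector depending on $z$, $L$, and (crucially) on $U$; nothing guarantees it has the polynomial decay needed to pair against the slowly growing $\varrho_k$. You flag this yourself as ``the delicate point'' but offer no argument, and it is precisely the point where the scheme breaks. A related secondary problem: your orthogonality computations $\bE^\omega(\varrho_k - \varrho_k^{\M})^2$ and $\bE^\omega(\rho_{k,K})^2$ are correct and uniform in $k$, but they only help when the vector being hit by $\Lambda - \Lambda^{\M}$ etc.\ is deterministic in $U$; since $(\tD - z)^{-1}e_0$ depends on $U$, the expectation $\bE^\omega\|(\Lambda - \Lambda^{\M})\Pi(\tD-z)^{-1}e_0\|^2$ does not factor, and the clean $\sum_{j\ge K}\Gamma_j^{-2/\alpha}$ bounds no longer apply directly.

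The paper avoids both problems by never applying the perturbation to a resolvent vector. It shows that the set $\cC$ of finitely supported vectors is a common core for $\Delta$ and $\tD$ (Lemma \ref{lem core}), proves $\tD v \to \Delta v$ only for $v \in \cC$ (Lemma \ref{lem strong_conv}) — where the target $\Pi e_0$ is deterministic with explicit $O(1/(1+|k|))$ decay — and then invokes the abstract theorem (\cite[Theorem VIII.25(a)]{Ree}) that convergence on a common core implies strong resolvent convergence; this is what legitimately converts the core estimates into Stieltjes-transform convergence without ever needing $(\tD-z)^{-1}e_0 \in \D$. Since the paper works pointwise in $U$ rather than after averaging, the term (iii) requires a domination uniform in $K$, namely $\sum_k (1+k^2)^{-1}\sup_{K\ge 0}\rho_{k,K}^2 < \infty$ $\bP^\omega$-a.s., which is the content of Lemma \ref{lem key_estimate} and is proved via Dudley's entropy integral and Talagrand's concentration inequality for the Rademacher process $(\langle v,\eta_k\rangle)_{v\in\A}$ — a step your proposal replaces with the (insufficient in this setting) $\bE^\omega$-orthogonality computation. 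To repair your argument you would either have to establish the required decay of $(\tD - z)^{-1}e_0$ uniformly in $L$ and $U$ (which seems hard), or retreat to the core-plus-strong-resolvent-convergence route the paper takes.
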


The results in the three propositions above are tied together by the following lemma to establish that $\mu_H \Rrightarrow \E^\omega\nu_\Delta$.

\begin{lemma}\label{lem 2.4}
For fixed $K, L, M \ge 1$, consider two sequences of random probability measures $ (\mu_N^\MLK)_{N \ge 1}$ and $ \vartheta^\MLK$ defined on a common probability space such that $\mu_N^\MLK \ed \mu_{\tH}$, and $\vartheta^\MLK \ed \E^\omega \nu_{\tD}$. Assume that $K = K(L)$ and $M = M(L)$ are functions of $L$ such that $K(L), M(L) \to \infty$ as $L \to \infty$. Suppose further that the following are satisfied:
\begin{enumerate}
\item $\lim_{L \to\infty} \limsup_N \E \dl(\mu_{\tH},\mu_H) = 0$,
\item $\lim_{L\to \infty} \E \dl(\E^\omega\nu_{\tD}, \E^\omega\nu_\Delta) = 0$, and 
\item for each $L\geq 1$, $ \lim_{N\to\infty} \dl\lp \mu_N^\MLK, \vartheta^\MLK  \rp = 0$ almost surely. 
\end{enumerate}
Then, $\mu_H \Rrightarrow \E^\omega \nu_\Delta,$ as $N \to \infty$.
\end{lemma}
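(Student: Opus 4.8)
\emph{Proof proposal.} The plan is to run the standard ``convergence together'' argument for weak convergence in the complete separable metric space $(\mathcal{M},\dl)$, with the double index being the matrix size $N$ and the truncation level $L$ (so that $K=K(L)$, $M=M(L)$ are slaved to $L$). Recall that $\mu_H=\mu_H$ is a random element of $\mathcal{M}$ on $\Xi$, $\E^\omega\nu_\Delta$ is a random element of $\mathcal{M}$ on $\Upsilon$, and that $\mu_H\Rrightarrow\E^\omega\nu_\Delta$ means exactly $\E\,\phi(\mu_H)\to\E\,\phi(\E^\omega\nu_\Delta)$ for every bounded continuous $\phi:\mathcal{M}\to\R$ (the two expectations being over $\Xi$ and $\Upsilon$ respectively). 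Since the bounded Lipschitz functions on a separable metric space form a convergence-determining class, it suffices to prove this for every bounded $f:\mathcal{M}\to\R$ that is Lipschitz with respect to $\dl$. Working with Lipschitz test functions is the crucial device here: a general bounded continuous functional on the non-compact space $\mathcal{M}$ need not be uniformly continuous, whereas for Lipschitz $f$ one has $|f(\mu)-f(\nu)|\le\|f\|_{\mathrm{Lip}}\,\dl(\mu,\nu)\le\|f\|_{\mathrm{Lip}}$, which converts the Lévy-distance bounds of hypotheses (1) and (2) directly into bounds on differences of $\E\,f(\cdot)$.

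First I would fix $L$ (hence $K$, $M$) together with a bounded Lipschitz $f$ and send $N\to\infty$. Hypothesis (3) gives $\dl(\mu_N^\MLK,\vartheta^\MLK)\to0$ almost surely, so by continuity and boundedness of $f$ and dominated convergence $\E\,f(\mu_N^\MLK)\to\E\,f(\vartheta^\MLK)$; invoking the distributional identities $\mu_N^\MLK\ed\mu_{\tH}$ and $\vartheta^\MLK\ed\E^\omega\nu_{\tD}$ (equality of laws on $\mathcal{M}$, so the Borel function $f$ has the same expectation) this becomes
\[
\E\,f(\mu_{\tH})\longrightarrow\E\,f(\E^\omega\nu_{\tD})\qquad(N\to\infty),
\]
for each fixed $L$. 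Then, for arbitrary $N$ and $L$, I would split by the triangle inequality,
\[
\bigl|\E\,f(\mu_H)-\E\,f(\E^\omega\nu_\Delta)\bigr|\le\|f\|_{\mathrm{Lip}}\,\E\,\dl(\mu_H,\mu_{\tH})+\bigl|\E\,f(\mu_{\tH})-\E\,f(\E^\omega\nu_{\tD})\bigr|+\|f\|_{\mathrm{Lip}}\,\E\,\dl(\E^\omega\nu_{\tD},\E^\omega\nu_\Delta),
\]
take $\limsup_{N\to\infty}$ to annihilate the middle term via the previous display, and then — the left-hand side being independent of $L$ — let $L\to\infty$ and invoke hypotheses (1) and (2) (recall $K(L),M(L)\to\infty$) to kill the two surviving terms. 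This yields $\E\,f(\mu_H)\to\E\,f(\E^\omega\nu_\Delta)$ for all bounded Lipschitz $f$, hence for all bounded continuous $\phi$ by the convergence-determining property, which is precisely $\mu_H\Rrightarrow\E^\omega\nu_\Delta$.

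I do not expect a substantive obstacle here: all three inputs are exactly Propositions \ref{prop 3.1}, \ref{prop 4.2}, and \ref{prop 5.1}, and the argument is otherwise bookkeeping. The only points that need care are (i) the reduction to Lipschitz test functions — without it, the merely $L^1$-type control of $\dl$ in hypotheses (1)--(2) would not suffice against an arbitrary bounded continuous $\phi$ — and (ii) respecting the order of the iterated limit $\lim_{L\to\infty}\limsup_{N\to\infty}$. An equivalent route is to cite the standard triangular-array weak-convergence lemma verbatim, after observing that hypothesis (1) upgrades via Markov's inequality to $\lim_L\limsup_N\P(\dl(\mu_H,\mu_{\tH})>\varepsilon)=0$ for every $\varepsilon>0$, and hypothesis (2) upgrades to $\E^\omega\nu_{\tD}\Rrightarrow\E^\omega\nu_\Delta$ as $L\to\infty$, while the first step above supplies $\mu_{\tH}\Rrightarrow\E^\omega\nu_{\tD}$ as $N\to\infty$ for each fixed $L$.
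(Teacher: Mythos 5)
Your proof is correct and follows essentially the same route as the paper's: the same three-term triangle-inequality decomposition, the same iterated $\lim_{L\to\infty}\limsup_{N\to\infty}$, and the same use of the distributional identities plus dominated convergence to dispose of the middle term. The only difference is cosmetic: you reduce to bounded Lipschitz test functions so that hypotheses (1)--(2) bound the two outer terms in one line via $|f(\mu)-f(\nu)|\le\|f\|_{\mathrm{Lip}}\,\dl(\mu,\nu)$, whereas the paper reduces to bounded uniformly continuous $\phi$ and handles the first term with an $\varepsilon$--$\delta$ split combined with Markov's inequality.
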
 
\begin{proof}
By the portmanteau theorem (\cite[Theorem 2.1]{Bil2}), it suffices to show that for any uniformly continuous and bounded function $\phi:\mathcal{M} \to \R$, $\E \phi(\mu_H) \to \E \phi(\E^\omega\nu_\Delta)$ as $N\to \infty$. To that extent we have
\begin{align}
    \lim_{N\to\infty}|\E \phi(\mu_H) - \E \phi(\E^\omega\nu_\Delta)| & \leq \lim_{L\to \infty}\limsup_N|\E \phi(\mu_H) - \E \phi(\mu_{\tH})| \label{eq:lem 2.4.0}\\
    &  \qquad +\lim_{L\to \infty}\lim_{N\to \infty}|\E \phi(\mu_N^{\tH}) - \E \phi(\vartheta^{\MLK})|  \label{eq:lem 2.4.1}\\
    & \qquad + \lim_{L\to \infty}| \E \phi(\E^\omega\nu_{\tD}) - \E \phi(\E^\omega\nu_\Delta)| \label{eq:lem 2.4.2} 
\end{align}
The term in \eqref{eq:lem 2.4.2} vanishes because of the second assumption. The term in \eqref{eq:lem 2.4.1} vanishes by the dominated convergence theorem and the third assumption in the lemma. We now show that the right-hand side of \eqref{eq:lem 2.4.0} vanishes as well. Fix $\varepsilon > 0$. By the uniform continuity of $\phi$, there exists a $\delta >0$ such that for $\nu_1$, $\nu_2 \in \mathcal{M}$ satisfying $\dl(\nu_1,\nu_2) \leq \delta$, $|\phi(\nu_1) - \phi(\nu_2)| \leq \varepsilon$. Then \begin{align*}
	|\E \phi(\mu_H) - \E \phi(\mu_{\tH})| & \leq \E |\phi(\mu_H) - \phi(\mu_{\tH})|\1_{\dl(\mu_H, \mu_{\tH}) \leq \delta} \\
    & \qquad + \E |\phi(\mu_H) - \phi(\mu_{\tH})|\1_{\dl(\mu_H, \mu_{\tH}) > \delta}\\
	& \leq \varepsilon \P (\dl(\mu_{\tH}, \mu_H) \leq \delta) + 2\|\phi\|_\infty \mathbb{P} (\dl(\mu_{\tH}, \mu_H) > \delta) \\
	& \leq \varepsilon + \frac{2\|\phi\|_\infty}{\delta}\E \dl (\mu_{\tH}, \mu_H). 
\end{align*}
Since $\varepsilon$ is arbitrary, the result follows from the first assumption of the lemma.
\end{proof}

Recall identity \eqref{eq:esd_toep_circ} that relates the empirical spectral distributions of $T$ and $PD^\circ P$. Once we show that the replacement of $D^\circ$ by $D = D^\circ + b_0 I$ does not affect the limiting distribution (see Lemma \ref{lem 4.2}) we can conclude that $\mu_T \Rrightarrow 2\E^\omega \nu_\Delta - \delta_0$. Thus, it remains to show that 
\begin{align}\label{eq: avg_sp_measure}
    2\E^\omega \nu_\Delta - \delta_0 = \E^\omega \nu_{\Delta, u}.
\end{align}
In Lemma \ref{lem spec_measure_at_u} we prove that almost surely, $2\nu_\Delta - \delta_0 = \nu_{\Delta, u}$. Since the random element $\omega$ is independent of $U$, for any bounded measurable function $f:\R \to \R$, we have \begin{align*}
    \int f d(2\E^\omega \nu_\Delta - \delta_0) = \E^\omega \int f d(2\nu_\Delta - \delta_0) = \E^\omega \int f d(\nu_{\Delta,u}) = \int f d(\E^\omega \nu_{\Delta,u}),
\end{align*}
establishing \eqref{eq: avg_sp_measure} and completing the proof of Theorem \ref{main_thm}.

\subsection{Organization of the paper}
The rest of the paper is organized as follows. In Sections  \ref{sec 3} and \ref{sec 4} we prove Propositions \ref{prop 3.1} and \ref{prop 4.2} respectively. While Propositions \ref{prop 1.1} and \ref{prop 5.1} are proved in Section \ref{sec 5}, the proof of Theorem \ref{thm 1.3} is the content of Section \ref{sec 6}.

\section{Convergence of the truncated spectral measures}\label{sec 3}

In this section, we shall prove Proposition \ref{prop 3.1}. For the rest of this section, we shall assume that $K$, $L$ and $M$ are fixed positive integers. Recall that we need to show that there exist random probability measures $ (\mu_N^\MLK)_{N \ge 1}$ and $ \vartheta^\MLK$   on a common probability space such that $\mu_N^\MLK \ed \mu_{\tH}$, $\vartheta^\MLK \ed \E^\omega \nu_{\tD}$ and almost surely, $\dl\lp \mu_N^\MLK, \vartheta^\MLK  \rp \to 0$ as $N\to\infty$.

At the heart of our proof lies Weyl's equidistribution criterion for measures on the multi-dimensional torus, which we state below and include a proof of for completeness. 
\begin{lemma}[Weyl's equidistribution criterion]\label{lem a1}
Let $(\mu_n)_{n\geq 1}$ be probability measures on the $m$-dimensional torus $\mathbb{T}^m = [0,1]^m$. Then, $(\mu_n)_{n\geq 1}$ converges weakly to the uniform distribution on $\mathbb{T}^m$ if and only if for every $h \in \Z^m\setminus \{0\}$, \begin{align}\label{lem a1 eqn 1}
\int_{\mathbb{T}^m} e^{2\pi i \langle h,x\rangle} d\mu_n(x) \to 0.
\end{align}
\end{lemma}

\begin{proof}
Let $\mu_0$ be the uniform distribution on $\mathbb{T}^m$. By the portmanteau theorem, $\mu_n \Rightarrow \mu_0$ if and only if for every bounded continuous complex-valued function $f$ on $\mathbb{T}^m$, \begin{align}\label{lem a1 eqn 2}
\int_{\mathbb{T}^m} f d\mu_n \to \int_{\mathbb{T}^m} f d\mu_0.
\end{align}
Since $\mathbb{T}^m$ is compact, it suffices to consider continuous complex-valued $f$ in the above display as they are automatically bounded. Choosing $f(x) = e^{2\pi i \langle h,x\rangle}$, for every $h \in \Z^m\setminus \{0\}$, we have 
\begin{align*}
\int_{\mathbb{T}^m} e^{2\pi i \langle h,x\rangle} d\mu_n(x) & \to \int_{\mathbb{T}^m} e^{2\pi i \langle h,x\rangle} d\mu_0(x) = \int_{[0,1]^m} e^{2\pi i \langle h,x\rangle} dx = 0.
\end{align*} 
This establishes the necessity of $\mu_n \Rightarrow \mu_0$ to guarantee \eqref{lem a1 eqn 1}. To show that \eqref{lem a1 eqn 1} is sufficient, we first note that \eqref{lem a1 eqn 1} implies that
\[\int_{\mathbb{T}^m} e^{2\pi i \langle h,x\rangle} d\mu_n(x) \to \int_{\mathbb{T}^m} e^{2\pi i \langle h,x\rangle} d\mu_n(x), \]
for every $h \in \Z^m$, including $h=0$ for which the convergence trivially holds since both sides are equal to $1$. Now by the Weierstrass approximation theorem that any continuous complex-valued function on $\mathbb{T}^m$ can be approximated arbitrarily closely in the uniform norm by a finite linear combination of the functions $(e^{2\pi i \langle h,\cdot\rangle})_{h \in \Z^m}$, \eqref{lem a1 eqn 2} follows.
\end{proof}

As an application of Weyl's equidistribution criterion, we prove the following.
\begin{lemma} \label{lem 3.2}
Let $\tau_N : [N]\to [N]$ be a deterministic sequence permutations such that as $N\to \infty$,
\begin{align} \label{lem 3.2 eqn 1}
\Bigl(\frac{\tau_N(0)}{2N}, \ldots, \frac{\tau_N(K-1)}{2N}\Bigr) \to (\xi_0, \ldots, \xi_{K-1})
\end{align}
 for some $\xi_0, \ldots, \xi_{K-1} \in [0,1/2]$ linearly independent over $\Z$.  Let $\Theta_N$ be a random variable uniform on $[2N]$. Then, \begin{align*}
\Bigl(\bb{\frac{\Theta_N\tau_N(0)}{2N}}, \ldots, \bb{\frac{\Theta_N\tau_N(0)}{2N}}\Bigr) \Rightarrow (V_0, \ldots, V_{K-1})
\end{align*}
where $(V_j)_{j\in[K]}$ are i.i.d.\  random variables uniform on $[0, 1]$.
\end{lemma}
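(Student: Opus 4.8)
The plan is to apply Weyl's equidistribution criterion (Lemma \ref{lem a1}) directly to the sequence of random vectors
\[
X_N := \Bigl(\bigl\{\tfrac{\Theta_N\tau_N(0)}{2N}\bigr\}, \ldots, \bigl\{\tfrac{\Theta_N\tau_N(K-1)}{2N}\bigr\}\Bigr) \in \mathbb{T}^K,
\]
whose law I will call $\mu_N$. To verify the criterion, I fix $h = (h_0,\ldots,h_{K-1}) \in \Z^K \setminus \{0\}$ and compute the characteristic-function-type average
\[
\int_{\mathbb{T}^K} e^{2\pi i \langle h, x\rangle}\, d\mu_N(x) = \E\, \exp\Bigl(2\pi i\, \Theta_N \sum_{j=0}^{K-1} h_j \tfrac{\tau_N(j)}{2N}\Bigr) = \frac{1}{2N}\sum_{\theta=0}^{2N-1} \exp\Bigl(2\pi i\, \theta\, \tfrac{s_N}{2N}\Bigr),
\]
where $s_N := \sum_{j=0}^{K-1} h_j \tau_N(j) \in \Z$. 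This last sum is a geometric sum: it equals $1$ if $2N \mid s_N$ and equals $0$ otherwise (since $\sum_{\theta=0}^{2N-1} \omega^\theta = 0$ when $\omega = e^{2\pi i s_N/(2N)} \neq 1$). So the whole problem reduces to showing that, for $N$ large, $s_N$ is \emph{not} divisible by $2N$.

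The key step is therefore the arithmetic fact that $s_N/(2N) \not\in \Z$ eventually. Here I use the hypothesis \eqref{lem 3.2 eqn 1}: dividing by $2N$, we have $\tfrac{s_N}{2N} = \sum_{j=0}^{K-1} h_j \tfrac{\tau_N(j)}{2N} \to \sum_{j=0}^{K-1} h_j \xi_j$ as $N \to \infty$. Since $\xi_0, \ldots, \xi_{K-1}$ are linearly independent over $\Z$ and $h \neq 0$, the limit $\sum_j h_j \xi_j$ is nonzero; in fact it is irrational, but nonzero is all I need. Hence for all sufficiently large $N$ the quantity $\tfrac{s_N}{2N}$ lies in a fixed interval bounded away from $0$ (and from every other integer, since it converges), so $\tfrac{s_N}{2N} \not\in \Z$, giving $\int_{\mathbb{T}^K} e^{2\pi i \langle h,x\rangle}\, d\mu_N = 0$ for large $N$, and in particular this average tends to $0$. (One small point to handle: if the limit $\sum_j h_j\xi_j$ happened to be a nonzero integer, the above argument about "bounded away from every integer" needs the convergence to pin $\tfrac{s_N}{2N}$ near that integer; but $\xi_j \in [0,1/2]$ linearly independent over $\Z$ forces $\sum_j h_j \xi_j$ irrational unless all $h_j = 0$, so the limit is automatically non-integer and this case does not arise.) By Lemma \ref{lem a1}, $\mu_N$ converges weakly to the uniform distribution on $\mathbb{T}^K$, which is exactly the law of $(V_0,\ldots,V_{K-1})$ with the $V_j$ i.i.d.\ uniform on $[0,1]$.

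I do not anticipate a genuine obstacle here — the argument is essentially a one-line geometric-sum computation combined with the linear-independence hypothesis — but the part requiring the most care is the arithmetic bookkeeping: correctly identifying that the relevant average is the normalized character sum $\tfrac{1}{2N}\sum_\theta e^{2\pi i \theta s_N/(2N)}$, observing it is exactly $\1\{2N \mid s_N\}$, and then arguing from $\tfrac{s_N}{2N} \to \sum_j h_j\xi_j \neq 0$ (indeed $\notin \Z$) that divisibility fails for all large $N$. One should also note at the outset that $\{x\}$ denotes fractional part, so that $e^{2\pi i \langle h, X_N\rangle} = e^{2\pi i \Theta_N s_N/(2N)}$ with no correction term, which is what makes the computation clean.
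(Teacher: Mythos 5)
Your proof is correct and follows essentially the same route as the paper's: apply Weyl's criterion (Lemma \ref{lem a1}), evaluate the resulting geometric character sum, and conclude by sending $(2N)^{-1}\sum_k h_k\tau_N(k)\to\sum_k h_k\xi_k\neq 0$. Your version is slightly sharper in noting that the normalized sum is exactly $\1\{2N\mid s_N\}$ rather than merely $O(1/N)$, and you explicitly address the non-integrality of $\sum_k h_k\xi_k$ (a point the paper's one-line justification ``non-zero since the $\xi_k$ are linearly independent over $\Z$'' leaves implicit); in the application, the $\xi_k=\zeta_k$ are almost surely rationally independent jointly with $1$, so the potentially problematic case you flag indeed does not arise.
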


\begin{proof}
By Lemma \ref{lem a1}, it suffices to show that for $h \in \Z^K\setminus \{0\}$,  \begin{align*}
\frac{1}{2N}\sum_{j = 0}^{2N-1} \exp\Bigl(2\pi i \sum_{k = 0}^{K-1} h_k\bb{\frac{j\tau_N(k)}{2N}}\Bigr) \to 0 
\end{align*}
 as $N \to \infty$. Indeed, \begin{align*} 
	\frac{1}{2N}\sum_{j = 0}^{2N-1} \exp\Bigl(2\pi i \sum_{k = 0}^{K-1} h_k\bb{\frac{j\tau_N(k)}{2N}}\Bigr) & = \frac{1}{2N}\sum_{j = 0}^{2N-1} \exp\Bigl(2\pi i \sum_{k = 0}^{K-1} h_k\frac{j\tau_N(k)}{2N}\Bigr) \\
	& = \frac{1}{2N}\frac{1 - \exp(2\pi i \sum_{k=0}^{K-1}h_k\tau_N(k))}{1 - \exp(2\pi i \sum_{k=0}^{K-1}\frac{h_k\tau_N(k)}{2N})}.
\end{align*}
By \eqref{lem 3.2 eqn 1}, $(2N)^{-1}\sum_{k=0}^{K-1}h_k\tau_N(k) \to \sum_{k=0}^{K-1} h_k \xi_k,$ which is non-zero since $\xi_k$'s are linearly independent over $\Z$. On the other hand, 
we can trivially upper bound $1 - \exp(2\pi i \sum_{k=0}^{K-1}h_k\tau_N(k))$ in absolute value by~$2$.
Therefore, the last line of the display above approaches 0 as $N\to \infty$.
\end{proof}

Recall the definition of $p$ from \eqref{eqn 1.2}. On  $(\Upsilon, \mathcal{G},\mathbf{P})$,  let $(\varepsilon_j)_{j\geq 0}$ be i.i.d.\ Rademacher random variables  satisfying \begin{align*}
\P(\varepsilon_0 = 1) = p = 1 - \P(\varepsilon_0 = -1)
\end{align*}
which are independent of all other randomness. Recall that $\sigma_N:[N] \to [N]$ is the uniform random permutation such that $|b_{\sigma_N(0)}|\geq  \ldots\geq  |b_{\sigma_N(N-1)}|$. The result in the next lemma is standard in the theory of heavy-tailed random variables (see, for example, \cite[Lemma 1]{Kni}).

\begin{lemma}\label{lem 3.3}
As $N\to \infty$, 
\begin{align*}
	 \Bigl( |b_{(j)}|, \sgn(b_{(j)}), \frac{\sigma_N(j)}{2N}\Bigr)_{j\in[K]}
	& \Rightarrow \Bigl(\Gamma_j^{-1/\alpha}, \varepsilon_j,\zeta_j\Bigr)_{j\in[K]}.
\end{align*}
\end{lemma}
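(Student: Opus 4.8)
\textbf{Proof plan for Lemma \ref{lem 3.3}.}

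The plan is to decompose the joint convergence into two nearly-independent pieces: the convergence of the top order statistics $(|b_{(j)}|)_{j \in [K]}$ together with their signs, and the convergence of the (rescaled) locations $(\sigma_N(j)/2N)_{j \in [K]}$, which are asymptotically independent of the magnitudes. First I would recall the classical extreme-value description of heavy-tailed order statistics: since $g(t) = \sP(|a_0| \ge t) = \ell(t) t^{-\alpha}$ is regularly varying with index $-\alpha$, the normalized point process $\sum_{j=0}^{N-1} \delta_{|a_j|/c_N}$ on $(0,\infty]$ converges (as $N \to \infty$) to a Poisson point process with intensity $\alpha x^{-\alpha-1}\,dx$, by the standard criterion in \cite{Fel} or as in \cite{Kni}. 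The ranked points of that limiting Poisson process are exactly $(\Gamma_j^{-1/\alpha})_{j \ge 0}$, where $(\Gamma_j)$ are unit-rate Poisson arrival times; this yields $(|b_{(j)}|)_{j\in[K]} \Rightarrow (\Gamma_j^{-1/\alpha})_{j\in[K]}$. For the signs: conditionally on $|a_0| = t$ with $t$ large, the event $\{a_0 > 0\}$ has probability $\sP(a_0 \ge t)/\sP(|a_0|\ge t) \to p$ by \eqref{eqn 1.2}, so the signs of the top $K$ entries, conditionally on their being the large ones, converge to i.i.d.\ Rademacher$(p)$ variables $(\varepsilon_j)$, independent of the limiting magnitudes because the conditional sign probability depends on $t$ only through its being large.

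Next I would handle the locations. For the $N$ absolute values $|b_0|, \ldots, |b_{N-1}|$, which are i.i.d.\ continuous (ties have probability zero, or one breaks them uniformly at random as stipulated in the paper), the permutation $\sigma_N$ mapping rank to location is uniform on the symmetric group $S_N$ and is \emph{independent} of the vector of order statistics $(|b_{(j)}|)_j$. Hence $(\sigma_N(0), \ldots, \sigma_N(K-1))$ is a uniform random injection from $[K]$ into $[N]$, and after rescaling by $2N$ one gets that $(\sigma_N(j)/2N)_{j\in[K]}$ converges to $(\zeta_j)_{j\in[K]}$, i.i.d.\ uniform on $[0,1/2]$ — the factor $1/2$ coming precisely from $N/2N$. (The "without replacement" correction is negligible for fixed $K$ as $N\to\infty$.) Combining this independence with the previous paragraph gives the full joint statement: magnitudes and signs converge jointly to $(\Gamma_j^{-1/\alpha}, \varepsilon_j)_{j\in[K]}$, independently of the locations which converge to $(\zeta_j)_{j\in[K]}$, and all three limiting families are mutually independent as required.

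The main obstacle — really the only non-bookkeeping point — is to make the joint convergence and the asserted independence structure fully rigorous rather than handling the three coordinates separately. The cleanest route is to work on a common probability space via a Skorokhod-type coupling, or, following the cited \cite[Lemma 1]{Kni}, to verify convergence of the joint law directly: compute, for bounded continuous test functions of $(|b_{(j)}|, \sgn(b_{(j)}), \sigma_N(j)/2N)_{j\in[K]}$, the limit by first conditioning on the set of indices achieving the top $K$ absolute values and on those values, using that (i) the conditional law of the locations is uniform among $K$-subsets hence asymptotically i.i.d.\ uniform on $[0,1/2]$, (ii) the conditional law of the signs factorizes and tends to the Rademacher$(p)$ product law, and (iii) the top order statistics themselves converge as above. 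Since all limiting ingredients in the paper's environment $(\Gamma_j, \zeta_j, \varepsilon_j)_j$ are by construction independent, matching the limit is then immediate. I expect this to be routine given the heavy-tailed toolbox, which is why the paper cites \cite{Kni} rather than reproving it.
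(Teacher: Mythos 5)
Your plan is correct and is essentially the argument the paper itself relies on: the paper offers no proof of Lemma~\ref{lem 3.3}, deferring to \cite[Lemma 1]{Kni}, and that standard proof is precisely what you describe --- convergence of the signed point process $\sum_{j}\delta_{a_j/c_N}$ to a Poisson point process with intensity $\alpha p x^{-\alpha-1}\,dx$ on $(0,\infty)$ and $\alpha(1-p)|x|^{-\alpha-1}\,dx$ on $(-\infty,0)$, whose points ranked by modulus give $(\varepsilon_j\Gamma_j^{-1/\alpha})_j$, combined with the fact that for an i.i.d.\ sample the rank-to-location permutation $\sigma_N$ is uniform and independent of the order statistics, so that $(\sigma_N(j)/2N)_{j\in[K]}$ tends to i.i.d.\ uniforms on $[0,1/2]$. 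The one phrase to tighten is the sign step: condition \eqref{eqn 1.2} controls $\sP(a_0\geq t)/\sP(|a_0|\geq t)$, i.e.\ the conditional sign probability given the exceedance event $\{|a_0|\geq t\}$, not given $\{|a_0|=t\}$ pointwise, so the Rademacher$(p)$ limit for the signs should be read off from the two-sided point-process convergence (or from conditioning on threshold exceedances) rather than from a pointwise conditional law --- a cosmetic repair that your concluding paragraph already anticipates.
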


As a consequence, we have

\begin{lemma}\label{lem 3.4}
Let $\Theta_N$ be a random variable in $(\Xi, \mathcal{F},\mathsf{P})$ that is uniform on $[2N]$ and independent of $(a_j)_{j \ge 0}$. Then, as $N\to \infty$,
\begin{align*}
	\Bigl( |b_{(j)}|, \sgn(b_{(j)}), \frac{\sigma_N(j)}{2N}, \bb{\frac{\Theta_N\sigma_N(j)}{2N}}\Bigr)_{j\in[K]}
	& \Rightarrow \Bigl(\Gamma_j^{-1/\alpha}, \varepsilon_j, \zeta_j, U_j\Bigr)_{j\in[K]}
\end{align*}
and 
\begin{align*}
	\Bigl( b_{(j)}, \frac{\sigma_N(j)}{2N}, \bb{\frac{\Theta_N\sigma_N(j)}{2N}}\Bigr)_{j\in[K]}
	& \Rightarrow \Bigl(\varepsilon_j\Gamma_j^{-1/\alpha}, \zeta_j, U_j\Bigr)_{j\in[K]}.
\end{align*}
\end{lemma}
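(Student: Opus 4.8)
The statement to prove is Lemma 3.4, which upgrades the joint convergence in Lemma 3.3 by adjoining the coordinates $\{\Theta_N\sigma_N(j)/(2N)\}_{j\in[K]}$ and identifying their limit as i.i.d.\ uniforms on $[0,1]$ independent of the other limit variables. The plan is to condition on the permutation data and apply Weyl's criterion (Lemma 3.2) on the conditional law, then unfreeze.

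\medskip
\noindent\textbf{Proof proposal.} The plan is to deduce the joint convergence from Lemma~\ref{lem 3.3} and Lemma~\ref{lem 3.2} by a conditioning argument, using the Skorokhod representation theorem to turn the weak convergence in Lemma~\ref{lem 3.3} into an almost sure one. First I would invoke Skorokhod to realize, on a common probability space, random variables $\bigl(|b_{(j)}|, \sgn(b_{(j)}), \sigma_N(j)/(2N)\bigr)_{j\in[K]}$ (for each $N$) and $(\Gamma_j^{-1/\alpha}, \varepsilon_j, \zeta_j)_{j\in[K]}$ such that the former converges almost surely to the latter as $N\to\infty$; in particular $\sigma_N(j)/(2N) \to \zeta_j$ a.s.\ for each $j\in[K]$. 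Since $\omega\in\Omega_3$ with probability one, the limiting vector $(\zeta_0,\dots,\zeta_{K-1})$ is rationally independent almost surely. Now adjoin an independent uniform $\Theta_N$ on $[2N]$. Conditionally on the permutation data (equivalently, on $\tau_N := \sigma_N$), Lemma~\ref{lem 3.2} applies verbatim on the event that $(\sigma_N(j)/(2N))_{j\in[K]}$ converges to a rationally independent limit: it gives that, conditionally, $\bigl(\{\Theta_N\sigma_N(j)/(2N)\}\bigr)_{j\in[K]} \Rightarrow (U_0,\dots,U_{K-1})$, i.i.d.\ uniform on $[0,1]$, and crucially this conditional limit law does \emph{not} depend on the limiting values $(\zeta_j)$.

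\medskip
The second step is to combine the conditional convergence of the new coordinates with the a.s.\ convergence of the old ones. Since the limiting conditional law of $\bigl(\{\Theta_N\sigma_N(j)/(2N)\}\bigr)_{j\in[K]}$ is a fixed law (the uniform law on $[0,1]^K$), independent of the conditioning, standard results on convergence of conditional distributions yield that the pair
\[
\Bigl( \bigl(|b_{(j)}|, \sgn(b_{(j)}), \tfrac{\sigma_N(j)}{2N}\bigr)_{j\in[K]},\ \bigl\{\tfrac{\Theta_N\sigma_N(j)}{2N}\bigr\}_{j\in[K]} \Bigr)
\]
converges jointly in distribution to $\Bigl( (\Gamma_j^{-1/\alpha}, \varepsilon_j, \zeta_j)_{j\in[K]},\ (U_j)_{j\in[K]} \Bigr)$ where $(U_j)$ is independent of everything else. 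Concretely, I would verify this by checking convergence of $\E\bigl[ f\bigl((|b_{(j)}|,\sgn(b_{(j)}),\sigma_N(j)/(2N))_j\bigr)\, g\bigl(\{\Theta_N\sigma_N(j)/(2N)\}_j\bigr)\bigr]$ for bounded continuous $f,g$: condition on the permutation data to pull out $g$, apply Lemma~\ref{lem 3.2} (so the inner conditional expectation of $g$ converges a.s.\ to $\int g\,d(\mathrm{Unif}[0,1]^K)$), use boundedness for dominated convergence, and then apply the a.s.\ convergence from Skorokhod to the remaining $f$-factor. This gives the first displayed convergence. The second display follows from the first by the continuous-mapping theorem applied to $(x, s)\mapsto (\sgn(s)\cdot x, \dots)$, i.e.\ $b_{(j)} = \sgn(b_{(j)})|b_{(j)}| \Rightarrow \varepsilon_j\Gamma_j^{-1/\alpha}$, together with dropping the redundant $\sgn$ coordinate.

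\medskip
The main obstacle is making the conditioning argument rigorous, namely passing from the \emph{conditional} weak convergence supplied by Lemma~\ref{lem 3.2} (which is a statement about a fixed deterministic sequence of permutations with a convergent, rationally independent rescaling) to a statement about the \emph{random} permutations $\sigma_N$, and simultaneously retaining joint convergence with the permutation data itself. The delicate point is that Lemma~\ref{lem 3.2} requires the deterministic limit $(\xi_0,\dots,\xi_{K-1})$ to be rationally independent, and one must ensure this holds for the random limit $(\zeta_j)$ — which is exactly why $\Omega_3$ (equivalently, the a.s.\ rational independence of the $\zeta_j$) was built into the setup. One clean way to organize this is: on the Skorokhod space, for each $N$ consider the regular conditional distribution of $\{\Theta_N\sigma_N(j)/(2N)\}_j$ given $\sigma_N$; on the full-probability event where $\sigma_N(j)/(2N) \to \zeta_j$ with $(\zeta_j)$ rationally independent, the exact computation inside the proof of Lemma~\ref{lem 3.2} (the geometric-sum bound $|2N|^{-1}|1 - \exp(2\pi i \sum h_k \tau_N(k))| / |1 - \exp(2\pi i \sum h_k \tau_N(k)/(2N))| \to 0$) shows these conditional laws converge weakly to $\mathrm{Unif}[0,1]^K$; then integrate against bounded continuous test functions and apply dominated convergence. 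Everything else is routine bookkeeping with the continuous mapping theorem.
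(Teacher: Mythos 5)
Your proposal is correct and is essentially the same argument as the paper's: Skorokhod to upgrade Lemma~\ref{lem 3.3} to a.s.\ convergence, observe that $(\zeta_j)_{j\in[K]}$ is a.s.\ rationally independent (since its law is absolutely continuous), apply Lemma~\ref{lem 3.2} conditionally on the frozen permutation data, and use the fact that the conditional limit law $\mathrm{Unif}[0,1]^K$ does not depend on the conditioning to conclude joint convergence with independence of the $(U_j)$ block, with the second display following from the continuous mapping theorem. Your version just makes the test-function verification explicit, which the paper leaves implicit.
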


\begin{proof}
The second statement of the lemma immediately follows from the first  using the continuous mapping theorem. So, let us prove the first statement.

By the Skorokhod representation theorem, we may switch to a probability space where the convergence in Lemma \ref{lem 3.3} happens almost surely. Since the distribution of $(\zeta_0, \ldots, \zeta_{K-1})$ is absolutely continuous, $\zeta_0,\ldots, \zeta_{K-1}$ are linearly independent over $\Z$ with probability 1. We fix a realization so that we have the pointwise convergence of $X_N:=\big( |b_{(j)}|, \sgn(b_{(j)}), \tfrac{\sigma_N(j)}{2N}\big)_{j\in[K]}$ to $X:=\big(\Gamma_j^{-1/\alpha}, \varepsilon_j,\zeta_j\big)_{j\in[K]}$
  where $(\zeta_j)_{j \in [K]}$ are linearly independent over $\Z$. Given that, by Lemma \ref{lem 3.2}, the conditional distribution of $Y_N := \big(\big \{ \tfrac{\Theta_N\sigma_N(j)}{2N} \big\}\big)_{j\in [K]}$ converges in distribution to $Y:=(U_j)_{j \in [K]}$ and the limiting distribution is independent of the value of $X$. This shows the joint weak convergence of $(X_N, Y_N)$ to $(X, Y)$ and also that $X$ and $Y$ are independent of each other. 
  
\end{proof}

We need the following elementary estimate for the difference between the entries of $P$ and $\Pi$.

\begin{lemma} \label{lem 2.2}
We have
\begin{itemize}
\item[(a)] $|P(k, l)| \le 1$ for each $k, l \in [2N]$. Also, $|\Pi(k, l)| \le 1/2$ for $k, l \in \Z$.
\item[(b)]  For  $L \ge 1$, there exists a constant $C(L) $ such that  $|P(k,l) - \Pi(k,l)| \le  C(L) N^{-1}$ for any $k, l \in [2N]$ with $|k-l| \leq L$.
\end{itemize}
\end{lemma}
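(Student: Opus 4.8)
The plan is to reduce both parts to elementary estimates on the explicit entry formulas, treating the three cases separately: $k=l$; $k\neq l$ with $|k-l|$ even; and $|k-l|$ odd. In the first two cases $P$ and $\Pi$ agree (the entries are $\tfrac12$ and $0$ respectively), so all the content sits in the odd case.

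For part (a), writing $m=k-l$ and using $|1-e^{-i\theta}|=2|\sin(\theta/2)|$ with $\theta=\pi m/N$, I would express $|P(k,l)| = \bigl(2N|\sin(\pi m/(2N))|\bigr)^{-1}$; the remaining task is a uniform lower bound $|\sin(\pi m/(2N))|\ge 1/N$ valid for every odd $m$ with $1\le |m|\le 2N-1$. By the symmetry and $2N$-periodicity of $|\sin(\pi\,\cdot/(2N))|$ one reduces to $m\in\{1,\dots,2N-1\}$, where $t\mapsto\sin(\pi t/(2N))$ is positive, concave, and symmetric about $t=N$, so its minimum over that range is attained at the endpoints and equals $\sin(\pi/(2N))\ge\tfrac2\pi\cdot\tfrac{\pi}{2N}=\tfrac1N$ (using $\sin t\ge\tfrac2\pi t$ on $[0,\pi/2]$). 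This yields $|P(k,l)|\le\tfrac12\le 1$. For $\Pi$ the bound is immediate: off the diagonal $|\Pi(k,l)|=(\pi|k-l|)^{-1}\le 1/\pi<\tfrac12$, and on the diagonal $|\Pi(k,k)|=\tfrac12$.

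For part (b), with $m=k-l$ odd and $1\le|m|\le L$, I would factor $1-e^{-i\pi m/N}=2i\,e^{-i\pi m/(2N)}\sin(\pi m/(2N))$ and combine it with the identity $-\tfrac{i}{\pi m}=\tfrac{1}{2iN}\cdot\bigl(\tfrac{\pi m}{2N}\bigr)^{-1}$ to obtain
\[ P(k,l)-\Pi(k,l) = \frac{1}{2iN}\,g\!\Bigl(\tfrac{\pi m}{2N}\Bigr), \qquad g(x):=\frac{e^{ix}}{\sin x}-\frac1x = i + \Bigl(\cot x-\frac1x\Bigr). \]
Since $\cot x-1/x$ extends continuously across $0$ (with value $0$) and is therefore bounded on $[-\pi/2,\pi/2]$ by an absolute constant $C_0$, and since $\pi m/(2N)\in[-\pi/2,\pi/2]$ whenever $N\ge L$, this gives $|P(k,l)-\Pi(k,l)|\le C_0/(2N)$ for $N\ge L$. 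For the finitely many $N<L$ I would bound crudely via part (a), $|P(k,l)-\Pi(k,l)|\le|P(k,l)|+|\Pi(k,l)|\le 1\le L\,N^{-1}$, and then take $C(L)=\max(C_0/2,\,L)$.

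There is no serious obstacle here; the argument is essentially a routine computation. The only points needing a little care are the uniform lower bound on $|\sin(\pi m/(2N))|$ over all admissible $m$ in part (a) — handled by the periodicity reduction together with the endpoint-minimum observation — and, in part (b), ensuring that the argument $\pi m/(2N)$ lands in the range where $\cot x-1/x$ is controlled, which is precisely what forces the split into $N\ge L$ and $N<L$ (the latter harmlessly absorbed into the constant).
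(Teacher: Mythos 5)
Your proof is correct, but it takes a different (and more computational) route than the paper on both parts. For part (a), the paper avoids the explicit formula entirely: it observes that $P$ is a Hermitian projection, so $P = PP^*$ gives $P(k,k) = \sum_{l}|P(k,l)|^2$ and hence $|P(k,l)| \le \sqrt{P(k,k)} = 1/\sqrt{2}$ in one line, with no case analysis or trigonometry. Your direct estimate $|P(k,l)| = \bigl(2N|\sin(\pi(k-l)/(2N))|\bigr)^{-1} \le 1/2$ via the endpoint minimum of the sine is also valid and in fact yields the slightly sharper constant $1/2$, at the cost of the periodicity/concavity bookkeeping. For part (b), the paper simply cites \cite[(15)]{Sen2}, whereas you supply a self-contained argument; your factorization $1-e^{-i\pi m/N} = 2ie^{-i\pi m/(2N)}\sin(\pi m/(2N))$ and the identification $P(k,l)-\Pi(k,l) = \tfrac{1}{2iN}\bigl(i + \cot x - \tfrac1x\bigr)$ with $x = \pi m/(2N)$ is exactly the right computation, and your handling of the even and diagonal cases (where $P$ and $\Pi$ agree exactly) and of the finitely many $N < L$ is sound. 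One cosmetic slip: since $|g(x)| = |i + (\cot x - 1/x)| \le 1 + C_0$ rather than $C_0$, the final bound should read $(1+C_0)/(2N)$, which of course changes nothing.
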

\begin{proof}
(a) The estimate on the entries of $\Pi$ follows directly from its definition \eqref{eqn 1.3}. Note that $P$ is a Hermitian projection matrix. Hence, $P = P^2 = P P^*$, which yields
$P(k, k) = \sum_{l \in [2N]} |P(k, l)|^2$.
Therefore, $|P(k,l)|^2 \le P(k, k) = 1/2$.

(b) See \cite[(15)]{Sen2} for a proof.
\end{proof}

\subsection{Proof of Proposition \ref{prop 3.1}}

We first show that for each sample realization, the probability measure $\E^\omega\nu_{\tD}$ is determined by its moments. For $r \in \N$, let $$m(r) := \int_{\R} t^r \E^\omega\nu_{\tD}(dt) = \E^\omega \int_{\R} t^r \nu_{\tD}(dt).$$ 
Then, we have 
\begin{align}
\big|m(r)\big| & =\big|\E^\omega(\tD)^r(0,0)\big| \leq \E^\omega(|\Pi^\L||\Lambda^\MK||\Pi^\L|)^r(0,0) \notag\\
& \leq (MK)^r \sum_{k_1, \ldots, k_{2r-1}\in \Z} \big|\Pi^\L(0, k_1)\big|\big|\Pi^\L(k_1, k_2)\big|\cdots \big|\Pi^\L (k_{2r-1},0)\big| \label{prop 3.1 eqn 1}
\end{align}
where we used that $| \varrho^{\MK}_k| \leq MK$. Note that each path having a non-zero contribution to the sum above is of the form $0=k_0\to k_1\to k_2 \to \cdots \to k_{2r-1} \to k_{2r}$ = 0 such that $|k_l-k_{l+1}| \leq L$ for all $l \in[2r]$. The number of such paths is at most $(2L+1)^{2r-1}$ and each of these paths contributes at most $2^{-(2r-1)}$ by virtue of the entries of $\Pi^\L$ being absolutely bounded above by $1/2$ (Lemma \ref{lem 2.2}(a)). Therefore, we deduce that for $r \ge 1$,
\[ |m(r)| \le (MK)^r 2^{-2r}\big(2L + 1\big)^{2r-1}. \]
Hence, by \cite[Theorem 30.1]{Bil1},  we have that $\E^\omega\nu_{\tD}$ is uniquely determined by  $(m(r))_{r\in \N}$.

By Skorokhod's representation, we may consider the random variables to be defined on a common probability space so that 
\begin{equation}\label{eq:pt_conv_assumption}
\text{the convergence in the first statement of Lemma \ref{lem 3.4} happens almost surely.}
\end{equation}
Note that the truncated matrix $\tH$ and the truncated operator $\tD$ are functions of  the random variables $(b_{(j)},\sigma_N(j))_{j\in[K]}$ and $(\Gamma_j,\zeta_j,U_j)_{j \in [K]}$ respectively. This allows us to define, on the new common probability space, 
random probability measures  $(\mu_N^{\MLK})_{N\geq 1}$ and $\vartheta^\MLK$ such that $\mu_N^{\MLK} \ed \mu_{\tH}$ and $\vartheta^{\MLK} \ed \E^\omega \nu_{\tD}$. Therefore, without loss, we will assume that $\mu_{\tH}$ and $\E^\omega \nu_{\tD}$ are defined on the same probability space and \eqref{eq:pt_conv_assumption} holds.

Since we have shown that each realization of the measure $\E^\omega \nu_{\tD}$ is determined by its moments, it follows that (see, for example, see \cite[Theorem 30.2]{Bil1}) almost surely the moments of $\mu_N^{\MLK}$ converge to those of $\vartheta^\MLK$ as $N \to \infty$, which implies that $\mu_{\tH} \Rightarrow \E^\omega \nu_{\tD}$ almost surely, as desired.
 
Let $m_N(r)$ be the $r$th moment of $\mu_N^{\MLK}$. Then 
\begin{align} \label{prop 3.1 eqn 2}
m_N(r) & = \frac{1}{2N}\mathrm{tr}\lp H^\MLK\rp^r =\frac{1}{2N}\sum_{\tp\in \mathcal{P}} P^\L(k_0,k_1)P^\L(k_1,k_2)\cdots P^\L(k_{2r-1},k_0) \cdot d_{k_1}^\MK  d_{k_3}^\MK\cdots  d_{k_{2r-1}}^\MK
\end{align}
where $\mathcal{P}$ is the set of paths 
$\tp = (k_0, k_1, \ldots, k_{2r-1})\in [2N]^{2r}$ such that
\begin{align*}
 |k_j - k_{j+1}| \leq L \mathrm{\;or\;} |k_j - k_{j+1}| \geq 2N-L \mathrm{\;for\;all\;} j \in [2r],
\end{align*}
where we adopt the convention that $k_{2r}=k_{0}$. Let 
\begin{align*}
\mathcal{P}' & = \{\tp\in \mathcal{P} : |k_j - k_{j+1}| \leq L \mathrm{\;for\;all\;} j \in [2r]\}.
\end{align*}
We claim that $\#(\mathcal{P}\setminus\mathcal{P}')$ is bounded above by a constant that depends only on $L$ and $r$. 

Observe that if $k_j \in [L, 2N- L]$, then 
no choice of $k_{j+1} \in [2N]$ satisfies 
$|k_j - k_{j+1}| \geq 2N-L$. Hence, as long as  the initial index $k_0 \in [2rL, 2N -2rL]$, any path in $\mathcal{P}$ can not satisfy the constraint $|k_j - k_{j+1}| \geq 2N-L $ for any $j$. Therefore, 
\[ \mathcal{P}\setminus\mathcal{P}' \subseteq \{\tp\in \mathcal{P} : 
k_0 \le 2rL \text{ or } k_0 \ge 2N - 2rL \}.\]
For each such bad choice of the initial index $k_0$, we have at most $(2L+1)^{2r-1}$ many paths in $\mathcal{P}$. Consequently,
\[ \#(\mathcal{P}\setminus\mathcal{P}') \le (4rL)(2L+1)^{2r-1}. \]

By the above estimate and the facts that $|d_k^\MK|\leq MK$ and $|P(k, l)| \le 1$, 
we obtain from \eqref{prop 3.1 eqn 2} that 
\begin{align*}
m_N(r) & =\frac{1}{2N}\sum_{\tp \in \mathcal{P}'} P^\L(k_0,k_1)P^\L(k_1,k_2)\cdots P^\L(k_{2r-1},k_0) \cdot d_{k_1}^\MK  d_{k_3}^\MK\cdots  d_{k_{2r-1}}^\MK + O\lp N^{-1}\rp.
\end{align*}
Lemma \ref{lem 2.2} and the boundedness of $d^\MK_k$ imply that \begin{align*}
m_N(r)  = \frac{1}{2N}\sum_{\tp \in \mathcal{P}'} \Pi^\L(k,k_1)\Pi^\L(k_1,k_2)\cdots \Pi^\L(k_{2r-1},k) \cdot d_{k_1}^\MK  d_{k_3}^\MK\cdots  d_{k_{2r-1}}^\MK + O\lp N^{-1} \rp.
\end{align*}
Let $\Theta_N$ be uniform on $[2N]$ as in \eqref{eq:pt_conv_assumption}, independent of all other randomness. Since $\Pi^\L(j,k) = 0$ for $|j-k| >L$, we can continue the computation to write \begin{align*}
& m_N(r)  = \frac{1}{2N}\sum_{k, k_1, \ldots, k_{2r-1} \in [2N]} \Pi^\L(k,k_1)\Pi^\L(k_1,k_2)\cdots \Pi^\L(k_{2r-1},k) \cdot d_{k_1}^\MK  d_{k_3}^\MK\cdots  d_{k_{2r-1}}^\MK + O\lp N^{-1}\rp\\
	& = \E_{\Theta_N} \sum_{k, k_1, \ldots, k_{2r-1} \in [2N]} \Pi^\L(\Theta_N,k_1)\Pi^\L(k_1,k_2)\cdots \Pi^\L(k_{2r-1},\Theta_N) \cdot d_{k_1}^\MK  d_{k_3}^\MK\cdots  d_{k_{2r-1}}^\MK + O\lp N^{-1}\rp,
	\end{align*}
where by $\E_{\Theta_N}$, we mean expectation over the randomness of $\Theta_N$ only. With the understanding that $ d_k^\MK = 0$ for $k <0$ or $k \geq 2N$, we utilize the Toeplitz structure of $\Pi^\L$ and change variables to obtain \begin{align}
	& m_N(r) = \E_{\Theta_N} \sum_{k_1, \ldots, k_{2r-1} \in \Z} \Pi^\L(0,k_1-\Theta_N)\Pi^\L(k_1-\Theta_N,k_2-\Theta_N)\cdots \Pi^\L(k_{2r-1}-\Theta_N,0) \notag\\
	& \hspace{300 pt }\cdot\;  d_{k_1}^\MK  d_{k_3}^\MK\cdots  d_{k_{2r-1}}^\MK+ O\lp N^{-1}\rp \notag\\
	& = \sum_{k_1, \ldots, k_{2r-1} \in \Z} \Pi^\L(0,k_1)\Pi^\L(k_1,k_2)\cdots \Pi^\L(k_{2r-1},0)  \cdot \E_{\Theta_N}  d_{k_1+\Theta_N}^\MK  d_{k_3+\Theta_N}^\MK\cdots  d_{k_{2r-1}+\Theta_N}^\MK + O\lp N^{-1} \rp. \label{prop 3.1 eqn 3}
\end{align}
We point out that the indices in the above sum can be restricted to  
$|k_j| \le 2Lr$ for each $j$, violation of which gives zero contribution to the sum.

The key step is now to show that as $N\to \infty$, \begin{align} \label{prop 3.1 eqn 4}
\E_{\Theta_N} \prod_{l=1}^r d_{k_{2l-1}+\Theta_N}^\MK \to \E^\omega \prod_{l=1}^r \varrho^\MK_{k_{2l-1}}.
\end{align}
Since 
\begin{align*}
 d_{k_l + \Theta_N}^\MK = 2\sum_{j=0}^{K-1}  b_{(j)}^\M \cos \Bigl(\frac{2\pi k\sigma_N(j)}{2N} + \frac{2\pi \Theta_N \sigma_N(j)}{2N}\Bigr).
\end{align*}
for each $k_1, k_2, \ldots, k_{2r-1} \in [-2Lr, 2Lr] $, there exists a continuous bounded function $f$ that is depends on $k_1, k_2, \ldots, k_{2r-1}$ but is independent of $N$ such that we can write \begin{align*} 
\prod_{l=1}^r d_{k_{2l-1}+\Theta_N}^\MK = f\left(\Bigl( b_{(j)}^\M, \frac{\sigma_N(j)}{2N}, \bb{\frac{\Theta_N \sigma_N(j)}{2N}}\Bigr)_{j \in[K]}\right).
\end{align*}
Under the assumption \eqref{eq:pt_conv_assumption} and by continuity of $f$, we have 
\begin{align*}
    f\left(\Bigl( b_{(j)}^\M, \frac{\sigma_N(j)}{2N}, \bb{\frac{\Theta_N \sigma_N(j)}{2N}}\Bigr)_{j \in[K]}\right) \to f\left(\Bigl(\varepsilon_j(\Gamma_j^\M)^{-1/\alpha}, \zeta_j, U_j\Bigr)_{j\in[K]}\right)
\end{align*}
Since $f$ is uniformly bounded, by the conditional dominated convergence theorem, we obtain that \begin{align}\label{prop 3.1 eqn 5}
\E_{\Theta_N} \prod_{l=1}^r d_{k_{2l-1}+\Theta_N}^\MK \to \E^\omega \prod_{l=1}^r  \breve{\varrho}^\MK_{k_{2l-1}}
\end{align}
where  \begin{align*}
\breve{\varrho}^\MK_k:= \sum_{j=0}^{K-1} \varepsilon_j(\Gamma_j^\M)^{-1/\alpha}\cos (2\pi (U_j + k\zeta_j))
\end{align*}
are the entries of the diagonal operator $\breve{\Lambda}^\MK = \mathrm{diag}((\breve{\varrho}_k^\MK)_{k \in \Z})$. For a fixed realization of $\omega $ and $(\varepsilon_j)_{j\geq 0}$ and for every $ k\in \Z$, we have \begin{align*}
\varepsilon_j(\Gamma_j^\M)^{-1/\alpha}\cos(U_j + k\zeta_j) \ed (\Gamma_j^\M)^{-1/\alpha}\cos(U_j + k\zeta_j).
\end{align*}
Hence, \begin{align*}
\E^\omega \prod_{l=1}^r \breve{\varrho}^\MK_{k_{2l-1}} = \E^\omega \prod_{l=1}^r \varrho^\MK_{k_{2l-1}},
\end{align*}
which, together with \eqref{prop 3.1 eqn 5}, yields \eqref{prop 3.1 eqn 4}. 
Therefore, we conclude that
\begin{align*}
m_N(r) & \to  \sum_{k_1, \ldots, k_{2r-1} \in \Z} \Pi^\L(0,k_1)\Pi^\L(k_1,k_2)\cdots \Pi^\L(k_{2r-1},0) \cdot \E^\omega \prod_{l=1}^r \varrho^\MK_{k_{2l-1}} = m(r),
\end{align*}
establishing Proposition~\ref{prop 3.1}.

\section{Approximation of the empirical spectral distribution by matrix truncations} \label{sec 4}

Our goal for this section is to prove Proposition \ref{prop 4.2}, i.e., to show that $$\lim_{L\to \infty} \limsup_N \E \dl(\mu_H, \mu_{\tH}) = 0$$ under the assumption that the truncation levels $M$, $L$ and $K$ are related as described in \eqref{eq:trunc_assump}. We shall also prove that the empirical spectral distributions of the matrices $D$ and $D^\circ$ are close.

\begin{lemma} \label{lem 4.2}
As $N\to\infty$, the following convergence holds in probability.
\begin{align*}
\dl(\mu_{T},2\mu_H-\delta_0) \to 0.
\end{align*}
\end{lemma}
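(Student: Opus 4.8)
The plan is to relate $\mu_T$ and $\mu_H$ through the circulant matrix $G = G_{2N}$ and its Fourier conjugate, and to show that the only discrepancy comes from the rank-one change $D^\circ = D - b_0 I$ together with the replacement of the exact projection $P$ by the operator appearing in $H = PDP$. Recall from \eqref{eq:esd_toep_circ} that $\tfrac12(\mu_T + \delta_0) = \mu_{PD^\circ P}$ exactly, with no error term. Hence it suffices to show that $\dl(\mu_{PD^\circ P}, \mu_{\frac12(2H-\delta_0)}) \to 0$ in probability, i.e.\ that $\dl(\mu_{PD^\circ P}, \mu_{PDP}) \to 0$ in probability, since $\tfrac12(\mu_T + \delta_0) \to$ the same limit as $\mu_H$ forces $\mu_T \to 2\mu_H - \delta_0$ at the level of the L\'evy metric (the map $\nu \mapsto 2\nu - \delta_0$ need not preserve $\dl$ exactly, but one checks $\dl(2\nu_1-\delta_0, 2\nu_2-\delta_0) \le C\,\dl(\nu_1,\nu_2)$ for a universal constant, which is the elementary estimate I would record first).

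The main step is the comparison of $\mu_{PD^\circ P}$ with $\mu_{PDP}$. Here $D - D^\circ = b_0 I$, so $PDP - PD^\circ P = b_0 P^2 = b_0 P$ since $P$ is a projection; but $P$ has rank $N$, so this is not a low-rank perturbation and rank-inequality bounds on $\dl$ do not apply directly. Instead I would bound the operator norm: $\|PDP - PD^\circ P\|_{\op} = |b_0|\,\|P\|_{\op} = |b_0| = c_N^{-1}|a_0|$, which converges to $0$ in probability because $c_N \to \infty$ while $a_0$ is a fixed random variable. Since $\dl(\mu_A,\mu_B) \le \|A-B\|_{\op}$ for symmetric matrices of the same size (a standard consequence of Weyl's inequality — each eigenvalue moves by at most $\|A-B\|_\op$, hence the CDFs shift by at most that amount), we get $\dl(\mu_{PD^\circ P}, \mu_{PDP}) \le c_N^{-1}|a_0| \to 0$ in probability. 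One subtlety: $D$ and $D^\circ$ are defined with possibly different index ranges in the cosine sums ($d_k$ sums $j$ from $0$ to $N-1$ while $d_k^\circ = b_0 + 2\sum_{j=1}^{N-1} b_j\cos(\cdot)$), so I would first verify that indeed $D - D^\circ = b_0 I_{2N}$ exactly from the displayed formulas — the $j=0$ term in $d_k$ contributes $2b_0$, the constant in $d_k^\circ$ is $b_0$, and $\cos 0 = 1$, giving $d_k - d_k^\circ = 2b_0 - b_0 = b_0$ for every $k$, so the claim holds.

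Assembling: from $\tfrac12(\mu_T+\delta_0) = \mu_{PD^\circ P}$ and $\dl(\mu_{PD^\circ P},\mu_{PDP}) = \dl(\mu_{PD^\circ P},\mu_H) \to 0$ in probability, the triangle inequality gives $\dl\bigl(\tfrac12(\mu_T+\delta_0),\mu_H\bigr)\to 0$ in probability, equivalently $\dl(\mu_T, 2\mu_H - \delta_0)\to 0$ in probability after applying the bi-Lipschitz bound for the affine map $\nu\mapsto 2\nu-\delta_0$ (and its inverse $\nu\mapsto\tfrac12(\nu+\delta_0)$) on $\mathcal M$. The only genuinely non-routine point is the operator-norm control of the $b_0 P$ perturbation — but since $\|P\|_\op = 1$ exactly and $b_0 = c_N^{-1}a_0 \to 0$ a.s.\ (even almost surely, not merely in probability, since $c_N\to\infty$ deterministically and $a_0$ is finite a.s.), this is immediate, so in fact the lemma holds with convergence a.s., of which convergence in probability is a weaker consequence. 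I would state it as in probability to match the usage in the rest of the paper.
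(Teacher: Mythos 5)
Your approach is correct and takes a genuinely different route from the paper. Both start from $\tfrac12(\mu_T + \delta_0) = \mu_{PD^\circ P}$ and the exact relation $D - D^\circ = b_0 I$, and both reduce the lemma to controlling $\dl(\mu_{PD^\circ P}, \mu_{PDP})$. The paper then uses the Hoffman--Wielandt (Frobenius-norm) inequality, Lemma~\ref{lem 4.1}, to get $\dl^4(\mu_{PD^\circ P}, \mu_{PDP}) \le \tfrac{1}{2N}\|D^\circ - D\|_\f^2 = b_0^2$, followed by a tail bound $\P(|b_0| > \varepsilon^2) \to 0$ via the slowly varying function estimate of Lemma~\ref{fact 1}. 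You instead use the operator-norm identity $\|P(D^\circ - D)P\|_\op = |b_0|\,\|P\|_\op = |b_0|$ and Weyl's inequality, which directly yields $\dl(\mu_{PD^\circ P}, \mu_{PDP}) \le |b_0| = c_N^{-1}|a_0| \to 0$ almost surely. Your route is shorter, gives the sharper almost-sure conclusion (the paper's Frobenius bound only gives $\dl \le |b_0|^{1/2}$ and a Markov-type argument is used), and it bypasses both Lemma~\ref{lem 4.1} and the slowly-varying-function machinery entirely. Both your write-up and the paper's leave the transfer step --- passing from $\dl(\mu_{PD^\circ P}, \mu_{PDP})$ to $\dl(\mu_T, 2\mu_H - \delta_0)$ --- unjustified: the paper asserts a factor of~$2$ without proof, and you gesture at a ``bi-Lipschitz'' bound for $\nu \mapsto 2\nu - \delta_0$ that you plan to ``record first.'' That factor-$2$ bound is in fact correct on the subset of $\mathcal{M}$ consisting of measures with mass at least $1/2$ at $0$, but it does require a short argument (the naive calculation runs into trouble for $t$ in a window around $0$, and one needs to use the mass-$\ge 1/2$-at-$0$ hypothesis in both directions of the L\'evy condition). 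For the operator-norm route you could sidestep this entirely: since $\ker P$ lies in the kernel of both $PD^\circ P$ and $PDP$, restricting both $2N \times 2N$ matrices to the $N$-dimensional range of $P$ yields $N \times N$ Hermitian matrices whose ESDs are exactly $\mu_T$ and $2\mu_H - \delta_0$ and whose operator-norm difference is still at most $|b_0|$; Weyl on the restrictions then gives $\dl(\mu_T, 2\mu_H - \delta_0) \le |b_0|$ with no factor-$2$ detour at all.
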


We shall need the following version of the Hoffman-Weilandt inequality.
\begin{lemma}[{\cite[Corollary A.41]{Bai1}}] \label{lem 4.1}
Let $A$ and $B$ be two symmetric matrices of size $N$ and let $\|\cdot\|_\f$ denote the Fr\"obenius norm. Then,
\begin{align*} 
\dl^4(\mu_A,\mu_B) & \leq \dl^3(\mu_A,\mu_B) \leq \frac{1}{N} \|A-B\|_\f^2.
\end{align*}
\end{lemma}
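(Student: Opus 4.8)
The plan is to decouple the statement into two classical facts. The first is the Hoffman--Weilandt spectral inequality: for symmetric $A,B$ of size $N$ with ordered eigenvalues $\lambda_0(A)\le\cdots\le\lambda_{N-1}(A)$ and $\lambda_0(B)\le\cdots\le\lambda_{N-1}(B)$,
\[
\sum_{i=0}^{N-1}\big(\lambda_i(A)-\lambda_i(B)\big)^2 \le \|A-B\|_\f^2 .
\]
The second is a purely deterministic comparison between the L\'evy distance of two uniform $N$-atom measures and the $\ell^2$ distance of their ordered atoms, namely $\dl^3(\mu_A,\mu_B)\le \tfrac1N\sum_{i}(\lambda_i(A)-\lambda_i(B))^2$. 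Granting these two bounds gives the middle inequality of the lemma, and the outer inequality $\dl^4(\mu_A,\mu_B)\le\dl^3(\mu_A,\mu_B)$ is immediate because $\dl(\mu_A,\mu_B)\le 1$ always (take $\varepsilon=1$ in the definition of $\dl$).

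For the comparison inequality I would argue by contraposition in $\varepsilon$. Fix $\varepsilon>0$ with $\dl(\mu_A,\mu_B)>\varepsilon$ and let $F_A,F_B$ denote the CDFs of $\mu_A,\mu_B$. Then there is a point $t$ with, say, $F_B(t)>F_A(t+\varepsilon)+\varepsilon$ (the reverse case is symmetric). Setting $m=\#\{i:\lambda_i(B)\le t\}=NF_B(t)$ and $m'=\#\{i:\lambda_i(A)\le t+\varepsilon\}=NF_A(t+\varepsilon)$ we get $m-m'>\varepsilon N$, and because both eigenvalue lists are sorted, every index $i\in\{m',\ldots,m-1\}$ satisfies $\lambda_i(B)\le t$ and $\lambda_i(A)>t+\varepsilon$, hence $|\lambda_i(A)-\lambda_i(B)|>\varepsilon$. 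Summing over these more than $\varepsilon N$ indices gives $\sum_i(\lambda_i(A)-\lambda_i(B))^2>\varepsilon^3 N$. Letting $\varepsilon\uparrow\dl(\mu_A,\mu_B)$ yields $\dl^3(\mu_A,\mu_B)\le\tfrac1N\sum_i(\lambda_i(A)-\lambda_i(B))^2$.

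For Hoffman--Weilandt I would recall the standard argument: diagonalize $A=U\diag(\lambda_i(A))U^*$ and $B=V\diag(\lambda_i(B))V^*$, so that with $W=U^*V$ unitary,
\[
\|A-B\|_\f^2 = \big\|\diag(\lambda_i(A))\,W - W\,\diag(\lambda_i(B))\big\|_\f^2 = \sum_{i,j}|W_{ij}|^2\big(\lambda_i(A)-\lambda_j(B)\big)^2 .
\]
The matrix $(|W_{ij}|^2)_{i,j}$ is doubly stochastic, so by Birkhoff's theorem the right-hand side is bounded below by $\min_S\sum_{i,j}S_{ij}(\lambda_i(A)-\lambda_j(B))^2$ over permutation matrices $S$, and by the rearrangement inequality this minimum is attained at the identity permutation once both eigenvalue lists are sorted. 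This gives $\|A-B\|_\f^2\ge\sum_i(\lambda_i(A)-\lambda_i(B))^2$, and combining with the previous paragraph completes the proof.

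I do not expect a real obstacle: the statement is quoted verbatim from \cite[Corollary A.41]{Bai1}, so one option is simply to cite it. If a self-contained proof is wanted, the only points needing mild care are the bookkeeping with strict versus non-strict inequalities in passing from $\dl(\mu_A,\mu_B)>\varepsilon$ to the count $m-m'>\varepsilon N$ and then taking $\varepsilon\uparrow\dl(\mu_A,\mu_B)$, and the choice of whether to cite or re-derive Birkhoff's theorem and the rearrangement inequality inside the Hoffman--Weilandt step.
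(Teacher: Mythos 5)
Your proof is correct. The paper itself does not prove this lemma but simply cites it from Bai and Silverstein, and your argument reconstructs essentially the proof given there: the Hoffman--Wielandt inequality $\sum_i(\lambda_i(A)-\lambda_i(B))^2\le\|A-B\|_\f^2$ (via Birkhoff and rearrangement) combined with the elementary counting bound $\dl^3(\mu_A,\mu_B)\le\frac1N\sum_i(\lambda_i(A)-\lambda_i(B))^2$ for uniform atomic measures with sorted atoms, together with the trivial $\dl\le1$ for the outer inequality. The one step worth double-checking for yourself is the index count: from $F_B(t)>F_A(t+\varepsilon)+\varepsilon$ you get $m>m'+\varepsilon N$ with $m,m'$ integers, so $m-m'>\varepsilon N$, and each of the $m-m'$ indices in $\{m',\dots,m-1\}$ indeed contributes more than $\varepsilon^2$ to the sum; this is exactly what you wrote and it is sound.
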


We shall frequently be referring to the following result about slowly varying functions.

\begin{lemma}[{\cite[Proposition 2.3]{Res}}] \label{fact 1}
For any $t>0$,
\begin{align*}
\lim_{N \to \infty} \frac{N \ell (tc_N)}{c_N^\alpha} = 1.
\end{align*}
\end{lemma}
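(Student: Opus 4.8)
\textbf{Proof plan for Lemma \ref{fact 1} (restated: $\lim_{N\to\infty} N\ell(tc_N)/c_N^\alpha = 1$ for every $t>0$).}

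The plan is to reduce everything to the defining property of $c_N$ together with the slow variation of $\ell$. First I would unpack the definition $c_N = \inf\{t : \sP(|a_0|\ge t)\le N^{-1}\}$. Since $g(s) = \sP(|a_0|\ge s)$ is non-increasing and right-continuous in $s$ (it is a tail function), the infimum is attained in the sense that $g(c_N) \le N^{-1}$ and $g(c_N^-) \ge N^{-1}$; more useful is the two-sided squeeze $g(c_N)\le N^{-1}\le g(c_N - \varepsilon)$ for every $\varepsilon>0$, hence by right-continuity from the right and the monotonicity, $N g(c_N) \le 1$ and $\limsup$/$\liminf$ arguments will give $N g(c_N) \to 1$. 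More precisely, for any $\beta\in(0,1)$, eventually $c_{N}$ is large and $g(\beta c_N) > N^{-1} \ge g(c_N)$ cannot both be tight unless $Ng(c_N)\to 1$: one shows $\liminf_N N g(c_N)\ge 1$ by noting $g$ jumps are negligible at infinity for a regularly varying tail (or simply that $g(c_N - 0)\ge N^{-1}$ combined with $\ell$ slowly varying forces $g(c_N)/g(c_N-0)\to 1$), while $N g(c_N)\le 1$ is immediate. So the first step establishes
\begin{align}\label{eq:Ng}
\lim_{N\to\infty} N\, g(c_N) = 1.
\end{align}

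The second step is to rewrite $g(c_N)$ using \eqref{eqn 1.1}: $g(c_N) = \ell(c_N)/c_N^\alpha$, so \eqref{eq:Ng} reads $N\ell(c_N)/c_N^\alpha \to 1$. This is exactly the claimed statement in the special case $t=1$. The third step extends to general $t>0$ by slow variation: write
\begin{align*}
\frac{N\ell(tc_N)}{c_N^\alpha} = \frac{N\ell(c_N)}{c_N^\alpha}\cdot\frac{\ell(tc_N)}{\ell(c_N)},
\end{align*}
and since $c_N\to\infty$ (because $g(c_N)\to 0$ and $g$ is a genuine tail function vanishing at infinity), the slow variation of $\ell$ gives $\ell(tc_N)/\ell(c_N)\to 1$ for the fixed $\beta = t$. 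Combining with the $t=1$ case finishes the proof. I would also record the companion fact $c_N = \ell_0(N)N^{1/\alpha}$ for a slowly varying $\ell_0$, which follows by inverting the regularly varying function $s\mapsto g(s)^{-1} = s^\alpha/\ell(s)$ using the standard theory of regularly varying functions (e.g. de Bruijn conjugates), though this is not strictly needed for the displayed limit.

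The only genuinely delicate point is \eqref{eq:Ng}, specifically the lower bound $\liminf_N N g(c_N)\ge 1$: a priori $g$ could have a downward jump exactly at $c_N$ of non-negligible relative size, which would make $g(c_N)$ strictly smaller than $N^{-1}$ by a constant factor. The resolution is that a regularly varying tail of index $-\alpha<0$ satisfies $g(s)/g(s-) \to 1$ as $s\to\infty$ — equivalently $g(s(1-\varepsilon))/g(s)\to (1-\varepsilon)^{-\alpha}\to 1$ as $\varepsilon\to 0$ uniformly for large $s$ — so the jump at $c_N$ is asymptotically negligible, giving $g(c_N) = (1+o(1)) g(c_N-) \ge (1+o(1)) N^{-1}$. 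Everything else is routine bookkeeping with monotone functions and the uniform convergence theorem for slowly varying functions; this is why the authors simply cite \cite[Proposition 2.3]{Res} rather than reproducing the argument.
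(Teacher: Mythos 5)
Your proposal is correct and is precisely the standard argument behind the cited result; the paper itself offers no proof of Lemma \ref{fact 1}, deferring entirely to \cite[Proposition 2.3]{Res}, so there is nothing in the paper to diverge from. One small point: with $g(t)=\sP(|a_0|\ge t)$ the tail is left-continuous rather than right-continuous, so the immediate inequality is $N g(c_N)\ge 1$ (from $g(t)>N^{-1}$ for $t<c_N$) and the delicate direction is $\limsup_N N g(c_N)\le 1$, i.e.\ the sides are swapped relative to what you wrote; but your resolution --- that the relative jump of a regularly varying tail is asymptotically negligible, via $g(\lambda c_N)/g(c_N)\to\lambda^{-\alpha}\to 1$ as $\lambda\to 1$ --- disposes of that direction in exactly the same way, and the reduction to $t=1$ plus slow variation is routine as you say.
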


\begin{proof}[Proof of Lemma \ref{lem 4.2}]
From identity \eqref{eq:esd_toep_circ}, it suffices to show that \begin{align*}
\dl(\mu_{T},2\mu_H-\delta_0) \le 2 \dl(\mu_{PD^\circ P},\mu_{P DP}) \to 0
\end{align*}
in probability as $N\to\infty$. Using Lemma \ref{lem 4.1} and the fact that $\|P\|_\op = 1$, we obtain \begin{align*}
\dl^4(\mu_{PD^\circ P},\mu_{P DP}) & \leq \frac{1}{2N} \|P(D^\circ -D)P\|_\f^2 \leq \frac{1}{2N}\|D^\circ - D\|_\f^2 = \frac{1}{2N} \sum_{k=0}^{2N-1} (d_k^\circ - d_k)^2 = b_0^2.
\end{align*}
Fix $\varepsilon >0$. The above computation leads to \begin{align*}
\P (\dl(\mu_{PD^\circ P}, \mu_{PDP}) > \varepsilon) \leq \P (|b_0|> \varepsilon^2) = \frac{\ell(c_N\varepsilon^2)}{(c_N\varepsilon^2)^\alpha} = \frac{N\ell(c_N\varepsilon^2)}{c_N^\alpha}\frac{1}{N\varepsilon^{2\alpha}} \to 0
\end{align*}
where the last step follows from Lemma \ref{fact 1}.
\end{proof}

\subsection{Proof of Proposition \ref{prop 4.2}}

From the definition of $P$, one readily obtains the rate of decay of the entries: \begin{align} \label{eqn 2.7}
|P(k,l)| = \Big|2N\sin\Bigl(\frac{\pi(k-l)}{2N}\Bigr)\Big|^{-1} \leq \frac{C}{\min(|k-l|,2N-|k-l|)}; \qquad k \neq l \in [2N]
\end{align}
for some absolute constant $C$. This can be used to obtain an upper bound on the operator norm of $P^\L$.

\begin{lemma}\label{lem 2.3}
$\|P^\L\|_\op \leq C \log L$ for some constant $C>0$ independent of $N$ and $L$.
\end{lemma}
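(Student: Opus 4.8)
The goal is to bound $\|P^\L\|_\op$ by $C \log L$ uniformly in $N$. The plan is to exploit the circulant (translation-invariant) structure of $P^\L$: since $P^\L$ is a circulant matrix on $[2N]$ whose symbol is a trigonometric polynomial, its operator norm equals the sup-norm of that symbol over the $2N$-th roots of unity, which is at most $\sup_{x}|\widehat{P^\L}(x)|$ where $\widehat{P^\L}(x) = \sum_{|j| \le L} P^\L(0,j) e^{2\pi i j x / (2N)}$ (interpreting the band cyclically). Equivalently, and more robustly, I would simply bound the operator norm of a banded circulant matrix by the maximum absolute row sum is too lossy (that gives a harmonic sum, hence $\log L$, but one must be careful that the off-diagonal entries are purely imaginary and the resulting bound is not automatically an operator norm bound for non-symmetric-looking matrices). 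Since $P^\L$ is Hermitian, however, the $\ell^\infty \to \ell^\infty$ bound (max absolute row sum) does give a genuine operator norm bound via $\|A\|_\op \le \sqrt{\|A\|_{1\to1}\|A\|_{\infty\to\infty}} = \|A\|_{\infty\to\infty}$ for Hermitian $A$. So the cleanest route is:

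\textbf{Step 1.} Note $P^\L$ is Hermitian, so $\|P^\L\|_\op \le \max_{k \in [2N]} \sum_{l \in [2N]} |P^\L(k,l)|$.

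\textbf{Step 2.} Fix a row $k$. The diagonal term contributes $1/2$. For the off-diagonal terms, $P^\L(k,l) \ne 0$ only when $|k-l| \le L$ or $|k-l| \ge 2N - L$ (and $|k-l|$ odd), and by \eqref{eqn 2.7}, $|P(k,l)| \le C/\min(|k-l|, 2N-|k-l|)$. The nonzero off-diagonal entries in row $k$ therefore have $\min(|k-l|,2N-|k-l|)$ ranging over odd values in $\{1, 3, \ldots\}$ up to $L$, each value occurring at most twice (once on each "side"). Hence
\begin{align*}
\sum_{l \ne k} |P^\L(k,l)| \le 2C \sum_{\substack{1 \le m \le L \\ m \text{ odd}}} \frac{1}{m} \le 2C \sum_{m=1}^{L} \frac{1}{m} \le 2C(\log L + 1).
\end{align*}

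\textbf{Step 3.} Combining, $\|P^\L\|_\op \le 1/2 + 2C(\log L + 1) \le C' \log L$ for $L$ large enough (absorbing the constant and the small-$L$ cases into $C'$), with $C'$ independent of $N$.

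The main thing to be careful about — the only real obstacle — is justifying Step 1: the row-sum bound controls $\|\cdot\|_{\infty \to \infty}$, and one needs $P^\L$ to be Hermitian (which it is, being a real-diagonal plus purely-imaginary-antisymmetric banded matrix, indeed a genuine Hermitian matrix since $\Pi(l,k) = \overline{\Pi(k,l)}$ and the same holds for $P$) to upgrade this to an operator-norm bound via interpolation between $\ell^1$ and $\ell^\infty$, or equivalently via $\|A\|_\op^2 = \|A^*A\|_\op = \|A^2\|_\op \le \|A^2\|_{\infty\to\infty} \le \|A\|_{\infty\to\infty}^2$. Alternatively one can bypass interpolation entirely by diagonalizing the circulant $P^\L$ by the Fourier matrix $F_{2N}$ and bounding the eigenvalues $\sum_{|j|\le L \text{ (cyclically)}} P^\L(0,j)\, e^{2\pi i j m/(2N)}$ directly by the same harmonic-series estimate; this is perhaps the most transparent argument and avoids any subtlety. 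Either way the estimate is routine once the banded-harmonic-sum bound is in hand.
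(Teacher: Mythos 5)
Your proof is correct and follows essentially the same route as the paper: bound the maximum absolute row sum by a harmonic series of length $L$ using \eqref{eqn 2.7}, then upgrade to an operator-norm bound via $\|A\|_\op \le \sqrt{\|A\|_1\|A\|_\infty}$ and Hermiticity. The only cosmetic difference is your aside about the Fourier-diagonalization alternative, which the paper does not use.
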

\begin{proof}
By \eqref{eqn 2.7}, we have that for any $k \in [2N]$,
\begin{align*}
\sum_{l=0}^{2N-1}|P^\L(k,l)| = \frac{1}{2} + 2 \sum_{l=1}^L |P^\L(k,k+l)| \leq C\sum_{l=1}^L\frac{1}{l} \leq C\log L.
\end{align*}
Since $P^\L$ is Hermitian, by H\"older's inequality we obtain that\begin{align*}
\|P^\L\|_{\op} \leq \sqrt{\|P^\L\|_1\|P^\L\|_\infty} = \|P^\L\|_\infty = \max_k \sum_{l=0}^{2N-1}|P^\L(k,l)| \leq C \log L.
\end{align*}
\end{proof}

We decompose the proof of Proposition \ref{prop 4.2} into three lemmas. In Lemmas \ref{lem 4.4}, \ref{lem 4.5} and \ref{lem 4.7}, we respectively show that the empirical spectral distribution of $H$ is close to that of $\fH$, which is close to that of $\sH$, which in turn is close to that of $\tH$. The proof of Proposition \ref{prop 4.2} then follows from these three lemmas and the triangle inequality.

\begin{lemma} \label{lem 4.4}
\begin{align*}
\lim_{L\to \infty} \limsup_N \E \dl(\mu_H, \mu_{\fH}) = 0.
\end{align*}
\end{lemma}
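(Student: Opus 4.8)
The plan is to bound the Lévy distance $\dl(\mu_H, \mu_{\fH})$ via the Hoffman–Wielandt inequality (Lemma~\ref{lem 4.1}), which reduces the problem to controlling $\frac{1}{2N}\|H - \fH\|_\f^2 = \frac{1}{2N}\|PDP - PD^\M P\|_\f^2$. Since $\|P\|_\op = 1$, this is at most $\frac{1}{2N}\|D - D^\M\|_\f^2 = \frac{1}{2N}\sum_{k=0}^{2N-1}(d_k - d_k^\M)^2$. The difference $d_k - d_k^\M = 2\sum_{j=1}^{N-1}(b_j - b_j^\M)\cos(2\pi jk/2N)$ involves only those $b_j$ with $|b_j| > M$; more precisely $b_j - b_j^\M = \sgn(b_j)(|b_j| - M)\1_{|b_j| > M}$.

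First I would expand the Frobenius norm and use the identity $\frac{1}{2N}\sum_{k=0}^{2N-1}\cos(2\pi jk/2N)\cos(2\pi j'k/2N) = \frac{1}{2}(\1_{j=j'} + \1_{j = 2N - j'})$ (a standard orthogonality relation for the discrete cosines), so that after averaging over $k$ the cross terms essentially collapse and we get $\frac{1}{2N}\|D - D^\M\|_\f^2 \le C\sum_{j=1}^{N-1}(|b_j| - M)^2\1_{|b_j| > M} \le C\sum_{j=0}^{N-1} |b_j|^2 \1_{|b_j| > M}$. Recalling $b_j = c_N^{-1} a_j$, this is $C c_N^{-2}\sum_{j=0}^{N-1} a_j^2 \1_{|a_j| > M c_N}$. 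Taking expectations, $\E \dl^4(\mu_H, \mu_{\fH}) \le C c_N^{-2} N\, \E[a_0^2 \1_{|a_0| > M c_N}]$, and I would estimate $\E[a_0^2 \1_{|a_0| > Mc_N}]$ using the tail bound \eqref{eqn 1.1}: integration by parts together with Karamata's theorem gives $\E[a_0^2 \1_{|a_0| > t}] \sim \frac{\alpha}{2-\alpha} t^2 g(t)$ as $t \to \infty$ (valid since $\alpha < 2$). Plugging $t = Mc_N$ and using Lemma~\ref{fact 1} ($N\ell(Mc_N)/c_N^\alpha \to 1$, hence $N g(Mc_N) = N\ell(Mc_N)(Mc_N)^{-\alpha} \to M^{-\alpha}$), we find $c_N^{-2} N\, \E[a_0^2\1_{|a_0| > Mc_N}] \to \frac{\alpha}{2-\alpha} M^{2}\cdot M^{-\alpha}\cdot$ (constant) $= O(M^{2-\alpha})$, which is bounded but does \emph{not} vanish.

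This is the main obstacle: the naive second-moment bound blows up (or merely stays bounded) because the few largest entries dominate $\sum a_j^2 \1_{|a_j| > Mc_N}$ and truncating them at $M$ still leaves a contribution of order $(|b_{(0)}| - M)^2$ which is $\Omega(1)$. To get around it, I would use the Lévy distance more cleverly: instead of comparing $H$ and $\fH$ directly, note that the \emph{matrix} $D - D^\M$ has low rank in a suitable sense — the heavy-tailed input means that, with high probability, only finitely many (independent of $N$) coordinates $b_j$ exceed $M$, say those indexed by a set $S_M$ with $|S_M| \le K$ on the relevant event. On that event $D - D^\M = \sum_{j \in S_M} 2(b_j - b_j^\M)C_j$ where $C_j = \diag(\cos(2\pi jk/2N))_k$; this is not low rank as a matrix, but $PDP - PD^\M P$ restricted appropriately is, since each $C_j$ is a rank-$O(1)$ perturbation structure after conjugation by $F$. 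Alternatively — and this is cleaner — I would split $d_k - d_k^\M$ by separating the top-$K$ order statistics $|b_{(0)}|, \dots, |b_{(K-1)}|$ from the rest: the contribution of the bulk (order statistics beyond the $K$-th) to $\frac{1}{2N}\|D - D^\M\|_\f^2$ is $\le C c_N^{-2} N\, \E[a_0^2 \1_{Mc_N < |a_0| \le (\text{threshold})}]$ which can be made small by choosing the threshold, while the top-$K$ contribution, being a perturbation of rank $O(K)$ in the Fourier basis, changes the empirical spectral distribution by at most $O(K/N) \to 0$ in Lévy distance via the rank inequality $\dl(\mu_A, \mu_{A+B}) \le \mathrm{rank}(B)/N$. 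Under \eqref{eq:trunc_assump}, $K = M = L^{1/9} \to \infty$ slowly, so first sending $N \to \infty$ kills the rank term and then sending $L \to \infty$ (hence $M \to \infty$) kills the bulk tail term via dominated convergence and Lemma~\ref{fact 1}. Assembling: $\limsup_N \E\dl(\mu_H, \mu_{\fH}) \le C(M^{2-\alpha}\cdot \text{small})^{1/4} + 0$, and letting $L \to \infty$ finishes the proof.
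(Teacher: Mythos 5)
You correctly identify that the naive Hoffman--Wielandt bound fails here: the Frobenius-norm estimate for $\frac{1}{2N}\|D - D^\M\|_\f^2$ yields $O(M^{2-\alpha})$, which in fact diverges as $M = L^{1/9}\to\infty$, so that route is dead on arrival. This is a real insight. However, the fix you propose contains a genuine error. You assert that the top-$K$ contribution to $D-D^\M$ is ``a perturbation of rank $O(K)$ in the Fourier basis.'' This is false. Undoing the Fourier conjugation, that contribution corresponds to a symmetric circulant matrix whose generating sequence has $O(K)$ nonzero entries, and such a matrix is generically of full rank $2N$: the diagonal of Fourier coefficients of a $K$-sparse sequence typically has all $2N$ entries nonzero. (Already perturbing just $a_1$ in the Toeplitz matrix produces the path-graph adjacency matrix, of rank $N-1$.) So the rank inequality does not deliver $O(K/N)$ for the top-$K$ piece, and the argument does not close.

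The paper's proof is simpler and uses the same tool --- the rank inequality from \cite[Theorem A.43]{Bai1} --- but applied to the \emph{entire} difference, together with the crude-but-valid deterministic bound
\[
\mathrm{rank}(D-D^\M) \;\leq\; 2N\sum_{j=0}^{N-1}\1_{\{|b_j|>M\}},
\]
which holds because the left side vanishes when every $|b_j|\leq M$, and otherwise the right side is at least $2N$. The mechanism is not that the perturbation has low rank (it does not), but that it is \emph{typically zero}. Passing to the Kolmogorov--Smirnov distance and taking expectations gives
\[
\E\,\dks(\mu_H,\mu_{\fH}) \;\leq\; \E\sum_{j=0}^{N-1}\1_{\{|b_j|>M\}} \;=\; N\P\bigl(|a_0|\geq Mc_N\bigr) \;\longrightarrow\; M^{-\alpha}
\]
by Lemma~\ref{fact 1}, and since $\dl\leq\dks$, sending $M\to\infty$ finishes. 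Your split into top-$K$ and bulk is unnecessary; you should instead apply the rank inequality globally and exploit the fact that the probability that \emph{any} entry is truncated tends to $M^{-\alpha}$.
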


\begin{proof}
For two probability measures $\nu_1$ and $\nu_2$ on $\R$, define the Kolmogorov-Smirnov distance between them as \begin{align*}
\dks(\nu_1,\nu_2) = \sup_{t\in \R} |\nu_1((-\infty, t]) - \nu_2((-\infty, t])|.
\end{align*}
Note that $\dl(\nu_1,\nu_2) \leq \dks(\nu_1, \nu_2)$ (see, for example, \cite{Hub}). From the rank inequality (\cite[Theorem A.43]{Bai1}), we obtain that \begin{align*}
	\E\;\dks(\mu_H, \mu_{\fH}) & \leq  \frac{1}{2N}\E\;\mathrm{rank}(H-\fH) \leq \frac{1}{2N}\E\;\mathrm{rank}(D-D^\M)\\
	& \leq \frac{1}{2N}\E \Bigl[ 2N\sum_{j=0}^{N-1}\1_{\{|b_j| >M\}}\Bigr] \leq N\P\lp|a_0|\geq Mc_N\rp = \frac{N \ell (Mc_N)}{(Mc_N)^\alpha} \to M^{-\alpha},
\end{align*}
where in the last step we used Lemma \ref{fact 1}. Sending $M = L^{1/9}\to \infty$ completes the proof.
\end{proof}

\begin{lemma} \label{lem 4.5}
\begin{align*}
\lim_{L\to \infty} \limsup_N \E \dl ( \mu_{\fH}, \mu_{\sH}) = 0.
\end{align*}
\end{lemma}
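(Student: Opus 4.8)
The plan is to bound the Lévy distance between $\mu_{\fH}$ and $\mu_{\sH}$ via the Hoffman–Wielandt inequality (Lemma \ref{lem 4.1}), which reduces the problem to controlling $\frac{1}{2N}\E\|\fH - \sH\|_\f^2 = \frac{1}{2N}\E\|PD^\M P - P^\L D^\M P^\L\|_\f^2$. First I would split the difference using the telescoping identity
\begin{align*}
PD^\M P - P^\L D^\M P^\L = (P - P^\L)D^\M P + P^\L D^\M (P - P^\L),
\end{align*}
so that by the triangle inequality for the Frobenius norm it suffices to bound $\frac{1}{2N}\E\|(P-P^\L)D^\M P\|_\f^2$ and $\frac{1}{2N}\E\|P^\L D^\M(P-P^\L)\|_\f^2$ separately. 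For each piece I would use submultiplicativity $\|ABC\|_\f \le \|A\|_\op \|B\|_\op \|C\|_\f$ (or a variant), together with $\|P\|_\op = 1$, $\|P^\L\|_\op \le C\log L$ (Lemma \ref{lem 2.3}), and the key observation that $P - P^\L$ is the ``tail'' part of $P$ supported on $\{|k-l| > L\} \cap \{|k-l| < 2N - L\}$, whose entries decay like $C/\min(|k-l|, 2N-|k-l|)$ by \eqref{eqn 2.7}. This gives a row-sum bound $\sum_l |(P-P^\L)(k,l)| \le C \sum_{l > L} 1/l$, but that diverges, so instead I would bound $\|P-P^\L\|_\op$ by looking at $\|P - P^\L\|_\f^2 / (2N) \le C \sum_{m=L+1}^{N} m^{-2} \le C/L$, hence $\|P - P^\L\|_\f \le C\sqrt{N/L}$.

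Carrying this through: $\|(P - P^\L)D^\M P\|_\f \le \|P-P^\L\|_\f \|D^\M\|_\op \|P\|_\op \le C\sqrt{N/L}\cdot \|D^\M\|_\op$. The quantity $\|D^\M\|_\op = \max_k |d_k^\M|$ needs an a priori bound; since $d_k^\M = 2\sum_{j=0}^{N-1} b_j^\M \cos(\pi j k/N)$ with $|b_j^\M| \le M$, the trivial bound is $\|D^\M\|_\op \le 2NM$, which is far too large. So the more careful route is not to pull out $\|D^\M\|_\op$ but rather to keep $D^\M$ inside and compute the expected Frobenius norm directly:
\begin{align*}
\frac{1}{2N}\E\|(P-P^\L)D^\M P\|_\f^2 = \frac{1}{2N}\sum_{k,l}\E\Big| \sum_{m}(P-P^\L)(k,m) d_m^\M P(m,l)\Big|^2,
\end{align*}
then expand the square, use that $\E[(d_m^\M)^2]$ and $\E[d_m^\M d_{m'}^\M]$ are $O(1)$ uniformly (since $\E (b_j^\M)^2 = c_N^{-2}\E(a_j^\M)^2 \le c_N^{-2}\int_0^{Mc_N} 2t\, g(t)\,dt = O(N^{-1}M^{2-\alpha})$ by Karamata's theorem and Lemma \ref{fact 1}, so $\E(d_m^\M)^2 = O(M^{2-\alpha})$ after summing $N$ terms), and then bound $\sum_{k,l,m,m'} |(P-P^\L)(k,m)||(P-P^\L)(k,m')||P(m,l)||P(m',l)|$ using the decay estimate \eqref{eqn 2.7}. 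The sum over $l$ of $|P(m,l)||P(m',l)| = (PP^*)(m,m') = P(m,m')$ telescopes nicely (since $P$ is a projection), and then one is left with $\sum_{k,m,m'}|(P-P^\L)(k,m)||(P-P^\L)(k,m')||P(m,m')|$, which by Schur-test-type estimates on the tail-decay is $O(N \cdot (\log L)^2 / L)$ or similar. Combined with the $M^{2-\alpha}$ factor and $M = L^{1/9}$, this is $O(L^{(2-\alpha)/9} (\log L)^2 / L) \to 0$ as $L \to \infty$.

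Putting the pieces together: $\E\dl^4(\mu_{\fH},\mu_{\sH}) \le \frac{1}{2N}\E\|\fH - \sH\|_\f^2 \to 0$ as $L \to \infty$, uniformly in $N$ (or after $\limsup_N$), and by Jensen's inequality $\E\dl(\mu_{\fH},\mu_{\sH}) \le (\E\dl^4(\mu_{\fH},\mu_{\sH}))^{1/4} \to 0$, which is the claim. The main obstacle, and the step requiring the most care, is the moment bound $\E(d_m^\M)^2 = O(M^{2-\alpha})$ together with the combinatorial/Schur-test estimate on $\sum_{k,m,m'}|(P-P^\L)(k,m)||(P-P^\L)(k,m')||P(m,m')|$: one must be careful with the circular nature of the index set (the $|k-l| \ge 2N - L$ wraparound in the definition of $P^\L$) and with the fact that the naive row-sum of $|P(k,l)|$ over all $l$ is $O(\log N)$, not $O(1)$, so a plain $\|\cdot\|_\infty \to \|\cdot\|_\op$ bound loses a logarithm that must be tracked (hence the $\|P^\L\|_\op \le C\log L$ input) and absorbed by the polynomial gain $1/L$ from the truncation. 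The relation $M = L^{1/9}$ in \eqref{eq:trunc_assump} is exactly what guarantees the polynomial decay in $L$ dominates.
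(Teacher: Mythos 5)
Your skeleton matches the paper's: Hoffman--Wielandt (Lemma \ref{lem 4.1}), the split $\fH - \sH = (P-P^\L)D^\M P + P^\L D^\M(P-P^\L)$, the input $\|P^\L\|_\op \le C\log L$, the tail estimate from \eqref{eqn 2.7}, the truncated second moment $N\E(b_0^\M)^2 \to \tfrac{2}{2-\alpha}M^{2-\alpha}$, and Jensen at the end. But the central Frobenius-norm estimate, where you correctly recognize that $\|D^\M\|_\op$ cannot be pulled out, goes astray. The paper's resolution is much simpler than your full expansion: peel off only the \emph{outer} factor in operator norm, $\|(P-P^\L)D^\M P\|_\f \le \|(P-P^\L)D^\M\|_\f\,\|P\|_\op$, and then exploit that $D^\M$ is \emph{diagonal}, so
\begin{align*}
\|(P-P^\L)D^\M\|_\f^2 = \sum_{k}(d_k^\M)^2\,\|(P-P^\L)e_k\|_2^2 \le \frac{C}{L}\sum_k (d_k^\M)^2 = \frac{4CN}{L}\sum_{j=0}^{N-1}(b_j^\M)^2,
\end{align*}
the last step being an exact Parseval identity. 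This kills all cross terms and reduces everything to $N\E(b_0^\M)^2$.

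Your alternative route has three concrete problems. First, $\sum_l |P(m,l)||P(m',l)|$ does \emph{not} telescope to $|P(m,m')|$: the projection identity $PP^* = P$ requires the signed entries, and with absolute values you only get the Cauchy--Schwarz bound $\le \sqrt{P(m,m)P(m',m')} = 1/2$, losing all decay in $|m-m'|$. Second, the remaining sum $\sum_{k,m,m'}|(P-P^\L)(k,m)||(P-P^\L)(k,m')|\cdot O(1) = \sum_k\bigl(\sum_m|(P-P^\L)(k,m)|\bigr)^2$ involves row sums of $|P - P^\L|$ of order $\log(N/L)$, not $\log L$, so your claimed $O(N(\log L)^2/L)$ is unsubstantiated and a surviving $(\log N)^2$ would destroy the $\limsup_N$. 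Third, the uniform-in-$m$ bound $\E(d_m^\M)^2 = O(M^{2-\alpha})$ is actually false in general: for $\alpha \in (1,2)$ with $\E a_0 \ne 0$, the cross terms give $\E(d_0^\M)^2 \ge (2N\,\E b_0^\M)^2 \asymp N^{2-2/\alpha} \to \infty$. Only the \emph{average} over $m$ is of order $M^{2-\alpha}$, which is precisely what the exact Parseval identity delivers and why the paper uses it.
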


\begin{proof}
We use Lemma \ref{lem 4.1} to write
\begin{align}
\dl^4(\mu_{\fH}, \mu_{\sH}) & \leq \frac{1}{2N}\|\fH-\sH\|_{\f}^2 \leq \frac{1}{2N} \lp \|(P - P^\L) D^\M P\|_{\f}^2 + \|P^\L D^\M (P - P^\L)\|_{\f}^2 \rp \notag \\
& \leq \frac{1}{2N} \lp \|(P - P^\L) D^\M\|_{\f}^2\|P\|_\op^2 + \|P^\L\|_\op^2\| D^\M (P - P^\L)\|_{\f}^2 \rp \label{lem 4.5 eqn 1}.
\end{align}
Further, $\|D^\M(P-P^\L)\|_\f = \|(P-P^\L)D^\M\|_\f.$ Using Lemma \ref{lem 2.3} we have from \eqref{lem 4.5 eqn 1} that for some universal constant $C>0$,
\begin{align}\label{lem 4.5 eqn 2}
\dl^4(\mu_{\fH}, \mu_{\sH}) & \leq \frac{C(\log L)^2}{N} \|(P-P^\L)D^\M\|_F^2.
\end{align}
From \eqref{eqn 2.7}, for any $k\in[2N]$ we have \begin{align*}
\|(P-P^\L) e_k\|_2^2 & = \sum_{j \in [2N]: L <|j-k|<2N-L} |P(j,k)|^2 \leq C\sum_{j=L}^{2N} \frac{1}{j^2} \leq \frac{C}{L}.
\end{align*} Hence,
\begin{align}\label{lem 4.5 eqn 3}
	 \|(P-P^\L)D^\M\|_\f^2 &  = \sum_{k=0}^{2N-1} \|(P-P^\L)D^\M e_k\|_2^2 = \sum_{k=0}^{2N-1} (d_k^\M)^2 \|(P-P^\L) e_k\|_2^2 \leq \frac{C}{L}\sum_{k=0}^{2N-1} (d_k^\M)^2.
\end{align}
By Parseval's theorem, we obtain  
\begin{align}\label{lem 4.5 eqn 4}
\sum_{k=0}^{2N-1} (d_k^\M)^2 & = 4N\sum_{j=0}^{N-1}(b_j^\M)^2. 
\end{align}
Indeed, recalling that $b_N = 0$, and defining $b_{2N} = b_0$ so that we have $b_j = b_{2N-j}$ for $j\in[N]$, we can write
\begin{align*}
	\sum_{k=0}^{2N-1} (d_k^\M)^2 &  = \sum_{k=0}^{2N-1} \Big(2 \sum_{j=0}^{N-1} b_j^\M \cos\Big(\frac{2\pi jk}{2N}\Big)\Big)^2 = \sum_{k=0}^{2N-1} \Big| \sum_{j=0}^{2N} b_j^\M \exp\Big(\frac{2\pi ijk}{2N}\Big)\Big|^2 \\
	& = \sum_{k=0}^{2N-1} \sum_{j,l = 0}^{2N} b_j^\M b_l^\M\exp\Big(\frac{2\pi i(j-l)k}{2N}\Big) = \sum_{j,l = 0}^{2N} b_j^\M b_l^\M \sum_{k=0}^{2N-1}  \exp\Big(\frac{2\pi i(j-l)k}{2N}\Big)\\
	& = 2N\sum_{j=0}^{2N} (b_j^\M)^2 = 4N \sum_{j=0}^{N-1}(b_j^\M)^2.
\end{align*}
Hence from \eqref{lem 4.5 eqn 2}-\eqref{lem 4.5 eqn 4} we obtain
\begin{align}
\limsup_N \E \dl^4(\mu_{\fH}, \mu_{\sH}) & \leq \limsup_N \frac{C( \log L)^2}{NL} \E \sum_{k=0}^{2N-1} (d_k^\M)^2 = \limsup_N \frac{C( \log L)^2}{L} \E \sum_{j=0}^{N-1} (b_j^\M)^2 \notag \\
 	& =\limsup_N \frac{CN(\log L)^2}{L} \E (b_0^\M)^2 \label{lem 4.5 eqn 5}
\end{align}
We now use the following result on the truncated moments of $a_0$ (\cite[Theorem VII.9.2]{Fel})
\begin{align*}
\lim_{t \to \infty} \frac{\E [|a_0|^2 \1_{\{|a_0| \leq t\}}]}{t^{2-\alpha} \ell(t)} = \frac{\alpha}{2-\alpha}.
\end{align*}
Together with Lemma \ref{fact 1}, we obtain that as $N\to \infty$, \begin{align}
N \E (b_0^\M)^2 = & N \E [b_0^2\1_{|b_0| \leq M}] + N M^2 \P( |b_0| \geq M) \notag\\
 & = \frac{\E [a_0^2 \1_{\{|a_0| \leq Mc_N\}}]}{c_N^2}\cdot \frac{c_N^\alpha}{M^{2-\alpha}\ell(Mc_N)} \cdot \frac{N \ell(Mc_N)}{c_N^\alpha} \cdot M^{2-\alpha} + NM^2 \frac{\ell(Mc_N)}{(Mc_N)^\alpha} \notag \\
 & \to \frac{2}{2-\alpha} M^{2-\alpha}. \label{eq:limit_expn}
\end{align}

Hence, from \eqref{lem 4.5 eqn 5}, \begin{align*}
\limsup_N \E \dl^4(\mu_{\fH}, \mu_{\sH}) \leq \frac{C (\log L)^2}{L}\frac{2}{2-\alpha}M^{2-\alpha} = \frac{C}{2-\alpha} (\log L)^2 L^{-(7+\alpha)/9}.
\end{align*}
The result now follows from Jensen's inequality and then taking $L\to \infty$.
\end{proof}

\begin{lemma} \label{lem 4.6}
There exists some constant $C$ such that for all $K > \frac{2}{\alpha}$, 
\begin{align*}
\limsup_N \sum_{j=K}^{N-1} \E( b_{(j)}^\M)^2 \leq C K^{1-2/\alpha}
\end{align*}
\end{lemma}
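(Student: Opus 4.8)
The plan is to represent the sum as an integral of truncated tail counts, bound the resulting binomial expectation sharply, and finish with Karamata-type asymptotics for regularly varying functions.

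\emph{Reduction to a tail integral.} Since $(b_{(j)}^\M)^2=\min(|b_{(j)}|^2,M^2)$, the layer-cake identity $\min(y,M^2)=\int_0^{M^2}\1_{\{y\ge s\}}\,ds$ and the substitution $s=t^2$ give
\[
\sum_{j=K}^{N-1}(b_{(j)}^\M)^2=\int_0^{M}2t\,(S_N(t)-K)_+\,dt,\qquad S_N(t):=\#\{i\in[N]:|b_i|\ge t\},
\]
because exactly the $S_N(t)$ largest values of $|b_i|$ are $\ge t$, so $\sum_{j=K}^{N-1}\1_{\{|b_{(j)}|\ge t\}}=(S_N(t)-K)_+$. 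As $S_N(t)\sim\mathrm{Bin}(N,g(tc_N))$ we have $\E S_N(t)=Ng(tc_N)=:\lambda_N(t)$; taking expectations and enlarging the range (the integrand is nonnegative, so the bound becomes $M$-free) yields $\sum_{j=K}^{N-1}\E(b_{(j)}^\M)^2\le\int_0^\infty 2t\,\E(S_N(t)-K)_+\,dt$.

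\emph{The key bound and a split.} Two estimates of $\E(S_N(t)-K)_+$ will be combined. Trivially $\E(S_N(t)-K)_+\le\E S_N(t)=\lambda_N(t)$. On the other hand, the elementary inequality $(n-K)_+\le(K+1)\binom{n}{K+1}$ (valid for every integer $n\ge0$) and the binomial factorial-moment identity $\E\binom{S_N(t)}{K+1}=\binom{N}{K+1}g(tc_N)^{K+1}$ give $\E(S_N(t)-K)_+\le(K+1)\binom{N}{K+1}g(tc_N)^{K+1}\le\lambda_N(t)^{K+1}/K!$. Put $\beta_K:=(K!)^{1/K}$, so that $\lambda_N(t)\le\lambda_N(t)^{K+1}/K!$ exactly when $\lambda_N(t)\ge\beta_K$, and let $t_N:=\inf\{t>0:\lambda_N(t)\le\beta_K\}$, which is finite for $N$ large since $\lambda_N$ is non-increasing. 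Using the first bound on $[0,t_N]$ and the second on $(t_N,\infty)$,
\[
\sum_{j=K}^{N-1}\E(b_{(j)}^\M)^2\le\int_0^{t_N}2t\,\lambda_N(t)\,dt+\frac1{K!}\int_{t_N}^\infty 2t\,\lambda_N(t)^{K+1}\,dt .
\]

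\emph{Estimating the two integrals.} For the first, $\lambda_N(t)=N\P(|b_0|\ge t)$ gives $\int_0^{t_N}2t\lambda_N(t)\,dt=N\,\E[\min(|b_0|,t_N)^2]\le N\,\E[|b_0|^2\1_{\{|b_0|\le t_N\}}]+t_N^2\lambda_N(t_N)$. The regular variation in the definition of $c_N$ forces $t_N\to\beta_K^{-1/\alpha}$, and hence $\lambda_N(t_N)=\frac1{t_N^\alpha}\cdot\frac{N\ell(t_Nc_N)}{c_N^\alpha}\to\beta_K$ by Lemma~\ref{fact 1}; together with $b_0=c_N^{-1}a_0$ and the truncated second-moment asymptotics $\E[|a_0|^2\1_{\{|a_0|\le s\}}]\sim\frac{\alpha}{2-\alpha}s^{2-\alpha}\ell(s)$ (already invoked in the proof of Lemma~\ref{lem 4.5}), the first integral converges to $\frac{\alpha}{2-\alpha}\beta_K^{1-2/\alpha}+\beta_K^{1-2/\alpha}=\frac{2}{2-\alpha}\beta_K^{1-2/\alpha}$. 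For the second integral, the change of variables $s=tc_N$ and Karamata's theorem (applicable since $\alpha(K+1)>2$) give
\[
\frac1{K!}\int_{t_N}^\infty 2t\,\lambda_N(t)^{K+1}\,dt=\frac{N^{K+1}}{K!\,c_N^{2}}\int_{t_Nc_N}^\infty 2s\,g(s)^{K+1}\,ds\ \sim\ \frac{2}{\alpha(K+1)-2}\,\Bigl(\tfrac{N\ell(t_Nc_N)}{c_N^{\alpha}}\Bigr)^{K+1}\frac{t_N^{2-\alpha(K+1)}}{K!},
\]
which converges to $\frac{2}{\alpha(K+1)-2}\beta_K^{1-2/\alpha}$ since $N\ell(t_Nc_N)/c_N^\alpha\to1$, $t_N^{2-\alpha(K+1)}\to\beta_K^{K+1-2/\alpha}$, and $\beta_K^{K+1}/K!=\beta_K$. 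Finally $\beta_K=(K!)^{1/K}\ge K/e$, so $\beta_K^{1-2/\alpha}\le e^{2/\alpha-1}K^{1-2/\alpha}$, while $\alpha(K+1)-2>\alpha$ whenever $K>2/\alpha$; adding the two contributions gives $\limsup_N\sum_{j=K}^{N-1}\E(b_{(j)}^\M)^2\le CK^{1-2/\alpha}$ for some $C=C(\alpha)$.

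\emph{Main obstacle.} The whole difficulty is extracting exactly the exponent $1-2/\alpha$ with a constant free of $K$: applying $\E(S_N(t)-K)_+\le\E S_N(t)$ on all of $(0,\infty)$ is hopelessly lossy, so one must use the sharp factorial bound $\E(S_N(t)-K)_+\le\lambda_N(t)^{K+1}/K!$ on the regime $\lambda_N(t)\le\beta_K$, and it is essential to cut at precisely $\beta_K=(K!)^{1/K}\sim K/e$, so that the algebraic identity $\beta_K^{K+1}/K!=\beta_K$ makes the factorially large and small quantities cancel and collapses the $K$-dependence to $\beta_K^{1-2/\alpha}$. The remaining manipulations with the slowly varying functions ($t_N\to\beta_K^{-1/\alpha}$, Lemma~\ref{fact 1}, Karamata's theorem, the Feller truncated-moment estimate) are routine but must be carried out with attention to uniformity as $N\to\infty$.
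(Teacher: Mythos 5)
Your proof is correct, but it takes a genuinely different route from the paper's. The paper argues by subtraction: it combines the already-computed limit $N\E(b_0^\M)^2 \to \tfrac{2}{2-\alpha}M^{2-\alpha}$ with the convergence of the top-$K$ order statistics (Lemma \ref{lem 3.3}), so that $\limsup_N\sum_{j\ge K}\E(b_{(j)}^\M)^2$ is bounded by the difference $\tfrac{2}{2-\alpha}M^{2-\alpha}-\sum_{j<K}\E(\Gamma_j^\M)^{-2/\alpha}$; the exact identity $\sum_{j\ge 0}\E(\Gamma_j^\M)^{-2/\alpha}=\tfrac{2}{2-\alpha}M^{2-\alpha}$ for the Poisson arrival times then reduces everything to the tail $\sum_{j\ge K}\E\Gamma_j^{-2/\alpha}$, which is estimated via ratios of Gamma functions. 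You instead stay entirely at finite $N$: the layer-cake identity turns the sum into $\int 2t\,\E(S_N(t)-K)_+\,dt$ for the binomial tail count $S_N(t)$, and the factorial-moment bound $\E(S_N(t)-K)_+\le \lambda_N(t)^{K+1}/K!$, cut at the level $\lambda_N=\beta_K=(K!)^{1/K}$, produces the exponent $1-2/\alpha$ after Karamata. Both yield the same order in $K$; the paper's argument is shorter because it recycles \eqref{eq:limit_expn} and the limiting Poisson structure, whereas yours is self-contained at the level of the entries (needing only Lemma \ref{fact 1}, the Feller truncated-moment estimate, and the uniform convergence theorem for regularly varying functions to justify $t_N\to\beta_K^{-1/\alpha}$ and $\lambda_N(t_N)\to\beta_K$, which you correctly flag). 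One cosmetic remark: your ``first integral converges to $\tfrac{2}{2-\alpha}\beta_K^{1-2/\alpha}$'' should read that the stated upper bound on it converges to that value, which is all the $\limsup$ requires.
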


\begin{proof}
By an application of the continuous mapping theorem and the bounded convergence theorem, we obtain from  Lemma \ref{lem 3.3}  that 
$$\sum_{j=0}^{K-1} \E(b_{(j)}^\M)^2 \to \sum_{j=0}^{K-1} \E  (\Gamma_j^\M)^{-2/\alpha}.$$ This, together with \eqref{eq:limit_expn} yields that 
\begin{align}
    \limsup_N \sum_{j=K}^{N-1}\E (b_{(j)}^\M )^2 & \leq \limsup_N \sum_{j=0}^{N-1} \E (b_{(j)}^\M )^2 - \liminf_N \sum_{j=0}^{K-1} \E (b_{(j)}^\M )^2 \notag \\
    & = \lim_{N\to\infty} N \E (b_0^\M)^2 - \lim_{N\to\infty} \sum_{j=0}^{K-1} \E (b_{(j)}^\M )^2  = \frac{2}{2-\alpha}M^{2-\alpha} - \sum_{j=0}^{K-1} \E (\Gamma_j^\M)^{-2/\alpha}. \label{eq: bound_limsup}
\end{align}
Observing that $\Gamma_j \sim \mathrm{Gamma}(j+1,1)$ for each $j\geq 0$, one deduces that \begin{align*}
    \sum_{j=0}^\infty \E (\Gamma_j^\M)^{-2/\alpha} = \sum_{j=0}^\infty\int_0^\infty \min(t^{-2/\alpha}, M^2) \frac{t^j}{j!}e^{-t}dt = \int_0^\infty \min(t^{-2/\alpha}, M^2) dt = \frac{2}{2-\alpha}M^{2-\alpha}.
\end{align*}
Note that   $\E \Gamma_j^{-2/\alpha} < \infty$ if $j > 2/\alpha -1 $.
Continuing from \eqref{eq: bound_limsup},  we have, for $K > 2/\alpha,$
\begin{align*}
     \limsup_N \sum_{j=K}^{N-1}\E (b_{(j)}^\M )^2 & \leq \sum_{j=K}^\infty \E (\Gamma_j^\M)^{-2/\alpha} \leq \sum_{j=K}^\infty \E \Gamma_j^{-2/\alpha} = \sum_{j=K}^\infty \frac{\Gamma(j-\frac{2}{\alpha}+1)}{\Gamma(j+1)}.
\end{align*}
Let $k_0 = \lceil \frac{2}{\alpha}\rceil+1$. From the  recurrence relation $\Gamma(x+1) = x\Gamma(x)$ for any $x >0$, we estimate 
     \begin{align*}
    \sum_{j=K}^\infty \frac{\Gamma(j-\frac{2}{\alpha}+1)}{\Gamma(j+1)} & = \frac{\Gamma(k_0 - \tfrac{2}{\alpha})}{\Gamma(k_0)}\sum_{j=K}^\infty 
     \prod_{k=k_0}^j \frac{(k-\frac{2}{\alpha})}{ k}= \frac{\Gamma(k_0 - \tfrac{2}{\alpha})}{\Gamma(k_0)}\sum_{j=K}^\infty 
     \prod_{k=k_0}^j \Bigl(1-\frac{2}{\alpha k}\Bigr) \\
     & \leq \frac{\Gamma(k_0 - \tfrac{2}{\alpha})}{\Gamma(k_0)}\sum_{j=K}^\infty \exp\Bigl(-\frac{2}{\alpha}\sum_{k=k_0}^j \frac{1}{k}\Bigr)\leq C' \sum_{j=K}^\infty j^{-2/\alpha} \leq C K^{1-2/\alpha},
\end{align*}
for some constants $C'$ and $C$ that depend only on $\alpha$.
\end{proof}

\begin{lemma} \label{lem 4.7} 
We have
\begin{align*}
\lim_{L \to \infty} \limsup_N \E \dl( \mu_{\sH}, \mu_{\tH} )= 0
\end{align*}
\end{lemma}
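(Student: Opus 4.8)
The plan is to bound $\dl(\mu_{\sH}, \mu_{\tH})$ via the Hoffman--Wielandt inequality (Lemma \ref{lem 4.1}), exactly as in the proofs of Lemmas \ref{lem 4.5}, and reduce everything to a Fr\"obenius-norm estimate on $\sH - \tH = P^\L(D^\M - D^\MK)P^\L$. Since $D^\M - D^\MK$ is diagonal with entries $d_k^\M - d_k^\MK = 2\sum_{j=K}^{N-1} b_{(j)}^\M \cos(2\pi k \sigma_N(j)/(2N))$, we get
\begin{align*}
\dl^4(\mu_{\sH}, \mu_{\tH}) \le \frac{1}{2N}\|P^\L(D^\M - D^\MK)P^\L\|_\f^2 \le \frac{\|P^\L\|_\op^2}{2N}\|(D^\M - D^\MK)P^\L\|_\f^2 \le \frac{C(\log L)^2}{2N}\sum_{k=0}^{2N-1}(d_k^\M - d_k^\MK)^2,
\end{align*}
using Lemma \ref{lem 2.3} and $\|P^\L e_k\|_2 \le \|P^\L\|_\op \le C\log L$ (or, more carefully, $\sum_k \|P^\L e_k\|_2^2 (d_k^\M-d_k^\MK)^2 \le \|P^\L\|_\op^2 \sum_k (d_k^\M - d_k^\MK)^2$). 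Then I would apply the same Parseval computation as in \eqref{lem 4.5 eqn 4}: writing $b_j^{\M, \ge K}$ for the sequence which equals $b_{(j')}^\M$ when $\sigma_N(j')=j$ with $j' \ge K$ and zero otherwise, one gets $\sum_{k=0}^{2N-1}(d_k^\M - d_k^\MK)^2 = 4N\sum_{j=K}^{N-1}(b_{(j)}^\M)^2$.

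Taking expectations and using Lemma \ref{lem 4.6} (valid since $K = M = L^{1/9} > 2/\alpha$ for $L$ large), this yields
\begin{align*}
\limsup_N \E\,\dl^4(\mu_{\sH},\mu_{\tH}) \le \limsup_N \frac{C(\log L)^2}{2N}\cdot 4N\sum_{j=K}^{N-1}\E(b_{(j)}^\M)^2 \le C(\log L)^2 K^{1-2/\alpha} = C(\log L)^2 L^{(1-2/\alpha)/9}.
\end{align*}
Since $1 - 2/\alpha < 0$ for $\alpha \in (0,2)$, the exponent of $L$ is negative, so the right-hand side tends to $0$ as $L \to \infty$. Finally, by Jensen's inequality $\E\,\dl(\mu_{\sH},\mu_{\tH}) \le (\E\,\dl^4(\mu_{\sH},\mu_{\tH}))^{1/4}$, so $\lim_{L\to\infty}\limsup_N \E\,\dl(\mu_{\sH},\mu_{\tH}) = 0$, as claimed.

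The argument is essentially a repeat of Lemma \ref{lem 4.5} with the band-truncation of $P$ replaced by the rank-truncation of the diagonal $D^\M$ down to its top $K$ contributing frequencies; the one genuinely new input is the tail bound of Lemma \ref{lem 4.6} on $\sum_{j \ge K}\E(b_{(j)}^\M)^2$, which replaces the crude second-moment bound $N\E(b_0^\M)^2 \asymp M^{2-\alpha}$ used there. The main thing to be careful about is the bookkeeping in the Parseval step: the entries of $D^\M - D^\MK$ involve the random permutation $\sigma_N$, but since $\{0,\dots,N-1\} = \sigma_N(\{0,\dots,N-1\})$ the sum $\sum_{j=K}^{N-1}(b_{(j)}^\M)^2 = \sum_{j: \sigma_N^{-1}(j)\ge K}(b_j^\M)^2$ is unaffected by the relabeling, and the identity $\sum_k(d_k^\M - d_k^\MK)^2 = 4N\sum_{j=K}^{N-1}(b_{(j)}^\M)^2$ goes through verbatim. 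There is no serious obstacle here; the only quantitative point to check is that the power of $L$ coming from $K^{1-2/\alpha}$ beats the $(\log L)^2$ factor, which it does for every $\alpha \in (0,2)$ under \eqref{eq:trunc_assump}.
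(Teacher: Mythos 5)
Your argument is correct and is essentially the paper's proof: Hoffman--Wielandt plus the operator-norm bound on $P^\L$, the Parseval identity $\sum_k(d_k^\M-d_k^\MK)^2 = 4N\sum_{j\ge K}(b_{(j)}^\M)^2$, and Lemma \ref{lem 4.6} followed by Jensen. The only cosmetic difference is that you peel off one factor of $\|P^\L\|_\op^2$ and absorb the other into the Fr\"obenius norm of $(D^\M-D^\MK)P^\L$, yielding $(\log L)^2$ where the paper has $(\log L)^4$; both are harmless against $K^{1-2/\alpha}=L^{(1-2/\alpha)/9}$.
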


\begin{proof}
From Lemma \ref{lem 2.3} we obtain \begin{align*}
& \frac{1}{2N}\|P^\L( D^\M -  D^\MK) P^\L\|_{\f}^2  \leq \frac{1}{2N}\|P^\L\|^4_{\op} \| D^\M -  D^\MK\|_{\f}^2 \leq \frac{C(\log L)^4}{N}\sum_{k=0}^{2N-1} \lp  d_k^\M -  d_k^\MK\rp^2.
\end{align*}
By Parseval's theorem, we have \begin{align}\label{lem 4.7 eqn 1}
\sum_{k=0}^{2N-1} \lp  d_k^\M -  d_k^\MK\rp^2 =4N \sum_{j=K}^{N-1} (b_{(j)}^\M)^2.
\end{align}
Indeed, the expressions on the left-hand side of \eqref{lem 4.7 eqn 1} and \eqref{lem 4.5 eqn 4} are similar, except that in \eqref{lem 4.7 eqn 1}, for each $k$, each of $b_{(0)}^\M, \ldots, b_{(K-1)}^\M$ has been replaced with 0. Thus, for some constant $C>0$ we have
\begin{align*}
	\frac{C(\log L)^4}{N}\sum_{k=0}^{2N-1} \lp  d_k^\M -  d_k^\MK\rp^2 
	& \leq C (\log L)^4 \sum_{j=K}^{N-1} (b_{(j)}^\M)^2
\end{align*}
and hence from Lemmas \ref{lem 4.1} and \ref{lem 4.6} we obtain that \begin{align*}
	\limsup_N \E \dl^4(\mu_{\sH}, \mu_{\tH} )
	& \leq \limsup_N \frac{1}{2N}\E \|P^\L(D^\M - D^\MK)P^\L\|_\f^2\\
    &\leq C (\log L)^4 \limsup_N \sum_{j=K}^{N-1}\E (b_{(j)}^\M)^2 \leq C(\log L)^4 K^{1-2/\alpha}.
\end{align*}
The proof is finished by using Jensen's inequality and taking $M = K = L^{1/9}\to \infty$.
\end{proof}

\section{Approximation of the spectral measure by operator truncations}\label{sec 5}
This section is divided in three parts. In the first part, we establish the properties of $\Delta^\omega$ as mentioned in Proposition \ref{prop 1.1}. Later, we prove Proposition \ref{prop 5.1} assuming relation \eqref{eq:trunc_assump}, i.e., $M = K = L^{1/9}$, we establish that 
\begin{align*}
\lim_{L\to \infty} \E \dl(\E^\omega\nu_{\tD}, \E^\omega\nu_{\Delta}) = 0.
\end{align*}
In the final section, we establish a connection between the spectral measures of $\Delta$ at $e_0$ and the unit vector $u = \sqrt{2}\Pi e_0$ by relating the Stieltjes transform of the two measures.

\subsection{Proof of Proposition \ref{prop 1.1}}
To lighten our notation, we will retain the superscript of $\omega$ only with $\P$ and $\E$ and drop it from $\varrho^\omega, \Lambda^\omega, \Delta^\omega, \D^\omega$, etc. if there is no scope of confusion.   Throughout the proof, we fix a realization of $\omega \in \Omega_1$.

\subsubsection{Proof of part (a)}
    Since $\omega \in \Omega_1$, we have that $\sum_{j=0}^{\infty}\Gamma_j^{-2/\alpha} < \infty$. By Kolmogorov's two-series theorem, $\P^\omega$-almost surely, $\varrho_k$ is finite for each $k \in \Z$.

    To prove that $\P^\omega(\sum_{k \in \Z} \varrho_k^2/(1+k^2) < \infty) =1$, we define, for $n, k \in \N$, \begin{align*}
    \xi_{k,n} : = \sum_{j=0}^n \Gamma_j^{-1/\alpha} \cos (2\pi(U_j + k\zeta_j)).
    \end{align*}
    Note that $(\xi_{k,n})_{n\geq 1}$ is a martingale adapted to the filtration $(\mathcal{G}_n)_{n\geq 0}$ where $\mathcal{G}_n$ is the $\sigma$-algebra generated by $U_0, \ldots, U_n$. Further, since $(U_j)_{j\geq 0}$ are independent we have \begin{align*}
	\E^\omega \xi_{k,n}^2 & =  \E^\omega \sum_{j=0}^n \Gamma_j^{-2/\alpha}\cos^2(2\pi(U_j+k\zeta_j)) \leq \sum_{j=0}^n \Gamma_j^{-2/\alpha}.
    \end{align*}
    Hence, \begin{align*}
    \sup_{n\geq 0} \E^\omega \xi_{k,n}^2 \leq  \sum_{j=0}^\infty \Gamma_j^{-2/\alpha} < \infty,
    \end{align*}
    i.e., $(\xi_{k, n})_{n \ge 0}$ is a $L^2$-bounded martingale and 
 thus by the martingale convergence theorem, 
    \begin{align*}
    \E^\omega \varrho_k^2 = \lim_{n \to \infty} \E^\omega \xi_{k,n}^2 \leq  \sum_{j=0}^\infty \Gamma_j^{-2/\alpha} < \infty. 
    \end{align*}
    Further, the upper bound in the above display is uniform over $k \in \Z$. Hence, \begin{align*}
        \E^\omega \sum_{k\in \Z} \frac{1}{k^2+1}  \varrho_k^2 < \infty
    \end{align*}
    from which the $\P^\omega$-almost sure finiteness of $\sum_{k \in \Z} \varrho_k^2/(1+k^2)$ follows.

\subsubsection{Proof of part (b)}
We already explained in the introduction that $\P^\omega$-almost surely, $\| \Pi \Lambda e_k \|_2 < \infty$ for all $k$. We fix a realization of $(U_j)_{j \ge 0}$ for which $\| \Pi \Lambda e_k \|_2 < \infty$ for all $k$. 

    One easily checks that $\Delta$ is Hermitian. To show that $\Delta$ is self-adjoint on the domain $\D$, we proceed along the lines of \cite[Proposition VIII.1]{Ree}. Let $\chi_n \in \lz$ be defined by \begin{align}\label{eq: def_chi}
    (\chi_n)_j&  = \begin{cases}
    1 \qquad \mathrm{if\;} |j|\leq n,\\
    0 \qquad \mathrm{otherwise}.
    \end{cases}
    \end{align}
    We need to show that the domain of the adjoint $\Delta^*$, denoted by $\D^*$, is equal to $\D$. Since $\D\subseteq \D^*$, it suffices to show the other containment. To that extent, let $v\in \D^*$. Then, by definition, $\|\Delta^* v\|_2 < \infty$. By the monotone convergence theorem, 
    \begin{align*}
    \|\Delta^* v\|_2 & = \lim_{n\to\infty}\|\chi_n\circ(\Delta^* v)\|_2
    \end{align*}
    where $\circ$ denotes entrywise product. Continuing, we have  \begin{align*}
    \|\Delta^* v\|_2 & = \lim_{n\to \infty} \sup_{\|w\|_2 = 1} |\langle w,\chi_n\circ(\Delta^* v)\rangle| = \lim_{n\to \infty} \sup_{\|w\|_2 = 1} |\langle \chi_n\circ w,\Delta^* v\rangle|
    \end{align*}
    Since $\chi_n\circ w$ has finitely many non-zero entries and $\|\Lambda\Pi e_k\|_2 < \infty$ for all $k \in \Z$, we have that $\|\Lambda \Pi(\chi_n\circ w)\|_2 <\infty$ and hence $\chi_n\circ w \in \D$. Thus, the above display can be written as \begin{align*}
	\|\Delta^* v\|_2 & = \lim_{n\to \infty}\sup_{\|w\|_2 = 1} |\langle \Delta (\chi_n\circ w),v\rangle| = \lim_{n\to \infty}\sup_{\|w\|_2 = 1} |\langle \Lambda\Pi (\chi_n\circ w),\Pi v\rangle|,
	\end{align*}
    where in the last step we used the fact that $\Pi$ is self-adjoint. Continuing,
	\begin{align*}
	\|\Delta^* v\|_2 & = \lim_{n\to \infty}\sup_{\|w\|_2 = 1} \Big|\sum_{k\in \Z} \varrho_k(\Pi(\chi_n\circ w))_k\overline{(\Pi v)_k}\Big| = \lim_{n\to \infty}\sup_{\|w\|_2 = 1} |\langle \Pi (\chi_n\circ w),\Lambda \Pi v\rangle|\\
	& = \lim_{n\to \infty}\sup_{\|w\|_2 = 1} |\langle \chi_n\circ w,\Delta v\rangle| = \lim_{n\to \infty}\sup_{\|w\|_2 = 1} |\langle w,\chi_n \circ (\Delta v)\rangle| \\
	& = \lim_{n\to \infty} \|\chi_n \circ (\Delta v)\|_2 = \|\Delta v\|_2 < \infty.
    \end{align*}
    Thus, $v \in \D$, concluding the proof.

\subsection{Proof of Proposition \ref{prop 5.1}}

As in the previous section, we fix a realization of $\omega \in \Omega_1$. Recall that  $\cC$ is the set of vectors in $\lz$ with finite support. The proof is decomposed into a few steps. In Lemma \ref{lem core} we show that $\cC$ is a core for $\Delta$. In Lemma \ref{lem strong_conv}, we show that  $$\tD v \to \Delta v, \quad \text{ for all } v \in \cC, $$ 
relying on a technical estimate from Lemma~\ref{lem key_estimate}.  We then invoke \cite[Theorem VIII.25(a)]{Ree}  to establish the convergence of $\tD$ to $\Delta$ in the strong resolvent sense, i.e., for all $v \in \lz$ and $z \in \C\setminus \R$  \begin{align*}
(\tD -zI)^{-1}v \to (\Delta - zI)^{-1}v.
\end{align*} 
Taking $v = e_0$ in the above display and then taking inner products with $e_0$, we obtain that the Stieltjes transform of $\nu_{\tD}$ converges to that of $\nu_\Delta$ and hence $\nu_{\tD} \Rightarrow \nu_\Delta$ as $L\to \infty$. Utilizing the independence of $U$ from $\omega$, we have $\E^\omega \nu_{\tD} \Rightarrow \E^\omega \nu_\Delta$, or equivalently, $\dl(\E^\omega\nu_{\tD},\E^\omega\nu_\Delta) \to 0$.  Since the L\'evy distance between two probabilities is bounded above by 1, the result follows from the dominated convergence theorem.

\begin{lemma}\label{lem core}
$\cC$ is a core for $\Delta$ and $\tD$ for any $M,K$, and  $L$.
\end{lemma}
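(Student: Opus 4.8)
The plan is to show that the closure of the restriction of $\Delta$ to $\cC$ equals $\Delta$ itself (and similarly for $\tD$). For the self-adjoint operator $\Delta$, which by Proposition~\ref{prop 1.1}(b) is already self-adjoint on its natural domain $\D = \{v \in \lz : \|\Delta v\|_2 < \infty\}$, the standard criterion is the following: $\cC$ is a core for $\Delta$ if and only if $\cC$ is dense in $\D$ with respect to the graph norm $\|v\|_\Delta^2 := \|v\|_2^2 + \|\Delta v\|_2^2$. So the first step is to reduce the problem to a graph-norm density statement. The case of $\tD$ is easier and I would dispose of it first: since $\Pi^\L$ is bounded (Lemma~\ref{lem pi_L bounded}, invoked as Lemma~\ref{lem 2.3}) and $\Lambda^\MK$ is bounded (it satisfies $\|\Lambda^\MK\|_\op \le MK$), the operator $\tD = \Pi^\L \Lambda^\MK \Pi^\L$ is a bounded self-adjoint operator on all of $\lz$; for a bounded operator every dense subspace is a core, and $\cC$ is dense in $\lz$ by construction, so $\cC$ is a core for $\tD$.

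For $\Delta$ itself, the key step is to approximate an arbitrary $v \in \D$ in graph norm by finitely supported vectors. The natural candidates are the truncations $v^{(n)} := \chi_n \circ v$, where $\chi_n$ is the indicator of $\{|j| \le n\}$ as in \eqref{eq: def_chi}. Clearly $v^{(n)} \in \cC$ and $\|v^{(n)} - v\|_2 \to 0$ by dominated convergence, so the only thing to check is $\|\Delta v^{(n)} - \Delta v\|_2 \to 0$. This is where I expect the main obstacle to lie: $\Delta = \Pi \Lambda \Pi$ is a dense product of three operators, the middle one unbounded, and the truncation $\chi_n \circ$ does \emph{not} commute with $\Pi$, so $\Delta v^{(n)}$ is not simply a truncation of $\Delta v$. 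The strategy is to write $v - v^{(n)} = \chi_n^c \circ v$ and estimate $\|\Pi \Lambda \Pi (\chi_n^c \circ v)\|_2$ directly. Since $\Pi$ is a bounded (projection) operator, it suffices to control $\|\Lambda \Pi (\chi_n^c \circ v)\|_2 = \bigl(\sum_k \varrho_k^2 \,|(\Pi(\chi_n^c\circ v))_k|^2\bigr)^{1/2}$. Here one uses the decay $|(\Pi e_l)_k| \le O_l((1+k^2)^{-1})$ together with Proposition~\ref{prop 1.1}(a) (namely $\sum_k (1+k^2)^{-1}\varrho_k^2 < \infty$, $\bP^\omega$-a.s.); the point is that as $n \to \infty$ the vector $\Pi(\chi_n^c \circ v)$ has, in a suitable sense, its mass pushed to large coordinates where the weights $(1+k^2)^{-1}$ are small and the off-diagonal tails of $\Pi$ (which decay like $1/|k-l|$) from the retained coordinates $|l|\le n$ also decay. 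Making this quantitative — splitting the sum over $k$ into $|k| \le n/2$ and $|k| > n/2$, using the $1/(k-l)$ decay of $\Pi(k,l)$ on the first piece and the tail $\sum_{|k|>n/2}(1+k^2)^{-1}\varrho_k^2 \to 0$ on the second — is the technical heart of the argument and the step most likely to require care with the dependence on $v$.

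Finally, I would assemble the pieces: given $v \in \D$ and $\varepsilon > 0$, choose $n$ large so that both $\|v^{(n)} - v\|_2 < \varepsilon$ and $\|\Delta v^{(n)} - \Delta v\|_2 < \varepsilon$ (the latter via the estimate above, possibly routed through an auxiliary Lemma analogous to the "key estimate" Lemma~\ref{lem key_estimate} referenced for the next lemma), so that $v^{(n)} \in \cC$ is within $\sqrt{2}\,\varepsilon$ of $v$ in graph norm. This shows $\cC$ is graph-norm dense in $\D$, hence a core for $\Delta$. As a remark, the same truncation-plus-decay argument works verbatim for $\fD = \Pi \Lambda^\M \Pi$ should it be needed, since $\Lambda^\M$ has the same structure with $\Gamma_j$ replaced by $\Gamma_j^\M \ge \Gamma_j$, only improving the bounds.
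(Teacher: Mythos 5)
Your architecture coincides with the paper's: dispose of $\tD$ by boundedness of $\Pi^\L$ and $\Lambda^\MK$ together with density of $\cC$, and for $\Delta$ approximate $v\in\D$ by the truncations $v_n=\chi_n\circ v$. The $\tD$ half and the reduction to graph-norm density are fine. The problem is that the quantitative estimate you propose for the hard step, $\|\Delta(v-v_n)\|_2\to 0$, does not close as sketched. Write $w=v-v_n$ (supported on $|l|>n$) and try to bound $\sum_k\varrho_k^2|(\Pi w)_k|^2$. On the range $|k|\le n/2$, the separation from the support of $w$ gives only the uniform bound $|(\Pi w)_k|^2\le Cn^{-1}\|w\|_2^2$, while Proposition \ref{prop 1.1}(a) controls $\sum_{|k|\le n/2}\varrho_k^2$ only by $O(n^2)$; this piece is therefore $O(n\|w\|_2^2)$, which need not vanish because $\|w\|_2$ may tend to $0$ arbitrarily slowly. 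To repair it you would need something like $\sum_{|k|\le n}\varrho_k^2=O(n)$ almost surely, i.e.\ a Birkhoff-type input that is not contained in Proposition \ref{prop 1.1}(a). On the range $|k|>n/2$ the weight $(1+k^2)^{-1}$ is simply not available: $(\Pi w)_k$ contains the diagonal contribution $\tfrac12 v_k$ for $|k|>n$, and for a generic $v\in\lz$ the coordinates $v_k$ do not decay like $(1+|k|)^{-1}$, so the tail $\sum_{|k|>n/2}(1+k^2)^{-1}\varrho_k^2$ cannot be paired with $|(\Pi w)_k|^2$. The decay $|(\Pi e_l)_k|=O_l((1+k^2)^{-1})$ is a statement about fixed basis vectors and does not transfer to tails of arbitrary domain vectors; this is the genuine gap in your plan.

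For comparison, the paper does not attempt this norm estimate at all. It argues softly: $v_n\to v$ in $\lz$, and for each $k$, $\langle e_k,\Delta v_n\rangle=\langle\Delta e_k,v_n\rangle\to\langle\Delta e_k,v\rangle=\langle e_k,\Delta v\rangle$ by self-adjointness and Cauchy--Schwarz, and from this coordinatewise convergence it concludes $\Delta v_n\to\Delta v$. (Even that step, taken literally, needs an extra ingredient such as a uniform bound on $\|\Delta v_n\|_2$ to place $(v,\Delta v)$ in the closed graph of $\Delta|_\cC$; alternatively one can bypass truncations entirely by observing that the adjoint computation in the proof of Proposition \ref{prop 1.1}(b) uses only finitely supported test vectors, hence shows $(\Delta|_\cC)^*=\Delta$ and therefore $\overline{\Delta|_\cC}=\Delta^*=\Delta$.) In any case, the route you propose, a direct $\ell^2$ estimate on $\Lambda\Pi(v-v_n)$ from Proposition \ref{prop 1.1}(a) alone, would fail, and you should either supply the missing almost-sure growth control on $\sum_{|k|\le n}\varrho_k^2$ and on the diagonal contribution, or switch to a duality/adjoint argument.
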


\begin{proof}
Since $\tD$ is a bounded operator and $\cC$ is dense in $\lz$, $\cC$ is a core of $\tD$. Next, we consider the case of $\Delta$.
We already showed that $e_k \in \D$ for every $k \in \Z$, which implies that $\cC \subseteq \D$. To prove the lemma, we thus need to show that the closure of the graph of $\Delta$ on $\cC$ contains the graph of $\Delta$ on $\D$.

Therefore, we need to show that for each $v\in \D$, there exists a sequence $(v_n)_{n\geq 1}$ in $\cC$  such that $(v_n, \Delta v_n) \to (v, \Delta v)$ on $\lz \times \lz$.

 Fix $v\in \D$ and consider the sequence $v_n = \chi_n\circ v$, where $\chi_n$ is as in \eqref{eq: def_chi}. Clearly, $v_n \to v$ in $\lz$. We claim that  $\Delta v_n \to \Delta v$. Indeed, for any $k\in \Z$, since $\Delta$ is self-adjoint, \begin{align*}
  \langle e_k, \Delta v_n \rangle & = \langle \Delta e_k, v_n \rangle \to \langle \Delta e_k, v \rangle = \langle e_k, \Delta v \rangle,
 \end{align*}
 where the convergence follows from Cauchy-Schwarz: since $\|\Delta e_k\|_2 < \infty$, \begin{align*}
 |\langle \Delta e_k, v - v_n\rangle |  \leq \|\Delta e_k\|_2\|v - v_n\|_2 \to 0.
 \end{align*}
 \end{proof}

\begin{lemma}\label{lem key_estimate}
Fix $\omega \in \Omega_1$. Let $$\rho_{k,K} : = \sum_{j=K}^\infty (\Gamma_j^\M)^{-1/\alpha} \cos (2\pi(U_j + k\zeta_j)).$$
Then $\P^\omega$-almost surely,
\begin{align*}
\sum_{k\in \Z} \frac{1}{1+k^2} \sup_{K \geq 0} \rho_{k,K}^2 < \infty.
\end{align*}
\end{lemma}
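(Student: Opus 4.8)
\emph{Proof proposal.} Fix $\omega \in \Omega_1$; recall that then $\Gamma_j/j \to 1$, so $C_\omega := \sum_{j=0}^\infty \Gamma_j^{-2/\alpha} < \infty$ and $(\Gamma_j^\M)^{-1/\alpha} \le \Gamma_j^{-1/\alpha}$ for every $j$. The plan is to reduce the statement to a single second-moment bound that is uniform over the coordinate $k$: I would show that there is an absolute constant $C$ (independent of $k$ and of $M$) with
\[
\E^\omega \sup_{K \ge 0} \rho_{k,K}^2 \;\le\; C\, C_\omega \qquad \text{for every } k \in \Z .
\]
Granting this, Tonelli's theorem gives $\E^\omega \sum_{k \in \Z} (1+k^2)^{-1}\sup_{K\ge 0}\rho_{k,K}^2 \le C C_\omega \sum_{k\in\Z}(1+k^2)^{-1} < \infty$, so the nonnegative random variable $\sum_{k}(1+k^2)^{-1}\sup_{K}\rho_{k,K}^2$ is finite $\P^\omega$-almost surely, which is exactly the assertion of the lemma.

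To establish the uniform bound I would use a backward martingale argument. Fix $k$ and set $\mathcal{H}_K := \sigma(U_j : j \ge K)$, a decreasing family of $\sigma$-algebras. Since each $U_j$ is uniform on $[0,1]$ we have $\E^\omega\cos(2\pi(U_j+k\zeta_j)) = 0$, so each summand of $\varrho_k^\M$ with index $j < K$ has conditional mean zero given $\mathcal{H}_K$; moreover $\varrho_k^\M \in L^2(\P^\omega)$ with $\E^\omega(\varrho_k^\M)^2 \le \sum_j(\Gamma_j^\M)^{-2/\alpha} \le C_\omega$, by the same $L^2$-martingale computation as in the proof of Proposition~\ref{prop 1.1}(a). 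Hence $\rho_{k,K} = \E^\omega[\varrho_k^\M \mid \mathcal{H}_K]$, i.e. $(\rho_{k,K})_{K\ge 0}$ is a reverse martingale. Applying Doob's $L^2$ maximal inequality to the finite forward martingale $(\rho_{k,N},\rho_{k,N-1},\dots,\rho_{k,0})$ adapted to $\mathcal{H}_N \subseteq \cdots \subseteq \mathcal{H}_0$, and then letting $N \to \infty$ by monotone convergence, yields $\E^\omega \sup_{K\ge 0}\rho_{k,K}^2 \le 4\,\E^\omega(\varrho_k^\M)^2 \le 4C_\omega$, as required. (An equivalent route avoiding reverse martingales: write $\rho_{k,K} = \varrho_k^\M - \xi^\M_{k,K-1}$, where $\xi^\M_{k,n} := \sum_{j=0}^n (\Gamma_j^\M)^{-1/\alpha}\cos(2\pi(U_j+k\zeta_j))$ is an $L^2$-bounded forward martingale by the reasoning in Proposition~\ref{prop 1.1}(a); then $\sup_K \rho_{k,K}^2 \le 2(\varrho_k^\M)^2 + 2\sup_n (\xi^\M_{k,n})^2$, and Doob's $L^2$ inequality gives $\E^\omega\sup_n(\xi^\M_{k,n})^2 \le 4\sup_n\E^\omega(\xi^\M_{k,n})^2 \le 4C_\omega$.)

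The only step requiring a little care is legitimizing the maximal inequality over the infinite index set $\{K \ge 0\}$, which is handled as above by first working on $\{0,1,\dots,N\}$ and then passing to the limit via monotone convergence. Everything else — the mean-zero cancellation over the $U_j$'s and the trivial bound $\cos^2 \le 1$ — is routine, and I do not anticipate a genuine obstacle here.
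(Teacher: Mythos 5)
Your argument is correct and considerably more elementary than the one in the paper. You observe that for fixed $k$, the family $\mathcal{H}_K = \sigma(U_j : j \ge K)$ is decreasing, each summand with index $j<K$ has conditional mean zero given $\mathcal{H}_K$ (by independence of the $U_j$ and $\E^\omega\cos(2\pi(U_j+k\zeta_j))=0$), so $(\rho_{k,K})_{K\ge 0}$ is (up to a factor of $2$) the reverse martingale $\E^\omega[\varrho_k^\M\mid\mathcal{H}_K]$; Doob's $L^2$ maximal inequality, applied to the finite forward rearrangement and combined with monotone convergence and the $L^2$ bound $\E^\omega\rho_{k,0}^2 \le \sum_j\Gamma_j^{-2/\alpha}=:C_\omega$ from Proposition~\ref{prop 1.1}(a), gives $\E^\omega\sup_K\rho_{k,K}^2 \le 4C_\omega$ uniformly in $k$, and Tonelli finishes. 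The paper instead conditions on the absolute values $\theta_{k,j}=|\cos(2\pi(U_j+k\zeta_j))|$ and treats $\rho_{k,K}$ as the supremum of a Rademacher process in the signs $\eta_{k,j}$, controlling its mean with Dudley's entropy integral and its fluctuations with Talagrand's concentration inequality; the uniform-in-$k$ bound is then obtained from these two estimates. Your martingale route buys a much lighter toolbox (Doob's inequality and monotone convergence) and a cleaner one-step bound, whereas the chaining-plus-concentration argument is overkill for a pure $L^2$ statement but would be the natural route if one instead wanted tail bounds on $\sup_K|\rho_{k,K}|$ rather than a second moment. The only slip is a harmless constant: with the paper's normalization $\varrho_k^\M = 2\sum_j(\Gamma_j^\M)^{-1/\alpha}\cos(\cdot)$ one has $\rho_{k,K}=\tfrac12\E^\omega[\varrho_k^\M\mid\mathcal{H}_K]$, and $\E^\omega(\varrho_k^\M)^2\le 4C_\omega$ rather than $C_\omega$; neither affects the conclusion.
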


\begin{proof}
We write $$\rho_{k,K}  = \sum_{j=K}^\infty (\Gamma_j^\M)^{-1/\alpha} \eta_{k,j} \theta_{k,j},$$ where $\eta_{k,j}: = \sgn(\cos(2\pi (U_j + k\zeta_j)))$ and $\theta_{k,j}: = |\cos(2\pi(U_j + k\zeta_j))|$. Note that for a fixed $k$,  $\eta_k =(\eta_{k,j})_{ j\geq 0}$ are i.i.d.\ Rademacher variables, independent of $\theta_k = (\theta_{k,j})_{j\geq 0}$.



Denoting by $\E_{\eta_k}$, the expectation with respect to $\eta_k$ only, it suffices to show that
\begin{align} \label{lem 5.5 eqn 2}
\sup_{k\in \Z}\E_{\eta_k} \sup_{K\geq 0} \rho_{k,K}^2 < C(\omega),
\end{align}
 for some finite constant $C(\omega)$, independent of $U, k,$ and $K$,
since this would imply that \begin{align*}
\E^\omega \sum_{k\in \Z} \frac{1}{1+k^2} \sup_{K \geq 0}\rho_{k,K}^2 < \infty
\end{align*}
from which the statement of the lemma follows.

We now proceed to prove \eqref{lem 5.5 eqn 2}. The proof will be split in two steps; in the first step, we show that
\begin{align}\label{lem 5.5 eqn 3}
\sup_{k\in \Z}\E_{\eta_k} \sup_{K\geq 0}|\rho_{k,K}| < C_1(\omega).
\end{align}
The second step involves showing the concentration of $\sup_{K\geq 0}|\rho_{k,K}|$ around its mean:
\begin{align}\label{lem 5.5 eqn 4}
\sup_{k \in \Z} \E_{\eta_k} \Bigl(\sup_{K \geq 0} |\rho_{k,K}| - \E_{\eta_k} \sup_{K\geq 0}|\rho_{k,K}| \Bigr)^2 < C_2(\omega).
\end{align}
\eqref{lem 5.5 eqn 2} follows from \eqref{lem 5.5 eqn 3} and \eqref{lem 5.5 eqn 4}. We fix $k \in \Z$ and prove the bounds \eqref{lem 5.5 eqn 3} and \eqref{lem 5.5 eqn 4} uniformly in $k$. For notational simplicity, we will write $\E_{\eta}$ instead of $\E_{\eta_k}$.

\begin{itemize}
\item \textit{Proof of \eqref{lem 5.5 eqn 3}}: It is useful to view $\sup_{K \geq 0} |\rho_{k,K}|$ as the supremum of a Rademacher process, where we treat $(\Gamma_j^\M)_{j \ge 0}$ and $(\theta_{k, j})_{j \ge 0}$ as deterministic sequences. Indeed, 
let $\A  =  \A^+ \cup \A^-$, where \begin{align*}
\A^+ = \A_k^+ := \bigcup_{K\geq 0}\Big\{ \sum_{j=K}^\infty (\Gamma_j^\M)^{-1/\alpha} \theta_{k,j}e_j\Big\} \subset \lz, \quad \text{ and } \A^-: = -\A^+.
\end{align*} 
  Then $(\langle v, \eta_k \rangle)_{v\in \A}$ is a Rademacher process and $\sup_{K \geq 0} |\rho_{k,K}| = \sup_{v \in \A} \langle v, \eta_k \rangle.$ Note that   $(\langle v, \eta_k \rangle)_{v\in \A}$ is a  mean-zero  subgaussian process with respect to the $\ell^2$-norm on $\A$. Therefore, by Dudley's integral inequality (see, for example, \cite[Theorem 8.1.3]{Ver}), there exists some absolute constant $C>0$ such that 
\begin{align*}
	\E_\eta \sup_{K \geq 0} |\rho_{k,K}| & = \E_\eta \sup_{v\in \A} \langle v, \eta_k \rangle \leq C\int_0^{\text{diam}{(\A)}} \sqrt{\log \mathcal{N}(\A, \|\cdot\|_2, \varepsilon)} d\varepsilon,
\end{align*}
where $\mathcal{N}(\A, \|\cdot\|_2, \varepsilon)$ is the covering number of $\A$, i.e.,  is the smallest number of  $\ell^2$-balls of radius $\varepsilon$ required to cover $\A$. Since $|\theta_{k,j}|\leq 1$, we have that $$\text{diam}(\A) = \sup_{v\in \A}\|v\|_2 \leq \Bigl(\sum_{j=0}^\infty (\Gamma_j^\M)^{-2/\alpha}\Bigr)^{1/2} \leq  \Bigl(\sum_{j=0}^\infty \Gamma_j^{-2/\alpha}\Bigr)^{1/2} =: C'(\omega)< \infty.$$ 
Also, as $\Gamma_j/j \to 1$, for all $K$ sufficiently large,
\begin{align*}
\sum_{j=K}^\infty \Gamma_j^{-2/\alpha} \leq 2 \sum_{j=K}^\infty j^{-2/\alpha} \leq 3 K^{1 - 2/\alpha}.
\end{align*}
Take $K_0$ sufficiently large such that $3K_0^{1 - 2/\alpha} \le \varepsilon$, i.e., $K_0 > (\varepsilon/3)^{-\alpha/(2 - \alpha)} $. Then 
the subset $\A \cap \mathrm{span}\{e_j: j \ge K_0 \}$ can be covered by the $\ell^2$-ball of radius $\varepsilon$ centered at the origin, and each of the remaining  $2K_0$ points of $\A $ can be covered trivially by one  $\ell^2$-ball of radius $\varepsilon$. Hence, we obtain 
\[ \mathcal{N}(\A, \|\cdot\|_2, \varepsilon) \leq C\varepsilon^{-\alpha/(2-\alpha)}, \]
for some constant $C$ and thus, 
\begin{align*}
\E_\eta \sup_{K \geq 0} |\rho_{k,K}| \leq C \int_0^{C'(\omega)} \sqrt{|\log \varepsilon|} d\varepsilon,
\end{align*}
which is a finite quantity that does not depend on $k$ and hence, \eqref{lem 5.5 eqn 3} follows.

\item \textit{Proof of \eqref{lem 5.5 eqn 4}}:
     We note that $\sup_{v\in \A} \langle v, \cdot \rangle$ is a convex Lipschitz function, with Lipschitz constant $\text{diam}(\A)$. Thus,  by Talagrand's concentration inequality (\cite[Theorem 5.2.16]{Ver}),  for any $t>0$  
     \begin{align*}
\P_\eta \Bigl(\Big| \sup_{v\in \A} \langle v, \eta_k \rangle - \E_\eta \sup_{v\in \A} \langle v, \eta_k \rangle \Big| > t\Bigr) & \leq C \exp\Bigl(-\frac{t^2}{C \text{diam}(\A)^2}\Bigr),
\end{align*}
for some universal constant $C>0$, and hence \begin{align*}
\E_\eta \Bigl( \sup_{v\in \A} \langle v, \eta_k \rangle - \E_\eta \sup_{v\in \A} \langle v, \eta_k \rangle \Bigr)^2 \leq C \text{diam}(\A) \leq C'(\omega).
\end{align*}
Since the above bound is uniform over $k$, \eqref{lem 5.5 eqn 4} follows by taking supremum over $k$.
\end{itemize}
\end{proof}

To establish the convergence of $\tD$ to $\Delta$ on $\cC$, we shall need the following estimate on $\|\Pi^\L\|_\op$.

 \begin{lemma}\label{lem pi_L bounded}
$\|\Pi^{\L}\|_\op \leq C \log L$ for some constant $C>0$ independent of $L$.
\end{lemma}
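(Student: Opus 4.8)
The plan is to mirror the proof of Lemma~\ref{lem 2.3}, since $\Pi^\L$ is the $\ell^2(\Z)$ analogue of $P^\L$: it is a band-truncated Toeplitz operator whose nonzero entries, by \eqref{eqn 1.3}, are $\Pi^\L(k,k) = 1/2$ and $\Pi^\L(k,l) = -i/(\pi(k-l))$ whenever $|k-l| \le L$ is odd. First I would bound the maximal absolute row sum: for any fixed $k \in \Z$,
\begin{align*}
\sum_{l \in \Z} |\Pi^\L(k,l)| = \frac12 + \frac{2}{\pi} \sum_{\substack{1 \le j \le L \\ j \text{ odd}}} \frac1j \le \frac12 + \frac{2}{\pi}\sum_{j=1}^L \frac1j \le C \log L
\end{align*}
for an absolute constant $C$. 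By the Toeplitz structure this bound is uniform in $k$, so $\|\Pi^\L\|_{\infty\to\infty} := \sup_k \sum_l |\Pi^\L(k,l)| \le C\log L$. Since the entrywise absolute-value matrix $(|\Pi^\L(k,l)|)_{k,l}$ is symmetric (indeed $|\Pi(k,l)| = |\Pi(l,k)|$), the maximal absolute column sum $\|\Pi^\L\|_{1\to1}$ equals $\|\Pi^\L\|_{\infty\to\infty}$ and is likewise at most $C\log L$.

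Then I would invoke the Schur test (equivalently, Riesz--Thorin interpolation between the $\ell^1\to\ell^1$ and $\ell^\infty\to\ell^\infty$ bounds), in its form valid for infinite matrices, to conclude
\begin{align*}
\|\Pi^\L\|_\op \le \sqrt{\|\Pi^\L\|_{1\to1}\,\|\Pi^\L\|_{\infty\to\infty}} \le C\log L.
\end{align*}
This also confirms that $\Pi^\L$ is a bounded operator on $\ell^2(\Z)$, as claimed in the text surrounding its definition.

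There is no serious obstacle here; the only point deserving a word of care is that $\Pi^\L$ acts on the infinite-dimensional space $\ell^2(\Z)$ rather than on $\C^{2N}$, so one should either cite the Schur-test bound in its infinite-dimensional formulation or, alternatively, verify the estimate directly by applying Cauchy--Schwarz to $\langle v,\Pi^\L w\rangle = \sum_{k,l}\overline{v_k}\,\Pi^\L(k,l)\,w_l$ after splitting $|\Pi^\L(k,l)| = |\Pi^\L(k,l)|^{1/2}\cdot|\Pi^\L(k,l)|^{1/2}$ and using the uniform row/column sum bounds. Either route is entirely routine.
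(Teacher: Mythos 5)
Your proof is correct and is essentially the paper's own argument: the paper simply states that the proof is analogous to Lemma~\ref{lem 2.3}, which is exactly the row-sum bound $\sum_l |\Pi^\L(k,l)| \le C\log L$ followed by the Schur/H\"older bound $\|\Pi^\L\|_\op \le \sqrt{\|\Pi^\L\|_{1\to1}\|\Pi^\L\|_{\infty\to\infty}}$. Your remark about verifying the Schur test in the infinite-dimensional setting is a reasonable point of care but poses no difficulty.
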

\begin{proof}
The proof follows by using arguments analogous to Lemma \ref{lem 2.3}. 
\end{proof}

\begin{lemma}\label{lem strong_conv}
$\tD v\to \Delta v$ for all $v \in \cC$.
\end{lemma}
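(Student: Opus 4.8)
The plan is to reduce to basis vectors and then split $\tD e_l-\Delta e_l$ into a ``diagonal-truncation'' part, which I can control via Lemma~\ref{lem key_estimate} and dominated convergence, and a ``projection-truncation'' part $(\Pi^\L-\Pi)w_l^\MK$, which is the delicate one: $\Pi^\L$ does \emph{not} converge to $\Pi$ in any operator topology (its norm grows like $\log L$ by Lemma~\ref{lem pi_L bounded}, so Banach--Steinhaus rules out even strong convergence), and the only way through is to exploit that the vector $\Pi^\L$ is applied to is supported in a band of width $O(L)$.

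\emph{Reduction and decomposition.} Since $\cC$ is the linear span of the $e_l$ and $\tD,\Delta$ are linear, it suffices to prove $\|\tD e_l-\Delta e_l\|_2\to 0$ as $L\to\infty$ (under \eqref{eq:trunc_assump}, i.e.\ $K=M=L^{1/9}$) for each fixed $l$. For $\omega\in\Omega_1$ one has $\Gamma_0>0$ (an arrival time of a Poisson process is a.s.\ positive) and $\Gamma_j\to\infty$, so for all $L$ large enough, depending on $\omega$, $M=L^{1/9}\ge\Gamma_0^{-1/\alpha}$, hence $\Gamma_j^\M=\Gamma_j$ for every $j$; consequently $\varrho_k^\M=\varrho_k$ and $\varrho_k-\varrho_k^\MK=2\rho_{k,K}$, with $\rho_{k,K}$ as in Lemma~\ref{lem key_estimate}. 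Writing $w_l:=\Lambda\Pi e_l\in\lz$ (Proposition~\ref{prop 1.1}(a)) and $w_l^\MK:=\Lambda^\MK\Pi^\L e_l$ (finitely supported), so that $\Delta e_l=\Pi w_l$ and $\tD e_l=\Pi^\L w_l^\MK$, I would use
\[
\tD e_l-\Delta e_l=\Pi\bigl(w_l^\MK-w_l\bigr)+\bigl(\Pi^\L-\Pi\bigr)w_l^\MK .
\]
Keeping $\Pi$ (and not $\Pi^\L$) in front of the first difference is the point: it contributes only $\|\Pi\|_\op=1$ there and avoids an unwanted $(\log L)^2$.

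\emph{First term.} Here $\|\Pi(w_l^\MK-w_l)\|_2\le\|w_l^\MK-w_l\|_2$, and since $(w_l)_k=\varrho_k\Pi(k,l)$ while $(w_l^\MK)_k=\varrho_k^\MK\Pi(k,l)\1_{\{|k-l|\le L\}}$,
\[
\|w_l^\MK-w_l\|_2^2=4\sum_{|k-l|\le L}\rho_{k,K}^2\,|\Pi(k,l)|^2+\sum_{|k-l|>L}\varrho_k^2\,|\Pi(k,l)|^2 .
\]
Using $|\Pi(k,l)|^2\le C_l/(1+k^2)$ from \eqref{eqn 1.3}, the first sum is dominated by $4C_l\sum_k\sup_{K'\ge 0}\rho_{k,K'}^2/(1+k^2)$, which is $\P^\omega$-a.s.\ finite by Lemma~\ref{lem key_estimate}; since $\rho_{k,K}\to0$ as $K\to\infty$ for each $k$ (a tail of a $\P^\omega$-a.s.\ convergent series), dominated convergence makes it vanish. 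The second sum is a tail of $\sum_k\varrho_k^2|\Pi(k,l)|^2<\infty$ (finite by Proposition~\ref{prop 1.1}(a)), hence $\to0$ as $L\to\infty$.

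\emph{Second term (the main obstacle).} With $R^\L:=\Pi-\Pi^\L$ we have $R^\L(k,m)=\Pi(k,m)\1_{\{|k-m|>L\}}$ and $|R^\L(k,m)|\le(\pi|k-m|)^{-1}\1_{\{|k-m|>L\}}$. Since $(w_l^\MK)_m=0$ unless $|m-l|\le L$, only indices $m$ with $|k-m|\ge\max(L+1,\,|k-l|-L)\ge\tfrac13(L+|k-l|)$ contribute to $(R^\L w_l^\MK)_k$, so $|(R^\L w_l^\MK)_k|\le 3\bigl(\pi(L+|k-l|)\bigr)^{-1}\|w_l^\MK\|_{\ell^1}$. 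Bounding $\|w_l^\MK\|_{\ell^1}\le 2MK\sum_{|m-l|\le L}|\Pi(m,l)|\le C\,MK\log L$ by the same row-sum estimate that proves Lemma~\ref{lem pi_L bounded}, one gets $|(R^\L w_l^\MK)_k|\le C\,MK(\log L)/(L+|k-l|)$, and therefore
\[
\bigl\|(\Pi^\L-\Pi)w_l^\MK\bigr\|_2^2\le C\,M^2K^2(\log L)^2\sum_{k\in\Z}\frac{1}{(L+|k-l|)^2}\le \frac{C\,M^2K^2(\log L)^2}{L},
\]
which with $M=K=L^{1/9}$ equals $C(\log L)^2L^{-5/9}\to0$. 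Adding the two bounds gives $\|\tD e_l-\Delta e_l\|_2\to0$, proving the lemma. The only genuinely non-routine step is the second term: one cannot appeal to any convergence of $\Pi^\L$ itself, and must instead use the bandedness of $w_l^\MK$ to extract the decay $1/(L+|k-l|)$; everything else is bookkeeping with the decay of $\Pi$ and the estimates already proved.
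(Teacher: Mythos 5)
Your proof is correct and follows the same overall strategy as the paper's---reduce to basis vectors, split the error into a diagonal-truncation piece and a projection-truncation piece, and exploit the bandedness of the intermediate vector to control the latter---but you execute it with a leaner decomposition and obtain sharper bounds on one of the pieces. The paper splits $\|\Delta e_0 - \tD e_0\|^2$ into four terms (see \eqref{lem 5.6 eqn 1}--\eqref{lem 5.6 eqn 2}), treating the $M$-, $K$-, left-$L$- and right-$L$-truncations separately, whereas you fold the $M$-truncation into the observation that $\Gamma_j^\M=\Gamma_j$ for all $j$ once $M>\Gamma_0^{-1/\alpha}$ and group the $K$-truncation with the right-$L$-truncation inside $\Pi(w_l^\MK-w_l)$. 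This is a genuine improvement at one spot: the paper's fourth term $\|\Pi^\L\Lambda^\MK(\Pi-\Pi^\L)e_0\|^2$ is bounded via $\|\Pi^\L\|_\op^2\|\Lambda^\MK\|_\op^2\|(\Pi-\Pi^\L)e_0\|_2^2 \le C(\log L)^2 M^2K^2/L$, which only vanishes because of the scaling \eqref{eq:trunc_assump}; your corresponding contribution $\sum_{|k-l|>L}\varrho_k^2|\Pi(k,l)|^2$ is simply the tail of the convergent series from Proposition~\ref{prop 1.1}(a) and vanishes unconditionally. For the main term $(\Pi^\L-\Pi)\Lambda^\MK\Pi^\L e_l$ (the paper's third term, with $\Pi$ inside rather than $\Pi^\L$), the paper splits the vector $v=\Lambda^\MK\Pi e_0$ at width $\lfloor\sqrt{L}\rfloor$ into a low and high part and uses $\|\cdot\|_1$ and operator-norm bounds to get $O((\log L)^2L^{-1/18})$; you instead use the support of $w_l^\MK$ in $\{|m-l|\le L\}$ and the off-band decay of $\Pi$ directly to get the pointwise bound $|(R^\L w_l^\MK)_k|\lesssim MK(\log L)/(L+|k-l|)$, giving $O((\log L)^2 L^{-5/9})$, which is tighter. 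Both approaches hinge on the same idea---one cannot use strong convergence of $\Pi^\L$, so the bandedness of what $\Pi^\L-\Pi$ is applied to must be exploited---and both yield the result; yours is a bit more streamlined.
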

\begin{proof}
It suffices to show that for all $k \in \Z$, $\tD e_k\to \Delta e_k$. For simplicity, we demonstrate the proof for $k=0$. The argument for other values of $k$ is similar. We have 
\begin{align} 
	\|\Delta e_0  - \tD e_0   \|^2 & \leq 2\|\Pi\Lambda\Pi e_0 -  \Pi\Lambda^\MK\Pi e_0\|^2 + 2\|\Pi\Lambda^\MK\Pi e_0  - \Pi^\L\Lambda^\MK\Pi^\L e_0\|^2 \label{lem 5.6 eqn 0}\\
	& \leq  2\|(\Lambda-\Lambda^\MK)\Pi e_0\|^2 + 2\|\Pi\Lambda^\MK\Pi e_0  - \Pi^\L\Lambda^\MK\Pi^\L e_0\|^2 \notag\\
	& \leq 4\|(\Lambda - \Lambda^\M)\Pi e_0\|^2 + 4\|(\Lambda^\M - \Lambda^\MK)\Pi e_0\|^2 \label{lem 5.6 eqn 1}\\
	& \qquad + 4\|(\Pi - \Pi^\L)\Lambda^\MK\Pi e_0\|^2 + 4\|\Pi^\L\Lambda^\MK(\Pi-\Pi^\L)e_0\|^2, \label{lem 5.6 eqn 2}
\end{align}
where we used the fact that $\|\Pi\|_\op = 1$ in the second step. We claim that each of the terms in \eqref{lem 5.6 eqn 1} and \eqref{lem 5.6 eqn 2} vanishes as we take $L \to \infty$.

\begin{itemize}
\item \textit{First term in \eqref{lem 5.6 eqn 1}}: We have 
\begin{align*}
\|(\Lambda - \Lambda^\M)\Pi e_0\|^2 & = \sum_{k \in \Z} |(\Lambda - \Lambda^\M)(k,k) \Pi(k,0)|^2 \\
	& \leq C\sum_{k\in \Z}\frac{1}{1 + k^2}\Bigl[\sum_{j=0}^\infty(\Gamma_j^{-1/\alpha}-M) \1_{\{\Gamma_j \leq M^{-\alpha}\}} \cos(2\pi(U_j + k \zeta_j))\Bigr]^2\\
        & \leq C\sum_{k\in \Z}\frac{1}{1 + k^2} \Bigl[\sum_{j=0}^\infty \Gamma_j^{-1/\alpha} \1_{\{\Gamma_j \leq M^{-\alpha}\}}\Bigr]^2 \\
        & \leq C\sum_{k\in \Z}\frac{1}{1 + k^2} \cdot \sum_{j=0}^\infty \Gamma_j^{-2/\alpha} \cdot \sum_{j=0}^\infty \1_{\{\Gamma_j \leq M^{-\alpha}\}},
\end{align*}

where the final step follows from Cauchy-Schwarz. The claim follows since both $\sum_{k\in \Z}\frac{1}{1 + k^2}$ and  $\sum_{j=0}^\infty \Gamma_j^{-2/\alpha}$ are finite, and that $ \sum_{j=0}^\infty \1_{\{\Gamma_j \leq M^{-\alpha}\}}  = 0$  for all $M > \Gamma_0^{-1/\alpha}$. 

\item \textit{Second term in \eqref{lem 5.6 eqn 1}}: We write 
\begin{align} \label{lem 5.6 eqn 3}
	\|(\Lambda^\M - \Lambda^\MK)\Pi e_0\|^2 & = \sum_{k\in \Z} \Big|\sum_{j=K}^\infty (\Gamma_j^\M)^{-1/\alpha} \cos (2\pi(U_j + k\zeta_j)) \Pi(k,0)\Big|^2 \leq C\sum_{k\in \Z}\frac{1}{1+k^2} \rho_{k,K}^2,
\end{align} 
where we borrowed the notation $\rho_{k,K}$ from Lemma \ref{lem key_estimate}. Note that for each $k$, almost surely, $\rho_{k,K} \to 0$ as $K \to \infty$. Indeed, for $K=M$ sufficiently large,  $\Gamma_j^\M = \Gamma_j$, so we have \begin{align*}
\rho_{k,K} = \sum_{j=K}^\infty \Gamma_j^{-1/\alpha} \cos(2\pi(U_j + k\zeta_j)),
\end{align*}
which converges to $0$ as $K \to \infty$ as it is the tail of a  convergent series by Proposition \ref{prop 1.1}(a). Now applying the dominated convergence theorem on \eqref{lem 5.6 eqn 3} with respect to the measure $\sum_{k \in \Z}(1+k^2)^{-1}\delta_k$ on $\Z$, and noting that dominating condition is satisfied owing to Lemma~\ref{lem key_estimate}, we conclude \eqref{lem 5.6 eqn 3} vanishes as $K \to \infty$.

\item \textit{First term in \eqref{lem 5.6 eqn 2}}: We have that $\max_{k\in \Z} |\varrho_k^\MK| \leq 2MK$. For brevity, we let $v = \Lambda^\MK\Pi e_0$. Thus, for some universal constant $C>0$ and any $k\in\Z$,\begin{align*}
	|v_k| & = |\Lambda^\MK(k,k)\Pi(k,0)| \leq \frac{CMK}{1+|k|}.
\end{align*}
Let $w = \chi_{\lfloor\sqrt{L}\rfloor} \circ v$, where $\chi_n$ is as defined in \eqref{eq: def_chi}. Then 
\begin{align*}
	\|v-w\|_2^2 & = \sum_{|k|> \lfloor\sqrt{L}\rfloor}|v_k|^2 \leq \frac{CM^2K^2}{\sqrt{L}}.
\end{align*}
By Cauchy-Schwarz, we have 
\begin{align*}
	\|w\|_1^2 & \leq \sqrt{L}\|v\|_2^2 \leq C\sqrt{L}M^2K^2.
\end{align*}
From Lemma \ref{lem 2.3} we obtain $\|\Pi - \Pi^\L\|_\op \leq C \log L$. Owing to the Toeplitz structure of $\Pi$ and $\Pi^\L$, we have that for any $j \in \Z$, \begin{align} \label{lem 5.6 eqn 5}
\|(\Pi - \Pi^\L) e_j\|_2^2 = \|(\Pi - \Pi^\L)e_0\|_2^2 = \sum_{|k| >L} |\Pi(k,0)|^2 \leq \frac{C}{L}.
\end{align}
Hence, \begin{align*}
	\|(\Pi - \Pi^\L)v\|_2^2 & \leq 2\|(\Pi-\Pi^\L)w\|^2 + 2\|(\Pi-\Pi^\L) (v-w)\|^2\\
	& \leq 2\Big\|\sum_{|j|\leq \sqrt{L}} w_j (\Pi - \Pi^\L) e_j\Big\|_2^2 + 2\|\Pi-\Pi^\L\|_\op^2 \|v-w\|_2^2\\
	& \leq 2\Bigl(\sum_{|j|\leq \sqrt{L}}|w_j|\|(\Pi - \Pi^\L)e_j\|_2\Bigr)^2 + C\frac{(\log L)^2 M^2K^2}{\sqrt{L}}\\
	& \leq \frac{C}{L}\|w\|_1^2 + C\frac{(\log L)^2 M^2K^2}{\sqrt{L}} \leq C\frac{M^2K^2 (\log L)^2}{\sqrt{L}}= C (\log L)^2L^{-1/18} \to 0
\end{align*}
as $L \to \infty$.
\item \textit{Second term in \eqref{lem 5.6 eqn 2}}: From Lemma \ref{lem pi_L bounded} and \eqref{lem 5.6 eqn 5}, we have \begin{align*}
\|\Pi^\L\Lambda^\MK(\Pi-\Pi^\L)e_0\|_2^2 & \leq \|\Pi^\L\|_\op^2 \|\Lambda^\MK\|_\op^2 \|(\Pi-\Pi^\L)e_0\|_2^2 \leq C \frac{(\log L)^2 M^2K^2}{L} = C(\log L)^2 L^{-5/9}
\end{align*}
which approaches 0 as $L \to \infty$.
\end{itemize}
Since all terms in \eqref{lem 5.6 eqn 1} and \eqref{lem 5.6 eqn 2} vanish in the limit $L\to \infty$, the proof is complete.
\end{proof}

\subsection{Connecting the spectral measures of \texorpdfstring{$\Delta$}{Delta} at \texorpdfstring{$e_0$}{e0} and at \texorpdfstring{$\sqrt{2}\Pi e_0 $}{sqrt(2) Pi e0}}

One easily observes that $\|\Pi e_0\|_2 = 1/2$, so $u = \sqrt{2}\Pi e_0$ is a vector of unit norm. In the following lemma, we relate the probability measures $\nu_\Delta = \nu_{\Delta, e_0}$ and $\nu_{\Delta,u}$. This paves the way to provide a simple description of the weak limit of the empirical spectral distribution of $T_N$ in Theorem \ref{main_thm}.

\begin{lemma}\label{lem spec_measure_at_u}
    $\P^\omega$-almost surely, $2\nu_\Delta - \delta_0 = \nu_{\Delta, u}$.
\end{lemma}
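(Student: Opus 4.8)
The plan is to compare the Stieltjes transforms of $\nu_\Delta = \nu_{\Delta,e_0}$ and $\nu_{\Delta,u}$, exploiting that $\Pi$ is an orthogonal projection commuting with $\Delta$. Work on the $\bP^\omega$-a.s.\ event where $\Delta$ is self-adjoint on the domain $\D$ of Proposition~\ref{prop 1.1}(b), and write $R(z) = (\Delta - z)^{-1}$ for $z \in \C\setminus\R$. Decompose $e_0 = p + q$ with $p := \Pi e_0$ and $q := (I-\Pi)e_0$; since $\Pi^* = \Pi = \Pi^2$ we have $p\perp q$, $\|p\|_2^2 = \langle e_0,\Pi e_0\rangle = \Pi(0,0) = 1/2$ and hence $\|q\|_2^2 = 1/2$, and moreover $u = \sqrt 2\,\Pi e_0 = \sqrt 2\, p$. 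Two structural facts drive the argument. First, $q \in \ker\Delta$: since $\Pi q = (\Pi - \Pi^2)e_0 = 0$, the vector $q$ lies in $\D$ with $\Delta q = \Pi\Lambda\Pi q = 0$, so $R(z)q = -z^{-1}q$. Second, $\operatorname{Ran}\Pi$ is $R(z)$-invariant: for $w \in \lz$, the vector $v := R(z)\Pi w$ lies in $\D$ and satisfies $zv = \Delta v - \Pi w = \Pi(\Lambda\Pi v - w) \in \operatorname{Ran}\Pi$, whence $v \in \operatorname{Ran}\Pi$ (as $z \ne 0$); in particular $R(z)p \in \operatorname{Ran}\Pi$, so $\langle q, R(z)p\rangle = 0$ because $q \in \ker\Pi = (\operatorname{Ran}\Pi)^\perp$.

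Given these, a short expansion finishes it. Using $R(z)q = -z^{-1}q$, $\langle q,R(z)p\rangle = 0$, $\langle p,q\rangle = 0$ and $\|q\|_2^2 = 1/2$,
\begin{align*}
\langle e_0, R(z)e_0\rangle
&= \langle p+q,\, R(z)p + R(z)q\rangle \\
&= \langle p, R(z)p\rangle - \tfrac1z\langle p,q\rangle - \tfrac1z\langle q,q\rangle + \langle q, R(z)p\rangle = \langle p, R(z)p\rangle - \tfrac1{2z}.
\end{align*}
Since $\langle u, R(z)u\rangle = 2\langle p, R(z)p\rangle$, this rearranges to $\langle u, R(z)u\rangle = 2\langle e_0, R(z)e_0\rangle + z^{-1}$, i.e.\ (recalling $z^{-1} = -\int(x-z)^{-1}\,\delta_0(dx)$)
\[
\int\frac{d\nu_{\Delta,u}(x)}{x-z} = \int\frac{d(2\nu_\Delta - \delta_0)(x)}{x-z},\qquad z\in\C\setminus\R.
\]
Both $\nu_{\Delta,u}$ and $2\nu_\Delta - \delta_0$ are finite signed measures, so the Stieltjes inversion formula yields $\nu_{\Delta,u} = 2\nu_\Delta - \delta_0$, $\bP^\omega$-a.s. (One can check separately that $\nu_\Delta(\{0\}) = \|E_\Delta(\{0\})e_0\|_2^2 \ge \|q\|_2^2 = 1/2$, since $E_\Delta(\{0\})q = q$ and $E_\Delta(\{0\})p\perp q$, so the right-hand side is indeed a probability measure.)

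The main thing to be careful about — mild, but genuine because $\Delta$ is unbounded when $\alpha\in[1,2)$ — is the domain bookkeeping in the two structural facts: one must confirm that $q$ and $R(z)\Pi w$ actually lie in $\D = \{v\in\lz : \|\Delta v\|_2 < \infty\}$ before manipulating $\Delta$ on them, which is immediate from $\Delta q = 0$ and from $R(z)$ mapping $\lz$ into $\D$. Everything else is standard: spectral measures $\nu_{\Delta,v}$ are defined for all $v\in\lz$ with no domain restriction, and a finite signed measure on $\R$ is determined by its Stieltjes transform on $\C\setminus\R$. (Equivalently, one could phrase the two facts as: $\Pi$ commutes with $\Delta$, so $\operatorname{Ran}\Pi$ and $\ker\Pi$ reduce $\Delta$ with $\Delta|_{\ker\Pi} = 0$, and then $\nu_{\Delta,e_0} = \nu_{\Delta,p} + \nu_{\Delta,q} = \tfrac12\nu_{\Delta,u} + \tfrac12\delta_0$ by orthogonality of $p$ and $q$; I would keep the resolvent version to match the paper's roadmap.)
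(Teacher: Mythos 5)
Your proof is correct, and it takes a genuinely different route from the paper's. The paper proves the Stieltjes-transform identity
\[
2\bigl\langle e_0, (\Delta - zI)^{-1} e_0 \bigr\rangle + \tfrac{1}{z} = \bigl\langle u, (\Delta - zI)^{-1} u \bigr\rangle
\]
first for the truncated \emph{bounded} operator $\dD = \Pi\Lambda^\MK\Pi$ in the regime $|z| > MK$ by expanding the resolvent as a Neumann series and invoking $\Pi(I-\Pi) = 0$ term by term, then extends to all $z \in \C\setminus\R$ by analytic continuation, and finally passes to the limit using the strong resolvent convergence $\dD \to \Delta$ (which it already established for Proposition~\ref{prop 5.1}). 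You instead work directly with the resolvent of the unbounded $\Delta$: you observe that $q = (I-\Pi)e_0 \in \ker\Delta$ and that $\operatorname{Ran}\Pi$ reduces $\Delta$ (so is $R(z)$-invariant), which kills both cross terms $\langle p, R(z)q\rangle$ and $\langle q, R(z)p\rangle$, and the identity drops out of a one-line orthogonal expansion. Your approach is more direct and structurally transparent---it makes visible that the whole lemma is really just the block decomposition $\Delta = \Delta|_{\operatorname{Ran}\Pi} \oplus 0|_{\ker\Pi}$ in disguise, without needing the truncation apparatus at all. The paper's approach is more mechanical (Neumann series on a bounded operator) and piggybacks on infrastructure it has already built. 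Your domain checks are the right ones and are correctly handled: $q \in \D$ with $\Delta q = 0$ because $\Pi q = 0$ forces $\Lambda\Pi q = 0 \in \lz$, and $R(z)\Pi w \in \D$ automatically because the resolvent maps $\lz$ into the domain, whence $\Delta(R(z)\Pi w) \in \operatorname{Ran}\Pi$ and the invariance follows since $z \ne 0$.
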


\begin{proof}
We fix a realization such that $\Delta$  is self-adjoint.
   Note that it is equivalent to show that the corresponding Stieltjes transforms agree, that is, for all $z \in \C\setminus\R$ \begin{align} \label{stielt}
    2\big  \langle e_0,  ( \Delta  - zI)^{-1} e_0  \big \rangle + \frac{1}{z} &= \big \langle u,  ( \Delta  - zI)^{-1} u   \big \rangle.
\end{align}
We prove \eqref{stielt} by showing it first for the truncated operator $\Pi\Lambda^\MK\Pi$, which we denote by $\dD$ for simplicity. For $z \not \in \R$ with $|z| > MK \ge \|\Lambda^\MK\|_\op \geq \|\dD\|_\op$, (note that $\|\Pi\|_\op = 1$), the following power series is convergent
\begin{align*}
    (\dD - zI)^{-1} = -\sum_{k=0}^\infty \frac{(\dD)^k}{z^{k+1}},
\end{align*}
where we use the convention that $(\dD)^0 = I$. Noting that $\Pi(I-\Pi) = (I- \Pi)\Pi=0$,  this yields 
 \begin{align*}
\big \langle \Pi e_0,  ( \dD  - zI)^{-1} (I -\Pi)e_0  \big \rangle = \big \langle (I- \Pi) e_0,  ( \dD  - zI)^{-1} \Pi e_0  \big \rangle = 0,
  \end{align*}
  and
   \begin{align*}
\big \langle (I- \Pi) e_0,  ( \dD - zI)^{-1} (I -\Pi)e_0  \big \rangle = -  \frac{ \| (I - \Pi) e_0 \|_2^2}{z} =  -  \frac{1}{2 z}. 
  \end{align*}
Thus,
   \begin{align}
2\big  \langle e_0,  ( \dD - zI)^{-1} e_0  \big \rangle + \frac{1}{z} &=  2 \big \langle \Pi e_0,  ( \dD - zI)^{-1} \Pi e_0  \big \rangle  - 2\cdot\frac{1}{2z} + \frac{1}{z} =  \big \langle u,  ( \dD - zI)^{-1} u   \big \rangle. \label{stielt_bounded}
  \end{align}
By analytic continuation, \eqref{stielt_bounded} holds true for all $z \in \C\setminus \R$. Thus, \eqref{stielt} follows from \eqref{stielt_bounded} if we can show that $\dD \to  \Delta$ in the strong resolvent sense as $M=K \to \infty$. It suffices to show that  $\dD v \to \Delta v$ for each $v \in \mathcal{C}$. Indeed, in Lemma \ref{lem strong_conv} we proved this for $ v = e_0$ by showing that the first term in \eqref{lem 5.6 eqn 0} vanishes, and remarked that a similar computation holds for $v = e_k$, $k\neq 0$, thereby extending the result to all vectors in $\mathcal{C}$. Consequently, thanks to \cite[Theorem VIII.25(a)]{Ree}, the convergence of $\dD$ to $\Delta$ holds in the strong resolvent sense. 
\end{proof}

\section{Properties of Limiting Spectral Distribution}\label{sec 6}
In this section, we will prove Theorem \ref{thm 1.3}.
Throughout all parts, we fix a realization of $\omega \in \Omega_0$.

\subsection{Proof of parts (a) and (c)}

We have 
\begin{align*}
\Bigl(\Gamma_j^{-1/\alpha}\cos(2\pi(U_j + k\zeta_j))\Bigr)_{j,k\in \Z} \ed \Bigl(-\Gamma_j^{-1/\alpha}\cos(2\pi(U_j + k\zeta_j))\Bigr)_{j,k\in \Z}
\end{align*}
and hence $\varrho_k \ed -\varrho_k$ for each $k \in \Z$. Thus, the probability measures $\E^\omega \nu_\Delta$ and $\E^\omega \nu_{-\Delta}$ are the same, and hence these distributions are symmetric around 0. It follows then that $\nu_\ct = 2\E^\omega \nu_\Delta -\delta_0$ also exhibits the same property, thereby concluding the proof of part (a).

In part (c), since we assume that $0<\alpha<1$, we have\begin{align*}
   \|\Lambda\|_\op = \sup_{k\in \Z}|\varrho_k| \leq 2\sum_{j=0}^\infty \Gamma_j^{-1/\alpha}< \infty.
\end{align*}
Since $\|\Delta\|_\op \leq \|\Lambda\|_\op$, it follows that $\Delta$ is a bounded operator. Hence,   $\sigma(\Delta)$, the spectrum of $\Delta$, is  contained in the interval $[-2\sum_{j=0}^\infty \Gamma_j^{-1/\alpha}, 2\sum_{j=0}^\infty \Gamma_j^{-1/\alpha}]$ that does not depend on $U$. Part (c) then follows as 
$\mathrm{supp}(\nu^\omega_\ct) \subseteq \sigma(\E^\omega \Delta)$.

\subsection{Proof of part (b)}

We begin by quoting a result regarding the limiting distribution of heavy-tailed circulant matrices. Recall the $2N \times 2N$ symmetric circulant matrix $G$ from \eqref{eq:matrix G}.

\begin{theorem}[{\cite[Theorem 3.3(b)]{Bos1} }]\label{circular_lim_dist} For $0 < \alpha <2$, 
    $\mu_G \Rrightarrow \nu_\cc$ where $\nu^\omega_\cc$ is the law of the random variable  $2\sum_{j=0}^\infty \Gamma_j^{-1/\alpha} \cos(2\pi U_j)$ conditioned on $\omega$. 
\end{theorem}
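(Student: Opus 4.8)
\emph{Proof idea for Theorem~\ref{circular_lim_dist}.} The plan is to exploit that, in contrast to the Toeplitz case, the eigenvalues of the circulant matrix $G = G_{2N}$ are explicit --- they are $d_0^\circ,\dots,d_{2N-1}^\circ$ with $d_k^\circ = b_0 + 2\sum_{j=1}^{N-1}b_j\cos(2\pi jk/(2N))$ --- so that $\mu_G = (2N)^{-1}\sum_{k=0}^{2N-1}\delta_{d_k^\circ}$. Introducing $\Theta_N$ uniform on $[2N]$ and independent of $(a_j)_{j\ge0}$, one has $\langle\mu_G,f\rangle = \E_{\Theta_N}[f(d_{\Theta_N}^\circ)]$ for every $f\in C_b(\R)$, where $\E_{\Theta_N}$ integrates over $\Theta_N$ only. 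Since weak convergence on the Polish space $\mathcal{M}$ can be tested against a countable family of bounded evaluation maps $\mu\mapsto\langle\mu,f\rangle$, it is standard that $\mu_G\Rrightarrow\nu_\cc$ is equivalent to the convergence of all finite-dimensional marginals $(\langle\mu_G,f_1\rangle,\dots,\langle\mu_G,f_m\rangle)\Rightarrow(\langle\nu_\cc,f_1\rangle,\dots,\langle\nu_\cc,f_m\rangle)$ in $\R^m$, for $f_1,\dots,f_m\in C_b(\R)$.

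First I would truncate the Fourier sum. Writing $d_{\Theta_N}^\circ = b_0 + 2\sum_{j=0}^{N-1}b_{(j)}\cos(2\pi\sigma_N(j)\Theta_N/(2N))$ via the order statistics $|b_{(0)}|\ge\dots\ge|b_{(N-1)}|$ and the permutation $\sigma_N$, fix $K$ and let $R_{N,K}$ be $b_0$ plus the first $K$ summands and $E_{N,K}$ the remaining tail. The Parseval computation behind \eqref{lem 4.5 eqn 4} gives $\E_{\Theta_N}[E_{N,K}^2]\le 2\sum_{j=K}^{N-1}b_{(j)}^2$. Using Lemma~\ref{lem 3.3} together with the standard fact that $c_N^{-2}\sum_{j=0}^{N-1}a_j^2\Rightarrow\sum_{j\ge0}\Gamma_j^{-2/\alpha}$ (valid since $\alpha<2$; as $2/\alpha>1$ the limit is a.s.\ finite), this tail converges in distribution to $\sum_{j\ge K}\Gamma_j^{-2/\alpha}$, which tends to $0$ a.s.\ as $K\to\infty$. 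As each $f_i$ is bounded and uniformly continuous, Markov's inequality then lets me replace $d_{\Theta_N}^\circ$ by $R_{N,K}$ inside $\langle\mu_G,f_i\rangle$ up to an error vanishing after $N\to\infty$ and then $K\to\infty$.

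Next I would identify the limit of $\E_{\Theta_N}[f_i(R_{N,K})]$. Passing to a Skorokhod coupling on which the convergence in the second display of Lemma~\ref{lem 3.4} holds almost surely --- exactly as in the proof of Proposition~\ref{prop 3.1} --- gives $(b_{(j)},\{\sigma_N(j)\Theta_N/(2N)\})_{j\in[K]}\to(\varepsilon_j\Gamma_j^{-1/\alpha},U_j)_{j\in[K]}$, with the $(U_j)$-block produced by the $\Theta_N$-randomness via Weyl equidistribution and conditionally independent of the order-statistics block, while $b_0\to0$ in probability. The conditional dominated convergence theorem applied to the bounded continuous functional $(x_j,t_j)_{j\in[K]}\mapsto f_i(2\sum_{j<K}x_j\cos(2\pi t_j))$ then yields $\E_{\Theta_N}[f_i(R_{N,K})]\Rightarrow\E^\omega[f_i(2\sum_{j<K}\varepsilon_j\Gamma_j^{-1/\alpha}\cos(2\pi U_j))]$; since $\varepsilon_j\cos(2\pi U_j)\ed\cos(2\pi U_j)$ conditionally, the right-hand side is $\langle\nu_{\cc,K},f_i\rangle$ for the $K$-truncated series, and $\nu_{\cc,K}^\omega\to\nu_\cc^\omega$ weakly for a.e.\ $\omega$ because the $K$-truncated series converges a.s.\ by Kolmogorov's two-series theorem. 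A three-epsilon/diagonal argument over $K$ assembles these into the desired marginal convergence.

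The main obstacle is the tail control in the second paragraph: because the $a_j$ have infinite variance, $\E[\sum_{j\ge K}b_{(j)}^2]$ is useless and one must work at the level of convergence in distribution, relying on the fact that $c_N$ is calibrated precisely so that $c_N^{-2}\sum_{j<N}a_j^2$ has a finite a.s.\ limit --- the LePage series $\sum_j\Gamma_j^{-2/\alpha}$ --- and that deleting the $K$ largest squared entries leaves a genuinely small remainder. Everything else reduces to the LePage representation (Lemma~\ref{lem 3.3}) and Weyl's criterion (Lemmas~\ref{lem a1}--\ref{lem 3.4}) already established in Section~\ref{sec 3}; alternatively, one could run the method of moments on $\mu_G$ as in Proposition~\ref{prop 3.1}, since $\nu_\cc^\omega$ is moment-determined (compactly supported for $\alpha<1$, subgaussian for $\alpha\in[1,2)$).
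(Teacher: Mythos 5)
The paper does not prove Theorem~\ref{circular_lim_dist} at all: it is quoted verbatim from \cite[Theorem 3.3(b)]{Bos1}, so there is no in-paper proof to compare against. Your proposal is a plausible self-contained argument and, apart from one sign typo ($d^\circ_k = -b_0 + 2\sum_{j}b_{(j)}\cos(\cdot)$, not $+b_0$, which is harmless since $b_0\to 0$), the plan is sound.

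The argument you give is the natural circulant analogue of what the paper does for the Toeplitz case: the explicit spectrum $\mu_G=(2N)^{-1}\sum_k\delta_{d^\circ_k}$ plays the role that $P D^\circ P$ plays there, and the randomization by $\Theta_N$, the LePage/Weyl input from Lemmas~\ref{lem 3.3}--\ref{lem 3.4}, and the Parseval tail bound are the same toolkit. What the circulant setting buys you is that the whole projection/band-truncation machinery of Section~\ref{sec 4} collapses: there is no $P$ or $\Pi$, only a one-dimensional trigonometric sum, so the tail control reduces to $\E_{\Theta_N}[E_{N,K}^2]=2\sum_{j\ge K}b_{(j)}^2$ together with the distributional convergence $c_N^{-2}\sum_j a_j^2\Rightarrow\sum_j\Gamma_j^{-2/\alpha}$ (valid because $a_0^2$ lies in the $\alpha/2$-stable domain of attraction with $\alpha/2<1$, and $c_N^2$ is exactly the matching normalizer). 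That step is the one genuine new ingredient over Section~\ref{sec 3}, since the paper never needs the full LePage series of squares.

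Two things worth tightening if you were to write this out in full. First, the equivalence ``$\mu_G\Rrightarrow\nu_\cc$ iff all finite-dimensional marginals $(\langle\mu_G,f_1\rangle,\dots,\langle\mu_G,f_m\rangle)$ converge'' needs a tightness remark for laws on $\mathcal{M}$; here this is supplied for free by the very Parseval bound you invoke, since it shows $\int t^2\,d\mu_G(t)$ is stochastically bounded. Second, the tail error $C\sum_{j\ge K}b_{(j)}^2$ is a random variable, not a number, so the ``three-epsilon'' assembly is really an instance of the same scheme as Lemma~\ref{lem 2.4}: you need the $L^1$ (or in-probability) smallness of the L\'evy distance between the $K$-truncated and untruncated objects, uniformly in $N$ after $N\to\infty$, and the $\Theta_N$-Parseval bound plus the a.s.\ finiteness of $\sum_j\Gamma_j^{-2/\alpha}$ delivers exactly that. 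With those points spelled out, the proof is complete.
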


In Section \ref{sec 2.1} we showed that the matrix $T$ is the $N\times N$ principal submatrix of the circulant matrix $G$. By Cauchy's eigenvalue interlacing theorem, we have that for all $k\in[N]$, $$\lambda_k(G) \leq \lambda_k(T)\leq \lambda_{k+N}(G).$$ Let $f:\R_+\to \R_+$ be a bounded non-decreasing function and $f_\star ( \cdot): = f(|\cdot|)$. Then \begin{align}\label{thm 1.3 eqn 1}
\int f_\star d\mu_T \leq 2 \int f_\star d\mu_G. 
\end{align}
Indeed,
\begin{align*}
\int f_\star(t) \mu_T(dt) & = \frac{1}{N}\sum_{k=0}^{N-1}f(|\lambda_k(T)|) \leq \frac{1}{N}\sum_{k=0}^{N-1}f(\max(|\lambda_k(G)|, |\lambda_{k+N}(G)|)) \\
& \leq 2\cdot \frac{1}{2N}\sum_{k=0}^{2N-1}f(|\lambda_k(G)|) = 2\int f_\star(t) \mu_G(dt).
\end{align*}

From Theorems \ref{main_thm} and \ref{circular_lim_dist} respectively, we obtain that $\mu_T \Rrightarrow \nu_\ct$  and $\mu_G \Rrightarrow \nu_\cc$. Thus, by the Skorokhod representation theorem, there exist (separate) probability spaces on which $\mu_T \ed \mu_T'$, $ \nu_\ct \ed \nu_\ct'$, $\mu_T' \Rightarrow \nu_\ct'$ almost surely and  $\mu_G \ed \mu_G'$, $ \nu_\cc \ed \nu_\cc'$, $\mu_G' \Rightarrow \nu_\cc'$ almost surely. In particular, almost surely we have 
\begin{align} \label{thm 1.3 eqn 2}
\int f_\star d\mu_T' \to \int f_\star d\nu_\ct' \qquad \text{and}\qquad\int f_\star d\mu_G' \to \int f_\star d\nu_\cc'.
\end{align} 
Thus, from \eqref{thm 1.3 eqn 1} and \eqref{thm 1.3 eqn 2} we have that \begin{align}\label{thm 1.3 eqn 3}
\int f_\star d\nu_\ct \preccurlyeq 2\int f_\star d\nu_\cc,
\end{align} where $\preccurlyeq$ denotes (first-order) stochastic domination.

Fix $\beta>0$, let $f^R(t): = \max(e^{\beta t}, R)$ and let $f^\infty(t): = e^{\beta t}$ for $t>0$. Then, by \eqref{thm 1.3 eqn 3}, we have \begin{align}\label{thm 1.3 eqn 4}
\int f^R_\star d\nu_\ct \preccurlyeq 2\int f^R_\star d\nu_\cc, \quad \text{ for each } R>0.
\end{align}
By the monotone convergence theorem, we have that almost surely \begin{align}\label{thm 1.3 eqn 5}
\int f_\star^R d\nu_\ct \uparrow \int f_\star^\infty d\nu_\ct\qquad \text{and}\qquad\int f_\star^R d\nu_\cc \uparrow \int f_\star^\infty d\nu_\cc,
\end{align}
as $R \to \infty$. Thus, from \eqref{thm 1.3 eqn 4} and \eqref{thm 1.3 eqn 5} we obtain that $$\int f^\infty_\star d\nu_\ct \preccurlyeq 2\int f^\infty_\star d\nu_\cc.$$
Therefore, it suffices to show that $$\int f_\star^\infty d\nu_\cc \leq 2\exp\Bigl(2\beta^2\sum_{j=0}^\infty \Gamma_j^{-2/\alpha}\Bigr).$$ Note that \begin{align*}
\int f_\star^\infty d\nu_\cc  = \int e^{\beta |t|} \nu_\cc(dt)\leq \int e^{\beta t} \nu_\cc(dt) + \int e^{-\beta t} \nu_\cc(dt),
\end{align*}
so it suffices to upper bound each of the terms on the right of the above display. We do this for the first term; the proof for the second term just follows by replacing $\beta$ by $-\beta$.

For $j \geq 0$, let $\theta_j:=|\cos(2\pi U_j)|$ and $\eta_j: = \sgn( \cos(2\pi U_j))$. We note that the sequences $\eta = (\eta_j)_j$ and $\theta = (\theta_j)_j$ are independent of each other, with $(\eta_j)_j$ being i.i.d.\ Rademacher random variables. Denoting by $\E_\eta$ and $\E_\theta$ the expectations in those variables only, we have \begin{align*}
	\int e^{\beta t} \nu_\cc^\omega(dt) & = \E^\omega \exp\Bigl(2\beta\sum_{j=0}^\infty \Gamma_j^{-1/\alpha} \cos(2\pi U_j)\Bigr)  = \E_\theta\E_\eta \exp\Bigl(2\beta\sum_{j=0}^\infty \Gamma_j^{-1/\alpha} \eta_j\theta_j \Bigr)\\ 
	& \leq \E_\theta\liminf_{n\to\infty} \E_\eta \exp\Bigl(2\beta\sum_{j=0}^n \Gamma_j^{-1/\alpha} \eta_j\theta_j \Bigr) = \E_\theta \prod_{j=1}^\infty \cosh(2\beta\Gamma_j^{-1/\alpha}\theta_j),
\end{align*}
where the inequality follows from Fatou's lemma. By the inequality $\cosh(x) \le e^{x^2/2}, x \in \R$ and the fact that $\theta_j \leq 1$, the above is bounded by \begin{align*}
\E_\theta \prod_{j=0}^\infty \cosh(2\beta\Gamma_j^{-1/\alpha} \theta_j) \leq \exp\Bigl(2\beta^2\sum_{j=0}^\infty \Gamma_j^{-2/\alpha}\Bigr).
\end{align*}

\subsection{Proof of part (d)}
The proof involves two steps. First, we show that $\nu_{\ct}$ has unbounded support with positive probability. Then we boost that probability to one by appealing to a zero-one law that makes use of the ergodic nature of the operator $\Delta$.

\subsubsection{Unboundedness of the support of \texorpdfstring{$\nu_\ct$}{nu T} with positive probability}

To prove the first step, we begin by showing that the limiting spectral measure of the circulant matrix $G$ has unbounded support with probability one.
\begin{lemma}\label{lem circ_supp_unbdd}
For all $\omega \in \Omega_0$, the support of $\nu^\omega_\cc$ is unbounded.
\end{lemma}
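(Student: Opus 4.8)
The plan is to reduce the claim to a lower bound on tail probabilities. Fix $\omega \in \Omega_0 \subseteq \Omega_1$ and let $X = 2\sum_{j=0}^\infty \Gamma_j^{-1/\alpha}\cos(2\pi U_j)$, which is $\P^\omega$-a.s.\ well defined and has law $\nu_\cc^\omega$ by Theorem~\ref{circular_lim_dist}. It suffices to show that $\P^\omega(X > R) > 0$ for every $R > 0$: indeed, if $\supp(\nu_\cc^\omega)$ were contained in some bounded interval $[-M, M]$, then $\P^\omega(X > M) = 0$, a contradiction. The mechanism driving unboundedness is the contrast between the two relevant series. Since $\omega \in \Omega_1$ gives $\Gamma_j/j \to 1$ and since $1 \le \alpha < 2$, we have $\sum_j \Gamma_j^{-1/\alpha} = \infty$ (as $1/\alpha \le 1$) while $\sum_j \Gamma_j^{-2/\alpha} < \infty$ (as $2/\alpha > 1$), the latter being exactly the condition guaranteeing a.s.\ convergence of the series defining $X$.

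Given $R > 0$, I would first choose $n = n(R)$ so large that $\sum_{j=0}^{n-1}\Gamma_j^{-1/\alpha} > R$, which is possible by the divergence above. Write $X = S_n + T_n$, where $S_n = 2\sum_{j=0}^{n-1}\Gamma_j^{-1/\alpha}\cos(2\pi U_j)$ depends only on $U_0, \dots, U_{n-1}$ and $T_n = 2\sum_{j=n}^\infty \Gamma_j^{-1/\alpha}\cos(2\pi U_j)$ depends only on $(U_j)_{j \ge n}$; in particular $S_n$ and $T_n$ are independent under $\P^\omega$. On the event $A := \bigcap_{j=0}^{n-1}\{\cos(2\pi U_j) > 1/2\}$, which has $\P^\omega(A) = (1/3)^n > 0$ because $\{u \in [0,1] : \cos(2\pi u) > 1/2\}$ has Lebesgue measure $1/3$, one has $S_n > \sum_{j=0}^{n-1}\Gamma_j^{-1/\alpha} > R$. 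Finally, since $\cos(2\pi U_j) \ed -\cos(2\pi U_j)$ independently over $j$ and the series converges a.s., $T_n$ is symmetric about $0$, so $\P^\omega(T_n \ge 0) \ge 1/2$. Combining via independence,
\[
\P^\omega(X > R) \;\ge\; \P^\omega\big(A \cap \{T_n \ge 0\}\big) \;=\; \P^\omega(A)\,\P^\omega(T_n \ge 0) \;\ge\; \tfrac12 (1/3)^n \;>\; 0,
\]
which completes the proof.

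There is no substantial obstacle; the only points demanding a little care are the reduction of ``unbounded support'' to ``$\P^\omega(X > R) > 0$ for all $R$'', and the fact that $T_n$ is a bona fide symmetric random variable (a.s.\ convergence of the tail series) that is independent of $A$ (independence of the $U_j$). If one preferred to avoid the symmetry argument for $T_n$, one could instead bound $\P^\omega(T_n \ge -1)$ from below by Chebyshev's inequality, using $\mathrm{Var}^\omega(T_n) = 2\sum_{j \ge n}\Gamma_j^{-2/\alpha} \to 0$ and enlarging $n$ so that this variance is $< 1/2$; but invoking symmetry is cleaner and needs no moment estimate.
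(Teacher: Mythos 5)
Your proof is correct and takes essentially the same route as the paper: split $X$ into the first $n$ terms and the tail, use the divergence of $\sum_j \Gamma_j^{-1/\alpha}$ (valid since $\omega \in \Omega_1$ and $1 \le \alpha < 2$) to make the head exceed $R$ on an explicit positive-probability event, and use the symmetry of the tail together with independence of the $U_j$ to conclude $\nu_\cc^\omega((R,\infty)) > 0$. The only cosmetic difference is that you name the concrete event $\{\cos(2\pi U_j) > 1/2\text{ for }j<n\}$ and compute its probability $(1/3)^n$, whereas the paper simply asserts the positivity of the corresponding probability.
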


\begin{proof}
Since $\omega \in \Omega_0$ and $1 \leq \alpha < 2$, the sequence $(\Gamma_j)_{j \ge 0}$ satisfies $\sum_{j=0}^\infty \Gamma_j^{-1/\alpha} = \infty$. Therefore, given any $R>0$, we can choose $K = K(\omega)$ such that $\sum_{j=0}^{K-1} \Gamma_j^{-1/\alpha} \geq 2R$. This implies that
\begin{align}\label{eq:first_K-1}
\P^\omega\Bigl(\sum_{j=0}^{K - 1} \Gamma_j^{-1/\alpha}\cos(2\pi U_j) \geq R \Bigr) > 0.
\end{align}
By the symmetry of the distribution of $(\cos(2\pi U_j))_{j\geq K}$ about zero, we have
\begin{align}\label{eq:from_K}
\P^\omega\Bigl(\sum_{j=K}^\infty \Gamma_j^{-1/\alpha}\cos(2\pi U_j) \ge 0 \Bigr) = \frac{1}{2}.
\end{align}
Since $(U_j)_{j\geq 0}$ are independent, the events in \eqref{eq:first_K-1} and \eqref{eq:from_K} are independent. This yields 
\begin{align*}
\nu^\omega_\cc ([R, \infty)) &= \P^\omega\Bigl(\sum_{j=0}^\infty \Gamma_j^{-1/\alpha}\cos(2\pi U_j) \ge R \Bigr)\\
&\ge  \P^\omega \Bigl(\sum_{j=0}^{K-1} \Gamma_j^{-1/\alpha} \cos(2\pi U_j)  \geq R \Bigr) \P^\omega \Bigl(\sum_{j=K}^\infty \Gamma_j^{-1/\alpha} \cos(2\pi U_j) \ge 0 \Bigr) > 0,
\end{align*}
which implies that $\sup(\supp(\nu^\omega_\cc)) \geq R$ for each $R>0$, proving the lemma.
\end{proof}

On the probability space $(\Upsilon, \mathcal{G},\mathbf{P})$, define the event 
\begin{equation}\label{eq:E_tail}
\cE = \{\omega \in \Omega_0: \inf\{R>0:\nu_\ct^\omega([-R,R]) = 1\} = \infty \}.
\end{equation}
\begin{lemma} \label{lem positive_prob}
 $\mathbb{P}(\cE) >0$ when $1 \le \alpha < 2$.
\end{lemma}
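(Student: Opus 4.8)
\textit{Strategy.} The plan is to reduce the statement to the behaviour of the single scalar quantity $\langle u,\Delta^\omega u\rangle$, regarded as a function of $U=(U_j)_{j\ge0}$, and then to mimic the head/tail argument used in the proof of Lemma~\ref{lem circ_supp_unbdd}. The starting point is the observation that $\Pi u=\sqrt2\,\Pi^2e_0=u$, so that (using that $\Pi$ is self-adjoint, and that $u\in\D^\omega$ $\P^\omega$-a.s.\ by Proposition~\ref{prop 1.1}) one has $\langle u,\Delta^\omega u\rangle=\langle u,\Lambda^\omega u\rangle=\sum_{k\in\Z}w_k\,\varrho^\omega_k$, where $w_k:=|u_k|^2=2|\Pi(k,0)|^2\ge0$ and $\sum_k w_k=\|u\|_2^2=1$. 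Expanding each $\varrho_k^\omega$ and interchanging the order of summation, one gets, $\P^\omega$-a.s.,
\[
\langle u,\Delta^\omega u\rangle=2\sum_{j\ge0}\Gamma_j^{-1/\alpha}\,\widehat w(\zeta_j)\cos(2\pi U_j),\qquad \widehat w(\zeta):=\sum_{k\in\Z}w_k e^{2\pi i k\zeta}.
\]
This interchange is the only delicate point: for $1\le\alpha<2$ the series $\sum_j\Gamma_j^{-1/\alpha}$ diverges, so one cannot swap naively; instead one truncates $\varrho^\omega_k$ at level $j<n$ (where swapping a finite sum over $j$ with $\sum_k w_k(\cdot)$ is legitimate) and lets $n\to\infty$, the convergence of both sides holding in $L^2(\P^\omega)$ by exactly the martingale/Kolmogorov estimate used in the proof of Proposition~\ref{prop 1.1}(a).

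Since $(\Pi(k,0))_{k\in\Z}$ has Fourier transform $\1_{[0,1/2]}$, the sequence $(|\Pi(k,0)|^2)_{k\in\Z}$ has Fourier transform equal to the autocorrelation $\zeta\mapsto\big|[0,1/2]\cap([0,1/2]+\zeta)\big|$ of $\1_{[0,1/2]}$, which on $[0,1/2]$ equals $\tfrac12-\zeta$. Hence $\widehat w(\zeta)=1-2\zeta$ for $\zeta\in[0,1/2]$, a real and non-negative number, and therefore $\P^\omega$-a.s.
\[
\langle u,\Delta^\omega u\rangle=2\sum_{j\ge0}\Gamma_j^{-1/\alpha}(1-2\zeta_j)\cos(2\pi U_j).
\]

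Next I would transfer unboundedness of this random variable to unboundedness of the support of $\nu_\ct^\omega$. Because $u\in\D^\omega$, we have $\langle u,\Delta^\omega u\rangle=\int t\,\nu_{\Delta,u}(dt)\le\sup\supp(\nu_{\Delta,u})$, so on the event $\{\langle u,\Delta^\omega u\rangle>R\}$ one has $\nu_{\Delta,u}\big((R,\infty)\big)>0$. Using $\nu_\ct^\omega=\E^\omega\nu_{\Delta,u}$ and the independence of $U$ and $\omega$, it follows that for every $R>0$
\[
\nu_\ct^\omega\big((R,\infty)\big)\ \ge\ \int_{\{\langle u,\Delta^\omega u\rangle>R\}}\nu_{\Delta,u}\big((R,\infty)\big)\,d\P^\omega\ >\ 0\qquad\text{whenever }\P^\omega\big(\langle u,\Delta^\omega u\rangle>R\big)>0,
\]
and in that case $\nu_\ct^\omega([-R,R])\le1-\nu_\ct^\omega((R,\infty))<1$; if this holds for all $R>0$ then $\omega\in\cE$. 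So it remains to show that for $\omega$ in a set of positive $\P$-measure, $\P^\omega(\langle u,\Delta^\omega u\rangle>R)>0$ for every $R$; I would in fact establish this for $\P$-a.e.\ $\omega\in\Omega_0$. On $\Omega_1$ we have $\Gamma_j\sim j$, hence $\Gamma_j^{-1/\alpha}\sim j^{-1/\alpha}$ with $\sum_j j^{-1/\alpha}=\infty$ for $1\le\alpha<2$; since the weights $1-2\zeta_j$ are i.i.d.\ uniform on $[0,1]$ and the summands $\Gamma_j^{-1/\alpha}(1-2\zeta_j)$ are independent, non-negative and eventually $\le1$, with divergent total expectation, the series $S:=\sum_j\Gamma_j^{-1/\alpha}(1-2\zeta_j)$ is $\P$-a.s.\ infinite. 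Fix such an $\omega$ and $R>0$, and choose $J=J(\omega)$ with $\sum_{j<J}\Gamma_j^{-1/\alpha}(1-2\zeta_j)\ge2R$. On the positive-$\P^\omega$-probability event $\{\cos(2\pi U_j)\ge\tfrac12\ \text{for all }j<J\}$ the partial sum $2\sum_{j<J}\Gamma_j^{-1/\alpha}(1-2\zeta_j)\cos(2\pi U_j)$ is $\ge2R$; independently, the tail $2\sum_{j\ge J}\Gamma_j^{-1/\alpha}(1-2\zeta_j)\cos(2\pi U_j)$, which converges $\P^\omega$-a.s.\ because $\sum_j\Gamma_j^{-2/\alpha}<\infty$ and has a distribution symmetric about $0$, is $\ge0$ with probability at least $\tfrac12$. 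Hence $\P^\omega(\langle u,\Delta^\omega u\rangle\ge2R)>0$ for every $R$, which gives $\omega\in\cE$; therefore $\P(\cE)=1>0$.

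To summarize, the main obstacle is purely technical: justifying, for $1\le\alpha<2$, the interchange of summation that yields the closed form $\langle u,\Delta^\omega u\rangle=2\sum_j\Gamma_j^{-1/\alpha}(1-2\zeta_j)\cos(2\pi U_j)$ (handled by an $L^2$ limit of truncations, as in Proposition~\ref{prop 1.1}(a)); once this identity is available, both the transfer to the support of $\nu_\ct^\omega$ and the head/tail estimate are routine, the latter being a direct adaptation of the argument for Lemma~\ref{lem circ_supp_unbdd}.
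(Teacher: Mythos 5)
Your argument is correct but takes a genuinely different, and in a sense stronger, route than the paper. The paper proceeds by contradiction: using $\mu_{T_N}\Rrightarrow\nu_\ct$, $\mu_{S_N}\Rrightarrow\nu_\ct$, and Skorokhod's theorem, if $\P(\cE)=0$ one can find $R$ such that both $\liminf_N\mu_{T_N}([-R,R])=1$ and $\liminf_N\mu_{S_N}([-R,R])=1$ hold with probability at least $\tfrac12$; block-diagonalizing $G_{2N}$ via the Hadamard-type conjugation into $T_N\pm S_N$ and applying Weyl's inequalities then forces $\liminf_N\mu_{G_{2N}}([-2R,2R])=1$ on that event, contradicting Lemma~\ref{lem circ_supp_unbdd}. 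This only gives $\P(\cE)>0$, and the paper needs the Hewitt--Savage zero-one law (Lemma~\ref{lem 6.2}) to upgrade to $\P(\cE)=1$. You instead compute the first moment of $\nu_{\Delta,u}$: since $\Pi u=u$, $\langle u,\Delta^\omega u\rangle=\sum_k w_k\varrho_k^\omega$ with $w_k=2|\Pi(k,0)|^2$, and after the interchange (justified by truncating the $j$-sum and passing to the $L^2(\P^\omega)$ limit, exactly as you indicate) this equals $2\sum_j\Gamma_j^{-1/\alpha}\widehat w(\zeta_j)\cos(2\pi U_j)$; the computation $\widehat w(\zeta)=1-2\zeta$ on $[0,1/2]$ is correct, being twice the autocorrelation of $\1_{[0,1/2]}$, and its non-negativity is the structural fact that makes the head-tail estimate work just as in Lemma~\ref{lem circ_supp_unbdd}. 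The transfer from $\{\langle u,\Delta^\omega u\rangle>R\}$ to $\nu_\ct^\omega((R,\infty))>0$ is also correct, since for $u\in\D^\omega$ the first moment of $\nu_{\Delta,u}$ is $\langle u,\Delta^\omega u\rangle$. As a bonus, your proof yields $\P(\cE)=1$ directly and so renders the zero-one law step unnecessary.

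One small point should be tightened: the summands $\Gamma_j^{-1/\alpha}(1-2\zeta_j)$ are \emph{not} independent, since the $\Gamma_j$ are arrival times of a single Poisson process. However, conditionally on $(\Gamma_j)_{j\ge0}$ they are independent and non-negative with conditional means $\tfrac12\Gamma_j^{-1/\alpha}$, and on $\Omega_1$ these means are not summable (for $1\le\alpha<2$) while the terms are eventually bounded by $1$; Kolmogorov's three-series theorem applied conditionally then gives $S=\infty$ $\P$-a.s.\ conditionally, hence unconditionally. With this minor repair the proof stands.
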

\begin{proof}
We write down the circulant matrix $G = G_{2N}$ as \begin{align*}
G_{2N} = \begin{pmatrix}
T_N & S_N\\
S_N & T_N
\end{pmatrix}
\end{align*}
where $T_N$ and $S_N$ are symmetric Toeplitz matrices whose first rows are given by $(b_0, \ldots, b_{N-1})$ and $(b_N, \ldots, b_{2N-1}) = (b_N, b_{N-1}, \ldots, b_1)$ respectively. As remarked in Section \ref{sec 2.1}, we have the freedom to choose $b_N$, and for this lemma, we shall assume it to be an independent copy of $b_0$ so that $T_N \ed S_N$.

Following \cite{Fer}, we define $H_{2N}$ to be the unitary matrix given by \begin{align*}
H_{2N} = \frac{1}{\sqrt{2}}\begin{pmatrix}
I_N & I_N\\
I_N & -I_N
\end{pmatrix}
\end{align*} and note that \begin{align} \label{eq:G_N_rep}
H_{2N}^*G_{2N}H_{2N} = \begin{pmatrix}
T_N + S_N & 0_N\\
0_N & T_N - S_N
\end{pmatrix}. 
\end{align}
By Theorem \ref{main_thm}, both $\mu_{T_N} \Rrightarrow \nu_\ct$ and $\mu_{S_N} \Rrightarrow \nu_\ct$. So, using the Skorokhod representation theorem, we can construct two probability spaces on which $\mu_{1,N} \Rightarrow \nu_1$  and $\mu_{2,N} \Rightarrow \nu_2$ almost surely where $\mu_{1,N} \ed \mu_{T_N},$  $ \mu_{2,N}  \ed \mu_{S_N}$ and $\nu_1 \ed \nu_2 \ed \nu_\ct $. With a slight abuse of notation, we will denote a generic element in both probability spaces by $\varpi$.

 Let $R_j(\varpi) = \inf\{R>0:\nu_j^\varpi([-R,R]) = 1\}$ for $j = 1,2$. Let us assume, if possible, that $\mathbb{P}(\cE)=0$. Then we can find $R>0$ such that 
\begin{align*}
    \P (R_1(\varpi) < R) = \P(R_2(\varpi) < R) \geq 3/4.
\end{align*}
If for a fixed $\varpi$,  $\mu^{\varpi}_{1,N} \Rightarrow \nu^{\varpi} $, then  we have $\liminf_N \mu^{\varpi}_{1,N} ((-R, R)) \ge \nu^{\varpi}_1((-R,R))$. Further, if $R_1(\varpi) < R$, then $\nu^{\varpi}_1((-R,R)) =1$. 
Therefore, 
\begin{align*}
\mathbb{P}(\liminf_N \mu^{\varpi}_{1, N} ((-R, R)) = 1) \ge \P (R_1(\varpi) < R) \ge  3/4.
\end{align*}
Since $\mu_{1, N}$ and $\mu_{T_N}$ share the same law, we also have
\begin{align*}
\mathbb{P}(\liminf_N \mu_{T_N} ((-R, R)) = 1) \ge 3/4,
\end{align*}
and the same holds true for $\mu_{S_N}$. So, if we define the event
\[\cE_0 = \big \{\liminf_N \mu_{T_N} ([-R, R]) = 1 \big\} \cap \big\{  \liminf_N \mu_{S_N} ([-R, R]) = 1 \big \},\] then by a union bound,
\begin{equation}\label{eq:TandS_bound}
\mathbb{P}(\cE_0) \ge 1/2.
\end{equation}
If $\cE_0$ holds, it then follows from the Weyl's inequalities on the eigenvalues of the matrices $T_N + S_N$ and $T_N - S_N$ that 
\[ \liminf_N \mu_{T_N+S_N} ([-2R, 2R]) = 1 \qquad \text{and} \qquad \liminf_N \mu_{T_N - S_N} ([-2R, 2R]) = 1.\]
Consequently, it follows from \eqref{eq:G_N_rep} and \eqref{eq:TandS_bound} that 
\[ \mathbb{P} ( \liminf_N \mu_{G_{2N}} ([-2R, 2R]) = 1 ) \ge \mathbb{P}(\cE_0) \ge 1/2.\]
However, since $\mu_{G_{2N}} \Rrightarrow \nu_\cc$ (Theorem \ref{circular_lim_dist}), we obtain a contradiction to the fact that the support of $\nu_\cc$ is unbounded with probability one (Lemma \ref{lem circ_supp_unbdd}). This implies that $\mathbb{P}(\cE) >0,$ as claimed.
\end{proof}

\subsubsection{Almost sure unboundedness of the support of \texorpdfstring{$\nu_\ct$}{nu T}}
We  extend the definition of $\nu_\ct^\omega$ to the entire sample space by setting $\nu^\omega_\ct = \delta_0$ for $\omega \not \in \Omega_0$. Recall that for each $\omega \in \Omega_0$, the operator $\Delta^\omega$ is ergodic (Lemma \ref{lem:ergodic}). It follows (see \cite[Proposition 5.12]{Kir}) that for each $\omega \in \Omega_0$, 
\begin{equation} \label{eq:ergodic_support}
\mathrm{supp}(\nu_\ct^\omega) = \sigma(\Delta^\omega).
\end{equation}

\begin{lemma}\label{lem 6.2}
Let $(E_j)_{j\geq 0}$ be i.i.d.\ $\mathrm{Exp}(1)$ random variables  on $(\Upsilon, \mathcal{G},\mathbf{P})$ such that  $\Gamma_j  =  E_0 + \ldots + E_j$ for each $j$. Then, the event $\cE$, defined in \eqref{eq:E_tail}, is measurable with respect to the exchangeable  $\sigma$-algebra generated by the i.i.d.\ random vectors $(E_j, \zeta_j)_{j\geq 0}$ on $(\Upsilon, \mathcal{G},\mathbf{P})$.
\end{lemma}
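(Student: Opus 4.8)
The plan is to show that the event $\cE$ is invariant under every finitely supported permutation of the sequence $(E_j,\zeta_j)_{j\ge 0}$; by the definition of the exchangeable $\sigma$-algebra, this is exactly what is required. Since any finite permutation is a composition of transpositions, it suffices to treat a single transposition $\pi=(m\ n)$ with $m<n$. Write $\omega'=\pi\omega$ for the environment obtained by replacing $(E_j,\zeta_j)$ with $(E_{\pi(j)},\zeta_{\pi(j)})$, and $\Gamma'_j,\zeta'_j,\varrho^{\omega'}_k$ for the corresponding quantities (note $\varrho^{\omega'}_k$ is still formed with the same $U_j$, which are not permuted). The first observation is that $(\Gamma'_j,\zeta'_j)=(\Gamma_j,\zeta_j)$ for every $j\notin\{m,m+1,\dots,n\}$, because the partial sums $\Gamma_j=\sum_{i\le j}E_i$ and the marks are untouched outside this range; only the finitely many $\Gamma_m,\dots,\Gamma_{n-1}$ are shifted by $E_n-E_m$ and $\zeta_m,\zeta_n$ are swapped. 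In particular $\Omega_0$ is invariant under $\pi$: $\Omega_1$ depends only on the tail of $(\Gamma_j/j)_j$, which is unchanged for $j>n$, and $\Omega_3$ is a reordering-invariant property of $(\zeta_j)_j$. Since moreover $\nu^\omega_\ct=\delta_0$ off $\Omega_0$, it is enough to prove that for $\mathbf{P}$-a.e.\ $\omega\in\Omega_0$ the supports of $\nu^\omega_\ct$ and $\nu^{\omega'}_\ct$ are simultaneously bounded or unbounded.

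Next I would invoke the ergodicity of $\Delta^\omega$. By \eqref{eq:ergodic_support} and Lemma~\ref{lem:ergodic}, for $\omega\in\Omega_0$ we have $\supp(\nu^\omega_\ct)=\Sigma^\omega$ and $\sigma(\Delta^\omega(U))=\Sigma^\omega$ for $\mathbf{P}^\omega$-a.e.\ $U$, and likewise for $\omega'$; fix a realization of $U$ good for both (still a full-measure set) at which $\Delta^\omega(U)$ and $\Delta^{\omega'}(U)$ are self-adjoint (Proposition~\ref{prop 1.1}). The key estimate is that these two operators differ by a \emph{bounded} one. Indeed, the terms with $j\notin\{m,\dots,n\}$ cancel, so
\[
\varrho^{\omega'}_k(U)-\varrho^{\omega}_k(U)=2\sum_{j=m}^{n}\Bigl[(\Gamma'_j)^{-1/\alpha}\cos\bigl(2\pi(U_j+k\zeta'_j)\bigr)-\Gamma_j^{-1/\alpha}\cos\bigl(2\pi(U_j+k\zeta_j)\bigr)\Bigr],
\]
a finite sum whose $j$-th term is at most $(\Gamma'_j)^{-1/\alpha}+\Gamma_j^{-1/\alpha}$ in absolute value, uniformly in $k$. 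Since $\Gamma_j,\Gamma'_j>0$ almost surely, $\sup_k\bigl|\varrho^{\omega'}_k(U)-\varrho^{\omega}_k(U)\bigr|\le C(\omega)<\infty$, so $B:=\Lambda^{\omega'}(U)-\Lambda^\omega(U)$ is a bounded diagonal operator and $\Delta^{\omega'}(U)=\Delta^\omega(U)+\Pi B\Pi$ with $\|\Pi B\Pi\|_\op\le C(\omega)$. In particular $\Delta^\omega(U)$ and $\Delta^{\omega'}(U)$ have the same domain, and $\Delta^{\omega'}(U)$ is a bounded self-adjoint perturbation of $\Delta^\omega(U)$.

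Finally, I would apply the standard stability of the spectrum of a self-adjoint operator under a bounded self-adjoint perturbation, $\sigma(A+B)\subseteq\{x:\mathrm{dist}(x,\sigma(A))\le\|B\|_\op\}$, in both directions: $\sigma(\Delta^\omega(U))$ is bounded if and only if $\sigma(\Delta^{\omega'}(U))$ is. Translating back through $\supp(\nu^\omega_\ct)=\Sigma^\omega=\sigma(\Delta^\omega(U))$ (and the analogue for $\omega'$), this gives $\omega\in\cE\iff\omega'\in\cE$. Composing transpositions yields invariance of $\cE$ under all finite permutations of $(E_j,\zeta_j)_{j\ge0}$, establishing the lemma. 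The step I expect to require the most care is the clean handling of the unbounded self-adjoint operators $\Delta^\omega(U)$, $\Delta^{\omega'}(U)$ — verifying that the bounded perturbation $\Pi B\Pi$ genuinely identifies their domains and that the resolvent bound applies — together with the (routine but not vacuous) check that one realization of $U$ can be chosen good simultaneously for $\omega$ and its finitely many permutations; the cancellation of the tail terms, the $\mathbf{P}$-a.s.\ finiteness of $C(\omega)$, and the $\pi$-invariance of $\Omega_0$ are all straightforward.
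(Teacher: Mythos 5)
Your proposal is correct and follows essentially the same route as the paper's proof: both identify $\supp(\nu_\ct^\omega)$ with the nonrandom spectrum $\Sigma^\omega = \sigma(\Delta^\omega)$ via ergodicity/Pastur, both observe that a finitely supported permutation changes $\Delta^\omega$ by a bounded self-adjoint perturbation with operator norm at most $2\sum_{j\le n}(\Gamma_j^{-1/\alpha}+(\Gamma_j^\tau)^{-1/\alpha})$, and both conclude via the stability of the spectrum under bounded perturbations. The only cosmetic differences are that you reduce to transpositions where the paper treats a general finite permutation in one step (the paper's bound works just as well there), and that you fix a realization of $U$ good for both $\omega$ and $\omega'$ where the paper simply writes $\E^\omega\,\mathrm{dist}(\sigma(\Delta^\omega),\sigma(\Delta^{\omega,\tau}))$ -- trivially equal to $\mathrm{dist}(\Sigma^\omega,\Sigma^{\omega,\tau})$ since the spectra are $\bP^\omega$-a.s.\ constant.
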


\begin{proof}
Let $\tau: \Z_+ \to \Z_+$ be a bijection that keeps all but finitely many indices fixed. Let $n = \max\{ j \ge 0: \tau(j) \ne j\}$, which is finite by our assumption. Denote the sum $ E_{\tau(0)} + E_{\tau(1)}+ \ldots + E_{\tau(j)} $ by $\Gamma^\tau_j$. Obviously, $\Gamma^\tau_j  = \Gamma_j$ and $\zeta_{\tau(j)} = \zeta_j$ for all $j > n$.  Let  $\varrho_k^{\omega, \tau}$ be the obtained from $\varrho_k^\omega$ by replacing $\omega = (\Gamma_j, \zeta_j)_{j\geq 0}$ with $\omega^\tau: = (\Gamma_j^\tau, \zeta_{\tau(j)})_{j\geq 0}$ while keeping the $U = (U_j)_{j\geq 0}$ unchanged. Similarly, we define $\Delta^{\omega, \tau}$ and $\nu_\ct^{\omega, \tau}$. Now by \eqref{eq:ergodic_support}, for all $\omega \in \Omega_0$, we have 
\begin{align}
	\mathrm{dist}( \mathrm{supp}(\nu_\ct^\omega), \mathrm{supp}(\nu_\ct^{\omega, \tau}) )  &=  \E^\omega	\mathrm{dist}(\sigma(\Delta^\omega)) , \sigma(\Delta^{\omega, \tau})) \nonumber \le  \E^\omega \|\Delta^\omega - \Delta^{\omega, \tau}\|_{\op}
\end{align}
where the inequality above can be found in \cite[(A.14)]{Aiz}. Since $\|\Pi\|_\op = 1$, we can further bound the RHS from above as 
\[ \E^\omega \|\Delta^\omega - \Delta^{\omega, \tau}\|_{\op} \leq \sup_{ U } \sup_{ k \in \Z} | \varrho^\omega_k  -   \varrho^{\omega, \tau}_k | \le 2  \sum_{j=0}^n \big (  \Gamma_j^{-1/\alpha} + (\Gamma_j^\tau)^{-1/\alpha} \big ) < \infty.\]
Note that if $\omega \not \in \Omega_0$ then $\omega^\tau \not \in \Omega_0$. In this case, $\mathrm{supp}(\nu_\ct^\omega) =  \mathrm{supp}(\nu_\ct^{\omega, \tau}) = \{0\}$ trivially. Therefore, we have shown that $\mathrm{dist}( \mathrm{supp}(\nu_\ct^\omega), \mathrm{supp}(\nu_\ct^{\omega,\tau}) )  < \infty$ for all realizations of $\omega$. The lemma then follows immediately. 
\end{proof}

From the above lemma and the Hewitt-Savage zero-one law we have that $\P(\cE) \in \{0,1\}$. It follows from Lemma \ref{lem positive_prob} that $\P(\cE) = 1.$


\bibliographystyle{siam}
\bibliography{htt_ref1}
\end{document}